\numberwithin{equation}{section}
\numberwithin{figure}{section}
\theoremstyle{plain}
\newtheorem{thm}{\protect\theoremname}
\theoremstyle{definition}
\newtheorem{defn}[thm]{\protect\definitionname}
\theoremstyle{remark}
\newtheorem{rem}[thm]{\protect\remarkname}
\theoremstyle{plain}
\newtheorem{lem}[thm]{\protect\lemmaname}
\theoremstyle{plain}
\newtheorem{prop}[thm]{\protect\propositionname}
\theoremstyle{plain}
\newtheorem{cor}[thm]{\protect\corollaryname}
\theoremstyle{plain}
\providecommand{\corollaryname}{Corollary}
\providecommand{\definitionname}{Definition}
\providecommand{\lemmaname}{Lemma}
\providecommand{\propositionname}{Proposition}
\providecommand{\remarkname}{Remark}
\providecommand{\theoremname}{Theorem}
\providecommand{\examplename}{Example}
\newcommand{\blue}[1]{{\color{blue} #1}}
\newcommand{\cB}{\mathcal{B}}
\newcommand{\cD}{\mathcal{D}}
\newcommand{\cF}{\mathcal{F}}
\newcommand{\cP}{\mathcal{P}}
\newcommand{\cS}{\mathcal{S}}
\newcommand{\EE}{\mathbb{E}}
\newcommand{\NN}{\mathbb{N}}
\newcommand{\PP}{\mathbb{P}}
\newcommand{\RR}{\mathbb{R}}
\newcommand{\mathd}{\mathrm{d}}
\newcommand{\vertiii}[1]{{\left\vert\kern-0.25ex\left\vert\kern-0.25ex\left\vert #1 
    \right\vert\kern-0.25ex\right\vert\kern-0.25ex\right\vert}}
\newcommand{\eps}{\varepsilon}
\newcommand{\dd}{\mathop{}\!\mathrm{d}}
\title[Regularization of multiplicative SDEs  through additive noise]{Regularization of multiplicative SDEs\\ through additive noise}
\author{ Lucio Galeati \and Fabian A. Harang}
\keywords{stochastic differential equations, regularization by noise, multiplicative noise, Young integration, rough path theory}
\thanks{\emph{ AMS 2010 Mathematics Subject Classification: } Primary: 34F05, 60H10; Secondary: 37H10 
\\
\emph{Acknowledgments}: F. Harang gratefully acknowledges financial support from the STORM project 274410, funded by the Research Council of Norway.}
\address{Lucio Galeati: {\rm email: lucio.galeati@iam.uni-bonn.de},
address: Institute of Applied Mathematics, University of Bonn, 53115 Endenicher Allee 60, Bonn, Germany.
}
\address{Fabian A. Harang: {\rm email: fabianah@math.uio.no},
address:  Department of Mathematics, University of Oslo, P.O. box 1053, Blindern, 0316, OSLO, Norway}
\begin{document}

\maketitle
\begin{abstract}
    We investigate the regularizing effect of certain additive continuous perturbations on SDEs with multiplicative fractional Brownian motion (fBm). Traditionally, a Lipschitz requirement on the drift and diffusion coefficients is imposed to ensure existence and uniqueness of the SDE. We show that suitable perturbations restore existence, uniqueness and regularity of the flow for the resulting equation, even when both the drift and the diffusion coefficients are distributional, thus extending the program of regularization by noise to the case of multiplicative SDEs. 
    Our method relies on a combination of the non-linear Young formalism developed by Catellier and Gubinelli \cite{Catellier2016}, and stochastic averaging estimates recently obtained by Hairer and Li \cite{hairer2019averaging}. 
\end{abstract}
{
\hypersetup{linkcolor=black}
 \tableofcontents 
}

\section{Introduction}
In this paper we deal with multidimensional stochastic differential equations of the form
\begin{equation}\label{intro general SDE}
    \dd x_t = b_1(t,x_t)\dd t+b_2(t,x_t)\dd \beta_t+\dd w_t,\qquad x_0\in \RR^d,
\end{equation}
where $\beta$ is a fractional Brownian motion with Hurst parameter $H>1/2$ and $w$ is a deterministic continuous path. Specifically, we are interested in understanding how the additive perturbation affects the SDE, by identifying \textit{analytic} conditions on $w$ which ensure wellposedness for \eqref{intro general SDE} even when  it fails  for $w\equiv 0$, in the style of \textit{regularisation by noise} phenomena.

Let us first provide a short account of the main known results for \eqref{intro general SDE} with $w\equiv 0$.
Since $H>1/2$, the SDE is pathwise meaningful either in the sense of Young integrals or fractional calculus; for $b_1$ and $b_2$ sufficiently smooth, existence of a unique solution is classical, see e.g. \cite{rascanu2002differential,Friz2014}, as well as \cite[Appendix D]{biagini2008stochastic} for a general survey.
Sharp  conditions for wellposedness, in the form of Osgood-type regularity for $b_1$ and $b_2$, are given in \cite{YongPeiJiangLun2017}, generalizing to the case $H>1/2$ the results from \cite{yamada1981successive,taniguchi1992successive} for $H=1/2$; this includes the case of $b_1$ and $b_2$ Lipschitz.
If $d=1$ and $b_2\equiv 1$, the authors in \cite{NUALART2002103} establish pathwise uniqueness for $b_1$ satisfying suitable H\"older regularity\blue{. T}his result can be extended to a broader class of non-degenerate diffusion coefficients $b_2$ by means of a Doss-Sussman transformation, in the style of \cite{athreya2017smoothness}. Recently, \cite{hinz2020variability} investigated the case $b_1\equiv 0$ and $b_2$ non-degenerate of bounded variation; however, the conditions included therein for wellposedness are fairly specific and require verification for each choice of $b_2$.\\

None of the results mentioned above includes the case of general H\"older continuous diffusion $b_2$ and smooth drift $b_1$. This is not due to technical limitations of the proofs; in fact, uniqueness does in general not  hold.
To see this, let $d=1$ and consider $y$ solution to the ODE $\dot y_t = f(y_t)$ with $y_0=0$, and define the process $x_t:=y(\beta_t)$. Under the assumption that $f$ is $\alpha$-H\"older with $H(1+\alpha)>1$, Young chain rule shows that $x$ satisfies the SDE
\[
\mathd x_t 
= f(x_t) \mathd\beta_t, \quad x_0=0.
\]

As a consequence, to any solution of the ODE we can associate a solution of the SDE; if uniqueness fails for the first, it will also fail for latter. For instance we can take
\[
f(z)=\frac{1}{1-\alpha}\,|z|^\alpha,\quad y^1_t= 0,\quad y^2_t = t^{\frac{1}{1-\alpha}},
\]
which implies that $x^1_t=0$ and $x^2_t=(\beta_t)^{1/(1-\alpha)}$ are two different solutions starting from $0$ to the same SDE; the above procedure actually allows to construct infinitely many of them.\\

Therefore the wellposedness theory for SDEs driven by fBm with $H>1/2$ can not be better than the one for classical ODEs.
At the same time, since existence of solutions is granted by compactness arguments under mild regularity assumptions on $b_1$ and $b_2$, it is reasonable to ask whether, among the many mathematical solutions, some are more meaningful than others. If the SDE models a physically observed phenomenon, then its solutions intuitively should be stable under very small perturbations. In this sense, establishing uniqueness for \eqref{intro general SDE} with very small, nontrivial $w$, can be seen as the first step in this context of the more general program on vanishing noise selection of solutions outlined in~\cite{Flandoli2011}.\\

Investigations on wellposedness of the SDE \eqref{intro general SDE} with $w$ sampled as a stochastic process date back to the pioneering work of Zvonkin \cite{Zvonkin1974} and the literature on the topic has grown extensively, see e.g. \cite{Veretennikov1981,Krylov2005,flandoli2010well,mohammed2015,beck2019stochastic} and the review \cite{Flandoli2011}.
However, to the best of our knowledge, only the case $b_2 \equiv 0$ has been treated so far; the presence of a diffusion term, combined with the fact that in the regime $H>1/2$ many classical probabilistic  tools (martingale problems, Markov processes and generators) are not available, creates new difficulties and different sets of idea must be introduced.\\

Our approach to the problem
follows the ideas introduced in \cite{Catellier2016}, where analytic conditions on $w$ which imply well-posedness for \eqref{intro general SDE} with $b_2\equiv 0$ and possibly distributional drift $b_1$ are identified. In recent years, this analytic approach to regularization by noise phenomena has been considerably expanded, see~\cite{galeati2020noiseless,harang2020cinfinity,harang2020regularity}.

From now on, in order not to hinder the main contributions of this work with technical details, we will focus for simplicity on the addtively perturbed SDE (in integral form)
\begin{equation}\label{eq:short SDE}
    x_t =x_0+\int_0^t b(x_s)\dd \beta_s + w_t
\end{equation}
namely with $b_1\equiv 0$ and $b_2$ not depending on time, but being possibly distributional. Indeed \eqref{eq:short SDE} presents the same main difficulties and, once they are properly understood, generalising the results to \eqref{intro general SDE} is almost straightforward, as will be shown in Section \ref{sec: further extensions}.\\

Our main strategy is based on readapting the non-linear Young formalism introduced in \cite{Catellier2016} in this setting. Given a solution $x$ to \eqref{eq:short SDE}, $\theta:=x-w$ formally solves
\begin{equation}\label{eq:formal young}
    \theta_t=\theta_0+\int_0^t b(\theta_s+w_s)\dd \beta_s.
\end{equation}
If both $b$ and $w$ are sufficiently regular, then equation \eqref{eq:formal young} can be reinterpreted as a nonlinear Young differential equation (nonlinear YDE for short) of the form
\begin{equation}\label{eq:formal young 2}
\theta_t =\theta_0 + \int_0^t \Gamma^w b(\dd s,\theta_s),
\end{equation}
where we denote by $\Gamma^w b$ the \textit{multiplicative averaged field}, formally defined as
\begin{equation}\label{Gamma intro}
\Gamma^w b(t,y)=\int_0^t b(y+w_r)\dd \beta_r,\qquad t\in [0,T], \, y\in \RR^d.
\end{equation}
It plays in this context the same role as the \textit{classical averaged field} $T^w b$ from \cite{Catellier2016}, given by
\begin{equation*}
T^w b(t,y)=\int_0^t b(y+w_r)\dd r,\qquad t\in [0,T], \, y\in \RR^d. 
\end{equation*}
We can then \textit{define} $x$ to be a solution to \eqref{eq:short SDE} by imposing the ansatz $x=w+\theta$, with $\theta$ solution to \eqref{eq:formal young 2}; in this way we can give meaning to \eqref{eq:short SDE} for less regular choices of $b$ and $w$, assuming we are able to prove the required regularity for $\Gamma^w b$. Existence and uniqueness of $x$ then reduces to that of $\theta$, which in turn follows from the abstract theory of non-linear YDEs (see Section \ref{sec:Non-linear Young integration and equations} for a recap) applied to the random field $\Gamma^w b$.\\

There are however some major problems in achieving the program outlined above, compared to the case of perturbed ODEs treated in \cite{Catellier2016}. Indeed, the classical averaged field $T^w b$ is by now a well understood object, which is always analytically well defined as a distribution. Moreover, many stochastic estimates are available for $T^w b$ when $w$ is sampled as suitable stochastic processes, see Section \ref{sec2.1} for an overview.
In contrast, in order to define the integral appearing in \eqref{Gamma intro} as a Young integral, we need at least to require $w$ to be $\delta$-H\"older continuous with $H+\delta>1$; without this assumption, it is unclear how to interpret neither \eqref{eq:short SDE} nor \eqref{Gamma intro}, even when $b$ is a smooth function.
At the same time, it is now clear from \cite{Catellier2016,galeati2020noiseless,harang2020cinfinity} that a strong regularisation effect is expected to hold for especially rough $w$, i.e. for very small values of $\delta$, thus making the requirement $H+\delta>1$ too restrictive.

In order to overcome this difficulty, we must invoke recently developed stochastic estimates by Hairer and Li \cite{hairer2019averaging}, regarding Wiener integrals of the form
\[
\int_0^t f_s \dd \beta_s
\]
with $\beta$ fBm with $H>1/2$ and $f:[0,T]\to\mathbb{R}$ possibly distributional.
Remarkably, this not only allows to define $\Gamma^w b$ as a random field, but also relates its space-time H\"older regularity to that of $T^w b$, with no restrictions on the value $\delta\in (0,1)$. With this tool at hand, we can then apply the already existing results for $T^w b$ in order to define $\Gamma^w b$ and solve the associated equation \eqref{eq:formal young 2}.

Our approach presents several nice features: it identifies sufficient analytic conditions for $w$ to regularise the SDE, in the form of regularity requirements for $T^w b$; it provides a pathwise solution concept for \eqref{eq:short SDE} in terms of equation \eqref{eq:formal young 2}, which should be regarded as a random nonlinear YDE rather than an SDE; no adaptedness requirements are needed to guarantee uniqueness; finally, the existence of an associated Lipschitz flow is a direct consequence of the nonlinear YDE theory.

\subsection{Main results}\label{sec: main results}

In all the next statements, whenever referring to a fractional Brownian motion $\beta$ of parameter $H$, we will consider it to be the canonical process on $(\Omega,\mathcal{F},\mu^H)$, where $\Omega=C([0,T];\RR^m)$, $\mu^H$ is the fBm law on $\Omega$ and $\mathcal{F}$ is the completion of the $\mathcal{B}(C([0,T];\RR^m))$ w.r.t. $\mu^H$; the process $\beta=\{\beta_t\}_{t\in[0,T]}$ is given by $\beta_t (\omega)=\omega(t)$. However, as will be discussed, the concept of path-by-path wellposedness only depends on the law $\mu^H$, therefore the results automatically carry over to any other probability space $(\Omega,\mathcal{F},\PP)$ on which an fBm of parameter $H>1/2$ is defined. We will frequently refer to the averaged fields $T^wb$ and $\Gamma^wb$, formally given above and rigorously defined in Sections \ref{sec:prelim on nonlinear integration and avg fields} and \ref{sec: avg fields w multiplicative noise} respectively. \\

The following statement summarizes our main findings.

\begin{thm}\label{main thm1}
Let $H\in (1/2,1)$, $b\in \cD(\RR^d)$ and $w$ a deterministic path such that
\begin{equation}\label{main thm1 condition}
   T^w b\in C^\gamma_t C^2_x \text{ for some } \gamma\in \left( \frac{3}{2}-H,1\right); 
\end{equation}
then path-by-path wellposedness holds for the SDE
\begin{equation*}
    \mathd x_t = b(x_t)\mathd \beta_t +\mathd w_t.
\end{equation*}
In particular, for any $x_0\in\mathbb{R}^d$, any two pathwise solutions defined on $(\Omega,\mathcal{F},\mathbb{P})$ starting from $x_0$  are indistinguishable. Moreover, solutions are adapted to the filtration generated by $\beta$ and they form a random $C^1_{x,loc}$ flow; specifically, the unique solution starting at $x_0$ is given by
\begin{equation}\label{eq: form solution}
    x_t(\omega) = w_t + \mathcal{I}(\Gamma^w b(\omega))(t,x_0-w_0)
\end{equation}
where $\mathcal{I}(\Gamma^w b)$ is another random $C^1_{x,loc}$ flow.
\end{thm}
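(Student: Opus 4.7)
The plan is to reduce the SDE to a nonlinear Young differential equation driven by the multiplicative averaged field $\Gamma^w b$, and to apply the abstract well-posedness results of Section \ref{sec:Non-linear Young integration and equations} pathwise on a set of full $\mu^H$-measure.

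\textbf{Step 1 (Regularity transfer).} I would first invoke the stochastic estimates of Section \ref{sec: avg fields w multiplicative noise}, based on the Hairer--Li bounds from \cite{hairer2019averaging}: they say that $T^w b \in C^\gamma_t C^2_x$ implies $\Gamma^w b \in C^{\gamma'}_t C^2_x$ almost surely, for any $\gamma' < \gamma + H - 1$. Morally, $\partial_t T^w b = b(\,\cdot\, + w_t)$ is a time-distribution of regularity $\gamma - 1$, and Wiener integration against fBm of Hurst parameter $H$ gains $H$ in regularity. The assumption $\gamma > 3/2 - H$ is calibrated precisely so that $\gamma'$ can be chosen in $(1/2,1)$, which is the threshold demanded by the nonlinear YDE theory.

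\textbf{Step 2 (Equivalence of formulations).} Next, on the full-measure set $\Omega_0$ on which $\Gamma^w b$ has the above regularity, I would verify that the ansatz $x = w + \theta$ reduces the SDE \eqref{eq:short SDE} to the nonlinear YDE
\[
\theta_t = \theta_0 + \int_0^t \Gamma^w b(\mathrm{d} s, \theta_s), \qquad \theta_0 = x_0 - w_0,
\]
through the key identity $\int_0^t b(\theta_s + w_s)\,\mathrm{d}\beta_s = \int_0^t \Gamma^w b(\mathrm{d} s, \theta_s)$. For piecewise constant $\theta$ both sides are finite linear combinations of Wiener increments of the form $b(y_i + w_{\cdot})\cdot (\beta_{t_{i+1}} - \beta_{t_i})$, and the identity is tautological; for general $\theta$ the identity follows by approximation, using continuity of the nonlinear Young integral in its integrand and the sewing-type estimates recalled in Section \ref{sec:Non-linear Young integration and equations}.

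\textbf{Step 3 (Abstract YDE theory and conclusion).} On $\Omega_0$, the abstract nonlinear YDE machinery applied to the driver $A = \Gamma^w b(\omega)$ yields a unique global solution $\theta = \mathcal{I}(\Gamma^w b)(\cdot,\theta_0)$ for every initial condition $\theta_0 \in \RR^d$, together with a $C^1_{x,loc}$ flow. Translating back through $x = w + \theta$ delivers \eqref{eq: form solution}, the $C^1_{x,loc}$ flow property and path-by-path uniqueness. Adaptedness to the filtration generated by $\beta$ follows because $\Gamma^w b(t,\cdot)$ is $\mathcal{F}_t^\beta$-measurable (being a Wiener integral on $[0,t]$) and the YDE solution map $\mathcal{I}$ is continuous in the driver.

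\textbf{Main obstacle.} The most delicate point is Step 2: rigorously bridging between the Wiener integral $\int_0^t b(\theta_s + w_s)\,\mathrm{d}\beta_s$ and its nonlinear Young counterpart, since in the truly rough regime for $w$ the former has no classical meaning and must in fact be \emph{defined} through the latter. Producing the identity for smooth/piecewise constant $\theta$ via a Fubini-type swap of the two integrations, and then passing to the limit via the sewing estimate for $\Gamma^w b$, is the heart of the argument; once this is in place, Steps 1 and 3 reduce to quoting the already established results.
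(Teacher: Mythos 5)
Your Steps 1 and 3 reproduce the paper's proof: transfer of regularity from $T^w b$ to $\Gamma^w b$ via the stochastic estimates of Section~\ref{sec: avg fields w multiplicative noise}, then the abstract nonlinear YDE theory (Theorem~\ref{secyoung thm existence and uniquness of YDE}, Lemma~\ref{lem: conditions path-by-path wellposedness}) for global existence, uniqueness and the $C^1_{x,loc}$ flow, with adaptedness obtained exactly as you say, from $\{\Gamma^w_s b,\, s\le t\}\subset\sigma\{\beta_s:\,s\le t\}$ together with the representation $\theta=\mathcal{I}(\Gamma^w b)(\cdot,\theta_0)$. Two corrections, however. First, your Step 2 is not part of this theorem, and as you formulate it for distributional $b$ it cannot be carried out: the integral $\int_0^t b(\theta_s+w_s)\,\mathd\beta_s$ has no independent meaning in that regime, so there is no identity to verify. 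In the paper a ``pathwise solution'' is \emph{defined} (Definition~\ref{defn: solution}) as a process whose shift $\theta=x-w$ solves the nonlinear YDE driven by $\Gamma^w b$; the consistency with classical Young solutions is a separate statement, proved only for regular coefficients or by approximation (Proposition~\ref{sectemp prop smooth case} and Proposition~\ref{prop: justification solutions}). Hence what you call ``the heart of the argument'' is not needed here: the proof of Theorem~\ref{main thm1} consists of your Steps 1 and 3 plus the observation that any two pathwise solutions from $x_0$ are, by definition, concentrated on the random solution set $C(x_0-w_0,\Gamma^w b(\omega))$, which is almost surely a singleton, giving indistinguishability. Second, Step 1 overstates the regularity transfer: Theorem~\ref{thm: main result sec3} yields $\Gamma^w b\in C^{\gamma'}_t C^{1+\eta,\lambda}_x$ almost surely for any $\gamma'<\gamma+H-1$, $\eta<1$, $\lambda>0$, i.e.\ a weighted spatial H\"older bound with an $\varepsilon$ loss, not a global $C^2_x$ bound (the GRR-type argument produces the $R^\lambda$ growth). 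This matters because global existence in Theorem~\ref{secyoung thm existence and uniquness of YDE} rests on the growth condition $\eta+\lambda\le 1$, and Remark~\ref{rem: sufficent regularity requirement} is precisely the check that $\gamma'>1/2$, $\gamma'(1+\eta)>1$ and $\eta+\lambda\le 1$ can be arranged simultaneously when $\gamma>3/2-H$. With these two adjustments your outline coincides with the paper's argument.
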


For the definitions of pathwise solution and path-by-path wellposedness, we refer to Section~\ref{sec: concepts solution}. Let us mention that pathwise solutions need not to be adapted, which is instead a consequence of Theorem~\ref{main thm1}; this is a non trivial fact, as there are SDEs for which path-by-path uniqueness holds but there exist no adapted solutions, see~\cite{shaposhnikov2020pathwise}.

A rigorous construction of the random field $\omega\mapsto \Gamma^w(\omega)$, together with its space-time regularity, is presented in Section~\ref{sec: avg fields w multiplicative noise}. The notation $\mathcal{I}(\Gamma^w b(\omega))$ is not by chance: as shown in Corollary~\ref{cor: continuous dependence flow map}, it's possible to define a map continuous $\mathcal{I}(\cdot)$ which maps drifts of prescribed regularity into flows. Therefore equation~\eqref{eq: form solution} implies that the solution map admits the following decomposition:
\[
\omega\mapsto \Gamma^w b(\omega) \mapsto \mathcal{I}(\Gamma^w b(\omega)) \mapsto x(\omega)
\]
where the first map is measurable, but the other ones are continuous; this is in a nice analogy with the classical decomposition of the It\^o-Lyons map from rough path theory.\\

A justification of our interpretation of the SDE in terms of a nonlinear YDE related to $\Gamma^w b$ comes from the next statement.

\begin{prop}\label{prop: justification solutions}
Let $H\in (1/2)$, $b$, $w$, $\beta$ as above. Then:
\begin{itemize}
    \item[i.] If $b$ and $w$ are regular, then any pathwise solution to the SDE
    \[
    x_t(\omega) = x_0 + \int_0^t b(x_s(\omega)) \mathd \beta_s(\omega) + w_t,
    \]
    where the integral is interpreted in the Young sense, is also a pathwise solution in the sense of Definition~\ref{defn: solution}.
    \item[ii.] If condition~\eqref{main thm1 condition} holds, then it's possible to find sequences $(b^n,w^n)$ of regular coefficients such that $(b^n,w^n)\to (b,w)$ and the associated pathwise solutions $x^n$ converge in probability to the unique pathwise solution $x$ given by Theorem~\ref{main thm1}.
    \item[iii.] More generally, if condition~\eqref{main thm1 condition} holds, for any sequence of regular coefficients $(b^n,w^n)\to (b,w)$ such that
    \[
    T^{w^n} b^n \text{ is Cauchy in } C^\gamma_t C^2_x \text{ for some } \gamma\in \left(\frac{3}{2}-H,1 \right)
    \]
    the associated pathwise solutions $x^n$ converge in probability to $x$.
\end{itemize}
\end{prop}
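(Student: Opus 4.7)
All three parts rest on combining: (a) the continuity of the nonlinear Young solution map $\mathcal{I}$ (Corollary~\ref{cor: continuous dependence flow map}); (b) a consistency identity between the two notions of solution in the regular regime; and (c) the stochastic estimates transferring regularity from $T^w b$ to $\Gamma^w b$ developed in Section~\ref{sec: avg fields w multiplicative noise}.

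For part (i), take $b$ smooth and $w$ $\delta$-Hölder with $H+\delta>1$, so that Young integrals against $\beta(\omega)$ are classically defined. Given a pathwise Young solution $x$, set $\theta:=x-w$; then $\theta$ inherits $H$-Hölder regularity and satisfies $\theta_t=\theta_0+\int_0^t b(w_s+\theta_s)\,\mathd\beta_s$. The identity
\begin{equation*}
\int_0^t b(w_s+\theta_s)\,\mathd\beta_s=\int_0^t \Gamma^w b(\mathd s,\theta_s)
\end{equation*}
is a substitution-type equality for nonlinear Young integrals: both sides are limits, along any partition $\{t_i\}$ of $[0,t]$, of Riemann sums. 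On the right one finds $\sum_i [\Gamma^w b(t_{i+1},\theta_{t_i})-\Gamma^w b(t_i,\theta_{t_i})]=\sum_i\int_{t_i}^{t_{i+1}} b(\theta_{t_i}+w_r)\,\mathd\beta_r$, which differs from $\sum_i\int_{t_i}^{t_{i+1}} b(\theta_r+w_r)\,\mathd\beta_r$ by a sum whose pieces are controlled by $\|b\|_{C^1}$, the Hölder norms of $\theta$ and $\beta$, and the mesh; passing to the limit yields the claim. Hence $\theta$ solves the nonlinear YDE~\eqref{eq:formal young 2} and $x=w+\theta$ is a solution in the sense of Definition~\ref{defn: solution}.

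For part (iii), let $(b^n,w^n)$ be regular with $(b^n,w^n)\to(b,w)$ and $T^{w^n}b^n$ Cauchy in $C^\gamma_t C^2_x$ for some $\gamma\in(3/2-H,1)$. By (i), the pathwise solution $x^n$ admits the decomposition $x^n=w^n+\theta^n$ with $\theta^n=\mathcal{I}(\Gamma^{w^n}b^n)(\cdot,x_0-w^n_0)$. The crucial step is to lift the Cauchy property of $T^{w^n}b^n$ in $C^\gamma_t C^2_x$ to a Cauchy property of $\Gamma^{w^n}b^n$, in probability, in an ambient space on which $\mathcal{I}$ is continuous: this is obtained by applying the stochastic averaging estimates of Hairer and Li (in the quantitative, linear-in-$b$ form used in Section~\ref{sec: avg fields w multiplicative noise}) to the differences $\Gamma^{w^n}b^n-\Gamma^{w^m}b^m$. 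Once such a bound is in place, continuity of $\mathcal{I}$ yields $\theta^n\to\theta$ in probability, and therefore $x^n=w^n+\theta^n\to w+\theta=x$ in probability, identifying the limit with the unique solution from Theorem~\ref{main thm1}. Part (ii) then reduces to exhibiting one admissible sequence: a joint mollification $b^n:=b\ast\rho_n$, $w^n:=w\ast\rho_n$ works, since standard mollifier estimates give $T^{w^n}b^n\to T^w b$ in $C^\gamma_t C^2_x$ under~\eqref{main thm1 condition}, putting us in the situation of (iii).

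The main obstacle is the passage from the assumed regularity of $T^{w^n}b^n-T^{w^m}b^m$ to the corresponding probabilistic regularity of $\Gamma^{w^n}b^n-\Gamma^{w^m}b^m$. The map $w\mapsto \Gamma^w b$ is nonlinear in $w$, so one cannot simply invoke the estimates for a single $(w,b)$; one needs a difference version in which both the drift $b$ and the shift $w$ are allowed to vary, with the right-hand side controlled by $\|T^{w^n}b^n-T^{w^m}b^m\|_{C^\gamma_t C^2_x}$ (plus a vanishing contribution from $w^n-w^m$ in a weaker norm). All remaining ingredients---continuity of the nonlinear YDE flow, consistency in the smooth regime, and approximation by mollification---are then standard.
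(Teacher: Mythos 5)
Parts (i) and (iii) of your argument follow essentially the paper's route. Part (i) is the substitution identity proved in Proposition~\ref{sectemp prop smooth case}, and your Riemann-sum comparison is an equivalent way of deriving it. In part (iii), the ``difference version'' that you single out as the main obstacle is in fact already available: estimate~\eqref{eq: main thm sec3 estimate} of Theorem~\ref{thm: main result sec3} is stated for arbitrary pairs $(b^i,w^i)$ and is linear in $T^{w^1}b^1-T^{w^2}b^2$, so the Cauchy property of $T^{w^n}b^n$ in $C^\gamma_t C^2_x$ transfers directly to $\Gamma^{w^n}b^n$ in $L^p(\Omega;C^{\gamma'}_t C^{1+\eta,\lambda}_x)$. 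The only point you gloss over is the identification of the limit: since $T^{w^n}b^n\to T^w b$ in the sense of distributions, the strong Cauchy limit must be $T^w b$ itself, hence $\Gamma^{w^n}b^n\to\Gamma^w b$, and Lemma~\ref{secyoung lem approximations} (or your continuity-of-$\mathcal{I}$ argument) then identifies the limit process with the solution of Theorem~\ref{main thm1}. This is a small omission, easily repaired.

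Part (ii), however, contains a genuine gap. You claim that the joint mollification $b^n=b\ast\rho_n$, $w^n=w\ast\rho_n$ yields $T^{w^n}b^n\to T^w b$ in $C^\gamma_t C^2_x$ by ``standard mollifier estimates''. This is exactly the pitfall the paper stresses immediately after the proposition: mollifying $w$ destroys its regularising effect, and no standard estimate gives convergence in the strong norm. Concretely, writing $T^{w^n}b^n-T^w b=(T^{w^n}b^n-T^w b^n)+(T^w b^n-T^w b)$, the second term is handled by $T^w b^n=\rho^n\ast T^w b$ (Lemma~\ref{lem: smooth approximation Twb}), but the first is only controlled by bounds of the type $\Vert b^n\Vert_{C^3_b}\,\Vert w^n-w\Vert_\infty$ (and interpolations thereof); with both objects mollified at the same scale, $\Vert b^n\Vert_{C^3_b}$ blows up like $\varepsilon_n^{-(3-\alpha)}$ for distributional $b\in C^\alpha_x$ while $\Vert w^n-w\Vert_\infty$ decays only at the modulus of continuity of $w$, so the product need not vanish, and there is no mechanism to bound $T^{w^n}b^n$ in $C^2_x$ uniformly since $w^n$ is smooth. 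The correct construction, used in the paper via Remark~\ref{rem: approximation mult avg}, decouples the two approximations: first mollify $b$ alone, which preserves the $C^\gamma_t C^2_x$ control because $T^w b^\varepsilon=\rho^\varepsilon\ast T^w b$, and only then, for each fixed $b^\varepsilon$, approximate $w$ by $\delta$-H\"older paths at a rate chosen in terms of $\Vert b^\varepsilon\Vert_{C^k_b}$ so that the error above vanishes, concluding by a diagonal argument. With this replacement your reduction of (ii) to (iii) goes through as intended.
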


We have left some of the details of Proposition~\ref{prop: justification solutions} (the exact regularity, the notions of convergence, etc.) vague on purpose, as it should be regarded as some kind of meta theorem or general principle; more details will be given in the proof in Section~\ref{sec: proofs main results}.\\
Let us stress that condition $(b^n,w^n)\to (b,w)$ alone is \textit{not} enough to deduce $x^n\to x$! Indeed, if we mollify the path $w$ first, then its irregularity and its regularising effect on equation (measured by the regularity of $T^w b$) are completely lost; in order to build approximations schemes, one needs to first approximate $b$ by a more regular version $b^n$ and only then approximate $T^w b^n$ by $T^{w^n} b^n$, so that at each step the regularity of the averaged field is preserved.\\

Direct-to-check conditions on the regularity of $T^w b$, as well as higher regularity for the flow, are given by the next statement. 

\begin{thm}\label{main thm2}
Let $b\in C^\alpha_x$, $\alpha\in\mathbb{R}$, $w$ be such that $T^w b\in C^{1/2}_t C^{\alpha+\nu}_x$ for $\nu>0$ satisfying
\begin{equation}\label{main thm2 condition}
    \alpha + \nu (2H-1)>2.
\end{equation}
Then the hypothesis of Theorem~\ref{main thm1} are met. If in addition $T^w b\in C^{1/2}_t C^{\alpha+\nu}_x$ with
\begin{equation}\label{main thm2 condition2}
    \alpha + \nu (2H-1)>n+1,
\end{equation}
then the random flow associated to the SDE is $C^n_{x,loc}$.
\end{thm}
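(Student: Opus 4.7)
The proof is essentially an interpolation argument, reducing Theorem~\ref{main thm2} to Theorem~\ref{main thm1} together with the higher-regularity refinement of the latter. The starting observation is that, besides the given hypothesis $T^w b\in C^{1/2}_t C^{\alpha+\nu}_x$, one has the trivial bound $T^w b\in C^1_t C^\alpha_x$: indeed
\[
T^w b(t,y)-T^w b(s,y)=\int_s^t b(y+w_r)\,\dd r
\]
is Lipschitz in $t$ with values in $C^\alpha_x$, simply because $r\mapsto b(\cdot +w_r)$ takes values in $C^\alpha_x$ with fixed norm $\|b\|_{C^\alpha}$ (this remains meaningful even for distributional $b$, i.e.\ $\alpha<0$, via the usual identification $C^\alpha_x = B^\alpha_{\infty,\infty}$). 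Real interpolation between these two mixed H\"older spaces produces, for every $\theta\in[0,1]$, the regularity
\[
T^w b\in C^{1/2+\theta/2}_t C^{\alpha+(1-\theta)\nu}_x.
\]

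To prove the first assertion, we seek $\theta$ so that simultaneously the spatial exponent is at least $2$ and the time exponent exceeds $3/2-H$. The two requirements translate into
\[
2-2H<\theta \leq 1-\frac{2-\alpha}{\nu},
\]
and this interval is non-empty precisely when $\alpha+\nu(2H-1)>2$, i.e.\ under~\eqref{main thm2 condition}. Picking such a $\theta<1$ gives $T^w b\in C^\gamma_t C^2_x$ for some $\gamma\in(3/2-H,1)$, which is exactly the hypothesis of Theorem~\ref{main thm1}; its conclusions then transfer verbatim. For the higher-regularity statement, the same interpolation with target spatial exponent $n+1$ in place of $2$ yields $T^w b\in C^\gamma_t C^{n+1}_x$ with $\gamma>3/2-H$ exactly under~\eqref{main thm2 condition2}; the $C^n_{x,loc}$ regularity of the flow then follows from the higher-regularity version of the non-linear YDE theory recalled in Section~\ref{sec:Non-linear Young integration and equations}, combined with the representation~\eqref{eq: form solution}.

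The two technical points to pin down are: first, making the interpolation step rigorous for H\"older-space valued functions of time, which is standard (one combines the real interpolation of Besov scales in the space variable with the real interpolation of vector-valued H\"older spaces in the time variable); and second, explicitly citing the flow-regularity refinement that accompanies Theorem~\ref{main thm1}, which is implicit in Corollary~\ref{cor: continuous dependence flow map} and follows from the fact that the non-linear YDE solution map sends $C^\gamma_t C^{n+1}_x$ driving fields into $C^n_{x,loc}$ flows. Neither presents a serious obstacle: the substance of Theorem~\ref{main thm2} lies not in its proof but in repackaging Theorem~\ref{main thm1}'s hypothesis as a single scale of mixed regularity of $T^w b$, with the exponent $2H-1$ emerging naturally from the interpolation balance between the smoothed endpoint $C^{1/2}_t C^{\alpha+\nu}_x$ and the trivial endpoint $C^1_t C^\alpha_x$.
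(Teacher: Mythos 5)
Your argument is correct and is essentially the paper's proof: the paper packages the same interpolation between the trivial endpoint $C^1_t C^\alpha_x$ and the hypothesis $C^{1/2}_t C^{\alpha+\nu}_x$ as Lemma~\ref{lem: interpolation averaging} (giving $T^w b\in C^\gamma_t C^{\alpha+2\nu(1-\gamma)}_x$), and then picks $\gamma>3/2-H$ exactly as you pick $\theta$, with the same feasibility condition~\eqref{main thm2 condition} (resp.~\eqref{main thm2 condition2}). The only cosmetic difference is the reference for the flow regularity: the paper first upgrades $T^w b\in C^\gamma_t C^{n+1}_x$ to $\Gamma^w b\in C^{\gamma'}_t C^{n+\eta,\lambda}_x$ via Theorem~\ref{thm: main result sec3} and then invokes the last part of Theorem~\ref{secyoung thm existence and uniquness of YDE}, rather than Corollary~\ref{cor: continuous dependence flow map}.
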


If both the diffusion coefficient $b$ and the perturbation $w$ are sufficiently regular to give meaning to the SDE as a classical Young differential equation, but not to establish its uniqueness, we can exploit the double formulation of the problem, as a Young SDE and a nonlinear YDE, to establish uniqueness under weaker regularity for $T^w b$ than that of Theorem~\ref{main thm2}. However, this comes at the price of prescribing some H\"older regularity for $w$, which might limit its regularising effect.

\begin{thm}\label{main thm4}
Let $\beta$ as above, $b\in C^\alpha_x$ for some $\alpha\in (0,1)$ and $w\in C^\delta_t$ a deterministic path with $H+\alpha\delta>1$; suppose that $T^w b\in C^{1/2}_t C^{\alpha+\nu}_x$ for some $\nu>0$ satisfying
\begin{equation}\label{main thm4 condition}
    \alpha + \nu (2H-1) > 1 + \frac{1}{2H}.
\end{equation}
Then for $\mu^H$-a.e. $\omega$ the following holds: for every $x_0\in \mathbb{R}^d$ there exists a unique solution to
\[
x_t = x_0 + \int_0^t b(x_s)\mathd \beta_s(\omega) +w_t
\]
in the class $x\in (w+C^{H-}_t)\cap C^\delta_t$, where the above integral is meaningful in the Young sense.
\end{thm}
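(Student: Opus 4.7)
The plan is to combine the two available formulations of the equation: the Young SDE (well defined under $H+\alpha\delta>1$) and the nonlinear YDE for $\theta=x-w$ driven by $\Gamma^w b$ (well behaved thanks to the Hairer--Li estimates built in Section \ref{sec: avg fields w multiplicative noise}). Existence will come from the nonlinear YDE side, whereas uniqueness will be transferred from the nonlinear YDE to the Young SDE by identifying the two notions of solution on the prescribed class $(w+C^{H-}_t)\cap C^\delta_t$.

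\textbf{Existence.} From $T^w b\in C^{1/2}_tC^{\alpha+\nu}_x$ and \eqref{main thm4 condition}, the construction of Section \ref{sec: avg fields w multiplicative noise} yields $\mu^H$-a.s.\ the regularity $\Gamma^w b\in C^{H-}_t C^{1+\kappa}_x$ for some $\kappa>1/(2H)$. The nonlinear YDE theory of Section \ref{sec:Non-linear Young integration and equations} then produces a unique $\theta\in C^{H-}_t$ starting at $x_0-w_0$ and solving $\theta_t=x_0-w_0+\int_0^t \Gamma^w b(\mathd s,\theta_s)$; setting $x:=w+\theta$ provides a solution in the desired class.

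\textbf{Identification and uniqueness.} Given any Young solution $x\in(w+C^{H-}_t)\cap C^\delta_t$, set $\theta:=x-w$. The crucial step is to show that $\theta$ solves the same nonlinear YDE. This is done by a sewing argument comparing, on small intervals $[s,t]$, the three quantities
\[
I_1(s,t)=\int_s^t b(x_r)\,\mathd\beta_r,\quad I_2(s,t)=\Gamma^w b(s,t;\theta_s)=\int_s^t b(\theta_s+w_r)\,\mathd\beta_r,\quad I_3(s,t)=\int_s^t \Gamma^w b(\mathd r,\theta_r).
\]
All three share the same leading germ $b(x_s)(\beta_t-\beta_s)$: the Young estimates for $I_1$ and $I_2$ (using $b\in C^\alpha$ together with $x,w\in C^\delta$, so that $r\mapsto b(x_r)$ and $r\mapsto b(\theta_s+w_r)$ are $\alpha\delta$-H\"older) give errors $O(|t-s|^{H+\alpha\delta})$, and the nonlinear Young sewing estimate for $I_3$ gives error $O(|t-s|^{H(1+\kappa)})$. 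Since $H+\alpha\delta>1$ by hypothesis and $\kappa>1/(2H)>1/H-1$ by \eqref{main thm4 condition}, both exponents exceed $1$, and the uniqueness clause in the sewing lemma forces $I_1=I_2=I_3$; hence $\theta$ is a genuine nonlinear YDE solution. The nonlinear YDE uniqueness result (for which $\kappa>1/(2H)$ is precisely the threshold entering the Picard/Gronwall-type estimate on $\theta^1-\theta^2$, see Section \ref{sec:Non-linear Young integration and equations}) then yields $\theta^1=\theta^2$, hence $x^1=x^2$.

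\textbf{Main obstacle.} The central difficulty is the sewing identification of $I_1$, $I_2$ and $I_3$, as it reconciles two a priori distinct integration theories on the same path. One must track several H\"older exponents simultaneously and verify that \eqref{main thm4 condition} is exactly what balances the Young error exponent $H+\alpha\delta$ against the nonlinear Young error $H(1+\kappa)$. The assumption $w\in C^\delta$ with $H+\alpha\delta>1$ is indispensable for the classical Young integrals defining $I_1$ and $I_2$ to be meaningful, while the stronger threshold $\kappa>1/(2H)$ (rather than the weaker $\kappa>1/H-1$ that would merely suffice for the sewing identification of $I_3$) is what drives the subsequent uniqueness step for the nonlinear YDE in the class $C^{H-}_t$.
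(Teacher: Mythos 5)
Your overall architecture --- existence, a germ/sewing identification of the Young SDE with the nonlinear YDE driven by $\Gamma^w b$, and uniqueness for the latter --- is the same hybrid strategy the paper uses (via Proposition \ref{sectemp prop uniqueness mildly regular case}). However, there is a genuine gap in the regularity claim on which both your existence step and your exponent bookkeeping rest. Under $T^w b\in C^{1/2}_t C^{\alpha+\nu}_x$ and \eqref{main thm4 condition}, the construction of Section \ref{sec: avg fields w multiplicative noise} does \emph{not} give $\Gamma^w b\in C^{H-}_t C^{1+\kappa}_x$ with $\kappa>1/(2H)$. What it gives (Lemma \ref{lem: interpolation averaging} combined with Corollary \ref{cor: regualrity of multiplcative field}) is a trade-off: $T^w b\in C^\gamma_t C^{\alpha+2\nu(1-\gamma)}_x$ for $\gamma\in[1/2,1]$, hence a.s.\ $\Gamma^w b\in C^{\gamma'}_t C^{\eta',\lambda}_x$ for $\gamma'<\gamma+H-1$, $\eta'<\alpha+2\nu(1-\gamma)$. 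In the near-critical regime for \eqref{main thm4 condition}, reaching spatial regularity above $1+\tfrac{1}{2H}$ forces $\gamma$ to stay close to $3/2-H$, i.e.\ $\gamma'$ only slightly below $1/2$, nowhere near $H$; conversely $\gamma'$ near $H$ forces spatial regularity near $\alpha<1$. With the correct regularity ($\gamma'\approx 1/2$, $\eta\approx\alpha+\nu(2H-1)-1$) your existence step collapses: the abstract nonlinear YDE existence result (Theorem \ref{secyoung thm existence and uniquness of YDE}) requires $\gamma'(1+\eta)>1$, which here would read $\tfrac12\big(1+\tfrac{1}{2H}\big)>1$, i.e.\ $H<1/2$ --- false. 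This is exactly why the paper obtains existence on the Young SDE side instead: mollify $b$, use the a priori bound \eqref{sectemp smooth a priori bound 1} (which is where $b\in C^\alpha_x$ and $H+\alpha\delta>1$ are really used) and Ascoli--Arzel\`a; this is also what produces solutions in the class $(w+C^{H}_t)\cap C^\delta_t$, whose extra $H$-H\"older regularity of $\theta=x-w$ is then essential for uniqueness.

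Your identification of $I_1$, $I_2$, $I_3$ is the right idea and is essentially the paper's Step 2 (cf.\ Proposition \ref{sectemp prop smooth case}), but the exponents must be corrected: the sewing error for $I_3$ is $O(|t-s|^{\gamma'+\eta H})$, not $O(|t-s|^{H(1+\kappa)})$, and condition \eqref{main thm4 condition} enters precisely to guarantee a choice of $\gamma'<\gamma+H-1$ and $1+\eta<\alpha+2\nu(1-\gamma)$ with $\gamma'+\eta H>1$; it is not a balance against $H+\alpha\delta$, which only serves to make the Young integrals (and the pathwise definition of $\Gamma^w b$) meaningful and to run the compactness argument. Your closing discussion is also internally inconsistent: if $\Gamma^w b$ really had time regularity $H-$, uniqueness in the class $C^{H-}_t$ would only require $\kappa>1/H-1$, and \eqref{main thm4 condition} could be weakened; the true source of the $1/(2H)$ threshold is the limited time regularity $\gamma'\approx 1/2$ of $\Gamma^w b$, combined with the $H$-H\"older regularity of the solutions, through $\gamma'+\eta H>1$ --- which is then used, as in Steps 3--4 of Proposition \ref{sectemp prop uniqueness mildly regular case}, to write the difference of two solutions as a linear YDE driven by a path $V$ built from $\nabla\Gamma^w b$ along the solutions and conclude that it vanishes.
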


The proofs of Theorems~\ref{main thm1}-\ref{main thm4} will be presented in Section~\ref{sec: proofs main results}; observe that they only rely on the analytical regularity of $T^w b$, where $w$ is a deterministic continuous path.
There is plenty of choice for $w$, as the next statements show.

\begin{cor}\label{main cor3}
Let $w$ be sampled as an fBm of parameter $\delta\in (0,1)$, $b$ be a compactly supported distribution of regularity $C^\alpha_x$, $\alpha\in\RR$, such that
\begin{equation}\label{main cor3 condition}
    \alpha>2-\frac{1}{\delta}\left(H-\frac{1}{2}\right).
\end{equation}
Then almost every realisation of $w$ satisfies condition~\eqref{main thm2 condition}. If in addition
\begin{equation}\label{main cor3 condition2}
    \alpha>n+1-\frac{1}{\delta}\left(H-\frac{1}{2}\right),
\end{equation}
then almost every realisation satisfies condition~\eqref{main thm2 condition2}. Moreover, under~\eqref{main cor3 condition} (resp. \eqref{main cor3 condition2}), generic $w\in C^\delta_t$ satisfy \eqref{main thm2 condition} (resp. \eqref{main thm2 condition2}), genericity being understood in the sense of prevalence. Finally, if $w$ is sampled as either a $p-\log$-Brownian motion or an infinite series of fBms (see Section~4 from~\cite{harang2020cinfinity}), then any choice of $\alpha\in\RR$ and $n\in\NN$ is allowed and we can drop the assumption of compact support on $b\in C^\alpha_x$.
\end{cor}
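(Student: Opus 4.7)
The plan is to reduce the corollary to the analytic hypothesis of Theorem~\ref{main thm2} by feeding in the stochastic averaging estimates surveyed in Section~\ref{sec2.1}.

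First I would invoke the sharp Catellier--Gubinelli-type estimate for fractional Brownian motion: if $w$ is an fBm of Hurst parameter $\delta\in(0,1)$ and $b$ is a compactly supported distribution in $C^\alpha_x$, then almost surely
\[
T^w b\in C^{1/2}_t C^{\alpha+\nu}_x\qquad\text{for every }\nu<\tfrac{1}{2\delta}.
\]
This is the usual gain of $1/(2\delta)$ spatial derivatives coming from local non-determinism of fBm combined with a stochastic sewing argument; the compact support of $b$ disposes of its behaviour at infinity in the Fourier bounds. Plugging $\nu$ arbitrarily close to $1/(2\delta)$ into~\eqref{main thm2 condition} yields
\[
\alpha+\frac{2H-1}{2\delta}>2,
\]
which is exactly~\eqref{main cor3 condition}; the analogous substitution in~\eqref{main thm2 condition2} gives~\eqref{main cor3 condition2}. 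Hence almost every realisation of $w$ meets the hypotheses of Theorem~\ref{main thm2}.

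For the prevalence statement I would exhibit the fBm law $\mu^\delta$ as a transverse measure on $C^\delta_t$ for the set $A\subset C^\delta_t$ of paths with the required regularity of $T^w b$. Concretely, this amounts to showing that for every fixed $w_0\in C^\delta_t$, $T^{w_0+w}b$ enjoys the claimed regularity for $\mu^\delta$-a.e.\ $w$; this is a routine extension of the averaging estimate above, since adding a deterministic H\"older perturbation does not destroy the local non-determinism bounds driving its proof.

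Finally, when $w$ is sampled as a $p$-log-Brownian motion or an infinite series of fBms, Section~4 of~\cite{harang2020cinfinity} provides averaging estimates giving $T^w b\in C^{1/2}_t C^{\alpha+\nu}_x$ a.s.\ for every $\nu>0$, so~\eqref{main thm2 condition} and~\eqref{main thm2 condition2} hold for arbitrary $\alpha\in\RR$ and $n\in\NN$. The only delicate point I would expect is the compact support hypothesis in the pure fBm case: for $\alpha<0$ the Fourier symbol of $b$ need not decay, so some decay on $b$ is needed to justify the stochastic integrals defining $T^w b$. For the richer noises, the much finer averaging properties established in~\cite{harang2020cinfinity} compensate for this and make the compact support assumption redundant.
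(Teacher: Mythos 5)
Your proposal is correct and follows essentially the same route as the paper: cite the averaging estimate $T^w b\in C^{1/2}_t C^{\alpha+\nu}_x$ for all $\nu<1/(2\delta)$ from~\cite{galeati2020noiseless} (and its prevalence version, which the paper obtains by simply quoting Theorem~2 there rather than re-running the transverse-measure argument you sketch), choose $\nu=1/(2\delta)-\varepsilon$ to verify conditions~\eqref{main thm2 condition} and~\eqref{main thm2 condition2}, and invoke the infinitely regularising property from Section~4 of~\cite{harang2020cinfinity} for the $p$-log-Brownian motion and infinite-series-of-fBms cases.
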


\begin{proof}
The case of $w$ sampled as an fBm follows from the results from~\cite{galeati2020noiseless}, see for instance Remark~7 or Section~3.3 more in general; indeed for $b$ as above, almost every realisation of $w$ satisfies
\[
T^w b\in C^{\frac{1}{2}}_t C^{\alpha+\nu}_x \quad \forall\ \nu<\frac{1}{2\delta}.
\]
Under condition~\eqref{main cor3 condition}, it's possible to find $\varepsilon>0$ small enough such that $\nu=1/(2\delta)-\varepsilon$ satisfies~\eqref{main thm2 condition}; similarly under condition~\eqref{main cor3 condition2}, we can choose $\nu=1/(2\delta)-\varepsilon$ so that~\eqref{main thm2 condition2} holds.
The conclusion follows from an application of Theorem~\ref{main thm2}.
The statement for generic $w\in C^\delta_t$ follows from the exact same reasoning, only applying Theorem~2 from~\cite{galeati2020noiseless} instead.
The last statement follows from the fact that these processes are infinitely regularising (see Section~4 from~\cite{harang2020cinfinity} for more details), so that $T^w b\in C^\alpha_t C^n_x$ for all $\alpha\in (0,1)$ and $n\in\NN$.
\end{proof}

\begin{rem}
The result shows that the introduction of a suitable perturbation $w$ allows to give meaning and solve the SDE with arbitrarily irregular distributional drift $b$; moreover the associated flow of solutions can become arbitrarily regular in space.
\end{rem}

\begin{cor}\label{main cor5}
Let $w$ be sampled as an fBm of parameter $\delta\in (0,1)$ such that $\delta+H<1$ and $b$ be a compactly supported distribution of regularity $C^\alpha_x$ such that
\begin{equation}\label{main cor5 condition}
    \alpha>\max\left\{\frac{1-H}{\delta}, 1+\frac{1}{2H}-\frac{1}{\delta}\left(H-\frac{1}{2}\right)\right\}.
\end{equation}
Then almost every realisation of $w$ satisfies the assumptions of Theorem~\ref{main thm4}. Moreover, under~\eqref{main cor5 condition}, generic $w\in C^\delta_t$ satisfy \eqref{main thm4 condition}, genericity being understood in the sense of prevalence.
\end{cor}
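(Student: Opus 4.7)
The strategy mirrors the proof of Corollary~\ref{main cor3}: I need to verify, for almost every realisation of $w$, both ingredients required by Theorem~\ref{main thm4}, namely (a) the Hölder bound $w\in C^\delta_t$ with $H+\alpha\delta>1$ and (b) the spatial regularity $T^wb\in C^{1/2}_t C^{\alpha+\nu}_x$ for some $\nu>0$ satisfying \eqref{main thm4 condition}. The maximum appearing in \eqref{main cor5 condition} arises precisely from balancing these two constraints against the best averaging exponent $\nu$ available when $w$ is an fBm.

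For (a), almost every sample path of an fBm of parameter $\delta$ belongs to $C^{\delta-\varepsilon}_t$ for any $\varepsilon>0$, so $w\in C^\delta_t$ up to an arbitrarily small loss. The Young integrability requirement $H+\alpha\delta>1$ from Theorem~\ref{main thm4} rewrites as $\alpha>(1-H)/\delta$, which is the first entry of the maximum in \eqref{main cor5 condition}; in the regime $\delta+H<1$ this lower bound is genuinely larger than $1$, so the condition is nontrivial. This is the only step where the hypothesis $\delta+H<1$ plays a role, distinguishing the present corollary from Corollary~\ref{main cor3}, which lives in the opposite regime.

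For (b), I invoke the same stochastic estimate already used in the proof of Corollary~\ref{main cor3}: for $b$ a compactly supported distribution in $C^\alpha_x$ and $w$ an fBm of parameter $\delta$, almost every realisation satisfies
\[
T^w b \in C^{1/2}_t C^{\alpha+\nu}_x \quad \text{for every } \nu<\tfrac{1}{2\delta}
\]
(cf.\ Remark~7 and Section~3.3 of \cite{galeati2020noiseless}). Substituting $\nu=1/(2\delta)-\varepsilon$ into \eqref{main thm4 condition} and letting $\varepsilon\downarrow 0$, the inequality reduces to $\alpha>1+\frac{1}{2H}-\frac{1}{\delta}(H-\tfrac12)$, which is the second entry of the maximum. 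Since \eqref{main cor5 condition} requires $\alpha$ to exceed both thresholds, the hypotheses of Theorem~\ref{main thm4} hold almost surely, and an application of that theorem yields the conclusion.

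The prevalence statement for generic $w\in C^\delta_t$ follows by the identical line of argument, with Theorem~2 of \cite{galeati2020noiseless} replacing the fBm-specific estimate: the bound on the regularity of $T^w b$ carries over with exactly the same exponent $\nu<1/(2\delta)$ in the prevalence sense. I do not anticipate any substantial obstacle in this corollary: the proof is essentially a careful bookkeeping of Hölder exponents, the two entries of the maximum encoding exactly the two independent constraints (pathwise Young integrability of $\int b(x)\dd\beta$ and the averaged-field regularity needed by Theorem~\ref{main thm4}) that must be satisfied simultaneously.
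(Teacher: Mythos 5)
Your proof is correct and follows essentially the same route as the paper's own argument: the first entry of the maximum in \eqref{main cor5 condition} yields $H+\alpha\delta>1$, the second allows the choice $\nu=1/(2\delta)-\varepsilon$ with $\varepsilon$ small so that \eqref{main thm4 condition} holds, and the conclusion follows from the averaging estimates of \cite{galeati2020noiseless} together with Theorem~\ref{main thm4}, with Theorem~2 of \cite{galeati2020noiseless} handling the prevalence statement. Nothing essential differs from the paper's proof of Corollary~\ref{main cor5}.
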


\begin{proof}
The proof is analogue to that of Corollary~\ref{main cor3}, only relying on Theorem~\ref{main thm4} instead.
Under condition~\ref{main cor5 condition}, $H+\alpha\delta>1$ and we can find $\nu=1/(2\delta)-\varepsilon$ with $\varepsilon>0$ sufficiently small such that~\eqref{main thm4 condition} holds.
The conclusion then follows from the results from~\cite{galeati2020noiseless} and Theorem~\ref{main thm4}.
\end{proof}

\begin{rem}
It can be checked that, in order for condition~\eqref{main cor5 condition} to be satisfied for some $\alpha<1$, it must be imposed $H>\sqrt{2}/2$. With a slight abuse, we can consider the fBm of parameter $H=1$ to be given by $\beta_t = N t$, where $N$ is a standard normal (this is the only possible $1$-self-similar centered Gaussian process); observe that in the limit $H\uparrow 1$ conditions~\eqref{main cor3 condition},~\eqref{main cor5 condition} become respectively
\[
\alpha>2-\frac{1}{2\delta},\qquad \alpha>\max\left\{0,\frac{3}{2}-\frac{1}{2\delta}\right\}
\]
which is consistent with the results from~\cite{Catellier2016} with $\mathd \beta_t$ replaced by $\mathd t$. 
\end{rem}

\subsection{Outline of the paper}
In Section \ref{sec:prelim on nonlinear integration and avg fields} we give a short overview of the existing theory on classical averaged fields and non-linear Young integration. In Section \ref{sec: avg fields w multiplicative noise} we investigate the multiplicative averaged field, both from an analytic and probabilistic point of view, and establish its space-time regularity. Section \ref{sec:young eq with multiplicative noise} deals with regularisation of SDEs by additive perturbations; several theorems regarding existence and uniqueness are given, as well as a discussion of the meaning of wellposedness of these random equations. Proofs of the main results from Section \ref{sec: main results} are given here. In Section \ref{sec: further extensions}, some elementary extensions  of the previous results are provided. We conclude in Section \ref{sec:concluding} with a discussion on open problems and future directions.

\subsection{Notation}
Below is a list of frequently used notation and conventions:
\begin{itemize}
    \item We denote by $C^\infty_c(\RR^d)$ the space of smooth compactly supported  functions and by $\cD(\RR^d)$ its dual.
    \item Similarly, $\cS(\RR^d)$ is the Schwartz space of rapidly decreasing functions on $\RR^d$, $\cS'(\RR^d)$ its dual.
    \item $\cB^\alpha_{p,q}$ denotes the classical in-homogeneous Besov spaces, for $\alpha\in \RR$, $p,q\in [1,\infty]$.
    \item We write  $C^\alpha_x:=\cB_{\infty,\infty}^{\alpha}(\RR^d)$; $C^n_b(\mathbb{R^d};\mathbb{R}^n)$ is the space of bounded, $n$-times differentiable maps with bounded derivatives up to order $n$. Their norms are denoted respectively by $\Vert\cdot\Vert_{\alpha}, \Vert\cdot\Vert_{C^n_b}$.
    \item Given a Banach space $E$, $\gamma\in (0,1)$, $C^\gamma_t E=C^\gamma([0,T];E)$ denotes the classical H\"older space of $E$-valued functions; we equip it with the H\"older seminorm and norm
    \begin{equation*}
        \llbracket f\rrbracket_{\gamma,E}:= \frac{\|f_{s,t}\|_E}{|t-s|^\gamma}, \quad
        \|f\|_{\gamma,E}=\|f(0)\|_E+\llbracket f\rrbracket_{\gamma,E},  
    \end{equation*}
    where we use the increment notation $f_{s,t}:=f(t)-f(s)$. %
    \item Of particular interest will be the choices $E=\mathbb{R}^d$, $E=C^\eta_x$ and $E=C^{\eta,\lambda}_x$, where $C^{\eta,\lambda}_x$ denotes a weighted H\"older space, see Definition \ref{defn: weighted holder}; they define the spaces $C^\gamma_t = C^\gamma_t \mathbb{R}^d$, $ C^\gamma_t C_x^\eta$ and $C^\gamma_t C^{\eta,\lambda}_x$. Their norms will be denoted respectively by $\Vert \cdot\Vert_\gamma$, $\Vert \cdot\Vert_{\gamma,\eta}$, $\Vert \cdot\Vert_{\gamma,\eta,\lambda}$.
    \item Whenever there is no possible ambiguity, we will keep using the shorthand notations $\Vert b\Vert_\alpha$, $\Vert \beta\Vert_H$, $\llbracket w\rrbracket_\delta$, $\Vert T^w b\Vert_{\gamma,\eta}$, $\Vert\Gamma^w b\Vert_{\gamma,\eta,\lambda}$, etc. 
    \item For $z\in \RR^d$, we define the translation operator $\tau$ acting on fields $b:\RR^d\rightarrow\RR^n$ by $\tau^z b=b(\cdot+z)$.
    \item Given a continuous path $w$, for any $\gamma\in (0,1)$, we set $w+C^\gamma_t:=\{w+g,\, g\in C^\gamma_t\}$.
    \item We denote by $B_R$ the open ball in $\RR^d$ centered at $0$ with radius $R>0$.
    \item Whenever a filtered probability space $(\Omega,\mathcal{F},\{\mathcal{F}_t\},\mathbb{P})$ appears, it is always assumed that $\mathcal{F}$ is $\mathbb{P}$-complete and that $\{\mathcal{F}_t\}$ satisfies the usual assumptions. We denote by $\mathbb{E}$ expectation with respect to $\mathbb{P}$.
\end{itemize}

\section{Preliminaries on averaging and nonlinear Young integration}\label{sec:prelim on nonlinear integration and avg fields}

\subsection{Properties of classical averaged fields}\label{sec2.1}

The averaged field $T^w b$ is by now a well studied object, see e.g. \cite{galeati2020noiseless,galeati2020prevalence,harang2020cinfinity,Catellier2016}; there is however not a unique way to define it and, depending on the situations, some definitions might be more practical than others.
For self-containedness, we provide here to the reader a brief overview of the topic, together with some of its properties which will be handy for later analysis. 
We start with an analytical definition of $T^w b$.

\begin{defn}[Averaging operator and averaged field] \label{def: avg field}
Let $w:[0,T]\rightarrow \RR^d $ be a measurable path and $E$ be a separable Banach space, continuously embedded in $\cS'(\RR^d)$, on which translations act isometrically, i.e. $\Vert \tau^v b\Vert_E= \Vert b\Vert_E$. We define the averaging operator $T^w$ as the continuous linear map from $E$ to $Lip([0,T],E)$ given by
\begin{equation*}
    T^w_tb=\int_0^t \tau^{w_s}b\dd s \qquad \forall\, t\in [0,T]. 
\end{equation*}
where the integral is meaningful in the Bochner sense. We will refer to $T^wb$ as an averaged field. 
\end{defn}
If $E\hookrightarrow C(\mathbb{R}^d)$, then the above definition corresponds to the pointwise one given by
\[
T^w_t b(x)=\int_0^t b(x+w_s)\dd s.
\]
If in addition $w$ is a continuous path, then it's easy to check that $T^w$ maps $C^\infty_c(\mathbb{R}^d)$ continuously into itself, allowing to define by duality $T^w$ on $\mathcal{D}(\mathbb{R}^d)$ by setting
\[
\langle T^w\varphi,\psi\rangle:= \langle \varphi,T^{-w}\psi\rangle \quad \forall\, \varphi\in \mathcal{D},\, \psi\in C^\infty_c.
\]
The main advantage of this definition is that it requires no underlying probability space and already allows to deduce some basic properties of the operators $T^w$.
\begin{lem}\label{lem: properties of classical averagaed field}
Let $w$ and $b$ be as in Definition \ref{def: avg field}. Then the following properties holds: 
\begin{itemize}
    \item[i.] Averaging and spatial differentiation commute, i.e. $\partial_iT^wb=T\partial_ib$ for all $i=1,\ldots,d$. 
    \item[ii.] Averaging and spatial convolution commutes, i.e. for any $K\in C^\infty_c(\RR^d)$, the following relation hold
    \begin{equation*}
        K\ast (T^wb)=T^w(K\ast b)=(T^wK)\ast b. 
    \end{equation*}
\end{itemize}
\end{lem}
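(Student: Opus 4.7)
The plan is to reduce both identities to direct computations, leveraging two basic facts: translations commute with partial differentiation, and convolution with a smooth compactly supported kernel commutes with translations. The only subtlety is handling the possibly distributional $b$, which I would address either by working through the duality definition of $T^w$ on $\mathcal{D}(\RR^d)$ or by approximation by convolution.

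For (i), I would first verify the identity pointwise when $b$ is smooth: differentiating under the Bochner integral and invoking $\partial_i \tau^{w_s} b = \tau^{w_s} \partial_i b$ immediately gives $\partial_i T^w_t b = T^w_t \partial_i b$. For general $b\in E\subset \mathcal{S}'$, I would test against $\psi\in C^\infty_c$ and use the dual definition $\langle T^w_t b,\psi\rangle=\int_0^t \langle b,\tau^{-w_s}\psi\rangle\,\mathd s$, obtaining
\[
\langle \partial_i T^w_t b,\psi\rangle = -\langle T^w_t b,\partial_i\psi\rangle = -\int_0^t \langle b,\tau^{-w_s}\partial_i\psi\rangle\,\mathd s = -\int_0^t \langle b,\partial_i\tau^{-w_s}\psi\rangle\,\mathd s = \langle T^w_t \partial_i b,\psi\rangle,
\]
where the third equality uses $\tau^{-w_s}\partial_i = \partial_i\tau^{-w_s}$ and the last uses the distributional definition of $\partial_i b$. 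Since this holds for every test function $\psi$, the identity follows as elements of $\cD'(\RR^d)$, hence as elements of $E$ by the continuous embedding.

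For (ii), in the case $b\in E\hookrightarrow C(\RR^d)$ the computation is just Fubini:
\[
(K\ast T^w_t b)(x) = \int_{\RR^d} K(x-y)\int_0^t b(y+w_s)\,\mathd s\,\mathd y = \int_0^t \int_{\RR^d} K(x-y) b(y+w_s)\,\mathd y\,\mathd s,
\]
and the change of variables $z = y + w_s$ rewrites the inner integral as $(K\ast b)(x+w_s)$, giving $T^w_t(K\ast b)(x)$. Symmetrically, writing $(T^w_t K\ast b)(x) = \int b(y)\int_0^t K(x-y+w_s)\,\mathd s\,\mathd y$ and swapping the order of integration yields the same expression. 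For distributional $b$, I would rely on the fact that $K\ast b$ is a smooth function defined by $(K\ast b)(x) = \langle b, K(x-\cdot)\rangle$, so that $T^w_t(K\ast b)(x) = \int_0^t \langle b, K(x+w_s-\cdot)\rangle\,\mathd s$. On the other hand, applying the duality definition with test function $\psi = K(x-\cdot)$, and noting $\tau^{-w_s}K(x-\cdot) = K(x+w_s-\cdot)$, gives $(K\ast T^w_t b)(x) = \langle T^w_t b, K(x-\cdot)\rangle = \int_0^t \langle b, K(x+w_s-\cdot)\rangle\,\mathd s$, matching the previous expression; the third identity is handled analogously.

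The main obstacle is purely bookkeeping: tracking signs in the action of $\tau^{\pm w_s}$ on test functions and verifying that Fubini applies in the distributional framework. Both difficulties dissolve once one observes that for $K\in C^\infty_c$, the convolution $K\ast b$ is a smooth function even when $b\in\cD'$, reducing the distributional statement to the pointwise one through the duality pairing.
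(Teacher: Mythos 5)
Your proof is correct, and it follows the standard route (differentiate/test by duality for (i), Fubini plus the identity $\tau^{-w_s}K(x-\cdot)=K(x+w_s-\cdot)$ for (ii)), which is exactly the argument the paper defers to, since it omits the proof and cites Section~3.1 of \cite{galeati2020noiseless}. The only cosmetic caveat is that when $\partial_i b\notin E$ the identity in (i) should simply be read in $\mathcal{D}'(\RR^d)$ (with $T^w\partial_i b$ defined by the same duality formula), not ``as elements of $E$''.
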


We omit the proof, which can be found in Section~3.1 from~\cite{galeati2020noiseless}. Let us mention that Definition~\ref{def: avg field} is fairly elastic and allows to consider also time-dependent $b$; at the same time, its main drawback is that it doesn't allow to quantify the spatial regularity improvement of $T^w b$, compared to the original $b$, as an effect of the averaging procedure and the oscillatory nature of $w$. Nevertheless, if $T^w b$ is known to be regular, it provides efficient ways to approximate it.

\begin{lem}\label{lem: smooth approximation Twb}
Let $b\in E$ for some $E$ as above be such that $T^w b\in C^\gamma_tC^\alpha_x$ for some $\gamma\in (0,1]$ and $\alpha>0$, $(\rho^\eps)_{\eps>0}$ be a family of standard mollifiers and define $b^\eps:=\rho^\eps\ast b$. Then for any  $\delta>0$, $T^wb^\eps\rightarrow T^w b$ in $C^{\gamma-\delta}_tC^{\alpha-\delta}_x$ as $\eps\rightarrow 0$.
\end{lem}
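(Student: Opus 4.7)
The plan is to first collapse the statement to a pure interpolation/mollification fact for the single object $f := T^w b$. By property (ii) of Lemma~\ref{lem: properties of classical averagaed field}, spatial convolution commutes with the averaging operator, so $T^w b^\eps = T^w(\rho^\eps \ast b) = \rho^\eps \ast T^w b$. Setting $g^\eps := \rho^\eps \ast f - f$, the conclusion becomes $g^\eps \to 0$ in $C^{\gamma-\delta}_t C^{\alpha-\delta}_x$ for every small $\delta>0$, whenever $f \in C^\gamma_t C^\alpha_x$.

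The spatial direction is handled in two steps. The standard mollifier estimate on Hölder spaces gives, pointwise in $t$,
\[
\|g^\eps(t)\|_{L^\infty_x} \lesssim \eps^{\alpha\wedge 1}\, \|f(t)\|_{C^\alpha_x} \le \eps^{\alpha\wedge 1}\, \|f\|_{\gamma,\alpha},
\]
while convolution with the probability kernel $\rho^\eps$ is a contraction on $C^\alpha_x$, yielding the uniform bound $\|g^\eps(t)\|_{C^\alpha_x} \le 2 \|f\|_{\gamma,\alpha}$. Applying the Besov interpolation inequality $\|\cdot\|_{C^{\alpha-\delta}_x} \lesssim \|\cdot\|_{L^\infty_x}^{\delta/\alpha}\, \|\cdot\|_{C^\alpha_x}^{1-\delta/\alpha}$ I then obtain $\sup_{t\in [0,T]} \|g^\eps(t)\|_{C^{\alpha-\delta}_x} \lesssim \eps^{(\alpha\wedge 1)\delta/\alpha}\, \|f\|_{\gamma,\alpha} \to 0$.

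To promote this to convergence of the time-Hölder seminorm, I use the analogous real-interpolation inequality on Banach-valued Hölder paths: for any $\delta<\gamma$ and any Banach space $E$,
\[
\lb h\rb_{C^{\gamma-\delta}_t E} \le C\, \|h\|_{C^0_t E}^{\delta/\gamma}\, \lb h\rb_{C^\gamma_t E}^{1-\delta/\gamma},
\]
a standard fact proved by splitting the increment $\|h_{s,t}\|_E$ according to whether $|t-s|$ is small or large and optimizing. Taking $h = g^\eps$ and $E = C^{\alpha-\delta}_x$, the first factor tends to $0$ by the previous paragraph, while the second is uniformly bounded via the continuous embedding $C^\alpha_x \hookrightarrow C^{\alpha-\delta}_x$ together with $\lb g^\eps\rb_{C^\gamma_t C^\alpha_x} \le 2\,\lb f\rb_{C^\gamma_t C^\alpha_x}$. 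Combined with the already-proved convergence $\|g^\eps(0)\|_{C^{\alpha-\delta}_x}\to 0$, this yields the full-norm convergence $\|g^\eps\|_{C^{\gamma-\delta}_t C^{\alpha-\delta}_x}\to 0$.

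There is no genuine obstacle here: the only care needed is bookkeeping that the two interpolation losses (spatial and temporal) can be taken independently and arbitrarily small, plus invoking the two (entirely standard) interpolation inequalities. The commutation identity $T^w b^\eps = \rho^\eps \ast T^w b$ is what makes the whole scheme work, since it lets one bypass the original irregularity of $b$ entirely and reduce to a statement only about the already-regular object $T^w b$.
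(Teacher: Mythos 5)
Your proof is correct and follows essentially the same route as the paper's: commuting mollification with the averaging operator ($T^w b^\eps=\rho^\eps\ast T^w b$), combining the uniform bound in $C^\gamma_t C^\alpha_x$ with uniform convergence of the mollification, and concluding by interpolation. The only difference is presentational: you carry out the interpolation explicitly in two stages (spatial Besov interpolation pointwise in $t$, then interpolation of the time-H\"older seminorm with values in $C^{\alpha-\delta}_x$), whereas the paper simply invokes ``standard interpolation estimates'' between $C([0,T]\times\RR^d)$-convergence and the $C^\gamma_t C^\alpha_x$ bound.
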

\begin{proof}
The lemma is a slight improvement of Lemma~4 from~\cite{galeati2020noiseless}, the only difference being the claim that $T^wb^\eps\rightarrow T^w b$ in $C^{\gamma-\delta}_tC^{\alpha-\delta}_x$ \textit{globally} instead of just locally. As in~\cite{galeati2020noiseless}, thanks to the properties of averaging it holds
\begin{equation*}
    \Vert T^w b^\varepsilon\Vert_{\gamma,\alpha}
    = \Vert \rho^\varepsilon\ast T^w b\Vert_{\gamma,\alpha}
    \leq  \Vert  T^w b\Vert_{\gamma,\alpha}\quad \forall\, \varepsilon>0.
\end{equation*}
Moreover by properties of convolution, we have
\begin{equation*}
    \sup_{(t,x)\in[0,T]\times\RR^d}|(\rho^\varepsilon\ast T^w b) (t,x)-T^w b(t,x)|\lesssim \varepsilon^\alpha \Vert T^w b\Vert_{\gamma,\alpha}\to 0 \quad\text{ as }\varepsilon\to 0
\end{equation*}
i.e. uniform convergence holds. Standard interpolation estimates between the convergence in $C([0,T]\times\RR^d)$ and the uniform bound in $C^\gamma_t C^\alpha_x$ imply the conclusion.
\end{proof}

Another more probabilistic way to construct an averaged field is to consider a given distribution $b\in \mathcal{S}'(\mathbb{R}^d)$ and a continuous $\mathbb{R}^d$-valued stochastic process $(w_t)_{t\in [0,T]}$ on a probability space $(\Omega,\cF,\PP)$. Typically in this setting the goal is to show that $\mathbb{P}$-a.s. $T^w b$ is a well-defined, continuous random field, even if the original $b$ was not. We say that the process $w$ is $\rho$-regularising the distribution $b\in C^\alpha_x$ if $\mathbb{P}$-a.s. $T^w b\in C^\gamma_t C^{\alpha+\rho}_{x,loc}$ for some $\gamma>1/2$ and $\rho>0$.

In this sense, Gubinelli and Catellier proved in~\cite{Catellier2016} that if $b\in C^\alpha_x$ and $w$ is an fBm of parameter $H\in (0,1)$, then $w$ is $\rho$-regularising for any $\rho<1/(2H)$ (the results in~\cite{Catellier2016} actually also establish global estimates for $T^w b$, which require the introduction of suitable weighted H\"older norms similar to those in~\eqref{eq:def of weighted holder space}). Their results have then been extended to other classes of fields $b$, possibly of the form $b\in L^p_t C^\alpha_x$, in Section~7 from~\cite{le2020stochastic} and Section~3.3 from~\cite{galeati2020noiseless}.

Thus choosing a fBm with $H$ very small, the regularity of the associated averaged field $T^wb$ gets better. As the techniques used to prove the regularity of $T^w b$ are a probabilistic nature, the set of $\omega\in \Omega$ for which $T^{w(\omega)}b$ has the desired regularity depends on the given $b$ and cannot in general be chosen to be the same for all possible $b\in C^\alpha_x$. At the same time, it provides sharp estimates, which remarkably do not depend on the dimension of the ambient space $\mathbb{R}^d$.\\

A third approach, which combines analytic and probabilistic techniques, is based on the following observation: for any continuous path $w$, we have
\begin{equation}\label{sec2.1 eq occupation measure}
    T_{s,t}^w b(x) = b\ast \bar{\mu}^w_{s,t}(x), 
\end{equation}
where the measure $\bar{\mu}^w$ denotes the reflection of the occupation measure $\mu^w$, i.e. $\bar{\mu}^w_{s,t}(A):=\mu_{s,t}(-A)$ for any $A\in \cB(\RR^d)$. The occupation measure  $\mu^w$  associated to $w$ is defined as
\begin{equation*}
    \mu_t(A)=\lambda\{s\leq t|\, w_s\in A\}
\end{equation*}
for any Borel set $A\subset \RR^d$, where $\lambda$ denotes the Lebesgue measure on $[0,T]$. We say that $w$ admits a local time if $\mu^w$ is absolutely continuous w.r.t. the Lebesgue measure on $\mathbb{R}^d$, in which case the local time $L^w$ is exactly the density of $\mu^w$. Namely, it is the only non-negative element of $L^1(\mathbb{R}^d)$ such that
\begin{equation*}
    \mu_t(A)=\int_A L_t(z)\dd z \quad \forall\, \, A\in \mathcal{B}(\mathbb{R}^d).
\end{equation*}
In this case $T^w b= b\ast \bar{L}^w_t$ where $\bar{L}_t(x):=L_t(-x)$ and in order to show its regularity improvement, it suffice to establish the joint space-time regularity of the map $(t,x)\mapsto L^w_t(x)$. This line of approach was first explored in~\cite{Catellier2016}, via the notion of $\rho$-irregularity; the study of the joint space-time regularity of $L^w$ is however a topic of independent interest which has received a lot of attention, see~\cite{GemHoro} for a review.

It is shown in \cite[Thm. 17]{harang2020cinfinity} that if a Gaussian process $w:[0,T]\times \Omega \rightarrow \RR^d$ satisfies the following local nondeterminism condition for some $\zeta\in (0,2)$
\begin{equation*}
    \inf_{t>0} \inf_{s\in [0,t]} \inf_{z\in \RR^d;\, |z|=1} \frac{z^t {\rm Var}(w_t|\cF_s) z}{(t-s)^\zeta}>0,
\end{equation*}
then $\PP$-a.s. the local time $L^w$ is contained in the space $C^\gamma_t H^k$  for some 
\[
\gamma>\frac{1}{2},\quad k<\frac{1}{2\xi}-\frac{d}{2},
\]
where $H^k$ denotes the $L^2$-based Sobolev space. This result, combined with the relation \eqref{sec2.1 eq occupation measure}, allows to establish a regularising effect for all possible $b$ in a suitable class. Namely, if we denote by $\Omega'\subset \Omega$ the set of full measure where $L^w$ has the desired regularity, then by an application of Young's convolution inequality, we obtain that
\[ \Vert T^{w(\omega)} b\Vert_{C^\gamma_t C^{\beta+k}_x} \lesssim \Vert b\Vert_{H^\beta} \Vert L^{w(\omega)}\Vert_{C^\gamma_t H^k_x} \quad \forall\, b\in H^\beta_x\]
for all $\omega\in \Omega'$. In this case the regularity improvement holds on a set of full probability which is independent of the choice of $b\in H^\beta$. We can view $T^w$ as a (random) continuous linear operator from $H^\beta$ to $C^\gamma_t C^{k+\beta}_x$; in this sense we can call it an averaging operator.

The main drawback of this approach is that in general the regularity improvement will depend heavily on the dimension $d$ of the ambient space $\mathbb{R}^d$; for instance if $w$ is sampled as a Brownian motion, then its local time $L^w$ exists only for $d=1$, making the reasoning not applicable for $d\geq 2$. On the other hand, the aforementioned results for the averaged field $T^w b$ still provide a regularisation effect of order $\rho\sim 1$. For this reason in this article we will mostly refrain from considering the operator $T^w$, but rather only assume to be working with an averaged field $T^w b$ of suitable regularity.\\

Let us finally mention that in the papers \cite{galeati2020noiseless,galeati2020prevalence}, Gubinelli and one of the authors showed that the regularity properties of $T^w b$ (resp. $L^w$) in fact  hold for almost all continuous paths (in the sense of prevalence), see Theorem~1 from~\cite{galeati2020noiseless}. This largely speaks to the generality that is obtained through considerations of averaged fields in connection with ODEs, as in principle one does not impose any statistical assumption on the perturbation $w$. For instance, the results from~\cite{galeati2020noiseless} can be combined with our results, Theorems~\ref{main thm2} and~\ref{main thm4}, to deduce that generic perturbations $w$ regularise multiplicative SDEs driven by fBm,

\subsection{Non-linear Young integration and equations}\label{sec:Non-linear Young integration and equations}

We recall in this section some of the main results on the theory of abstract nonlinear Young differential equations, which is by now a well understood topic, see~\cite{Catellier2016,hu2017nonlinear,harang2020cinfinity,galeati2020noiseless}.

We start by introducing the class of vector fields $A:[0,T]\times\mathbb{R}^d\to\mathbb{R}^d$ we will work with; from now on, whenever $A$ appears, it will be implicitly assumed that $A(0,x)=0$ for all $x$. We also adopt the incremental notation $A_{s,t}(x)=A(t,x)-A(s,x)$.

\begin{defn}
We say that $f\in C(\mathbb{R}^d;\mathbb{R}^d)$ belongs to $C^\eta_{x,loc}$ for $\eta\in (0,1)$ if the following quantities are finite for any $R>0$:
\begin{equation*}
    \llbracket f\rrbracket_{\eta,R}:=\sup_{x, y\in B_R;\, x\neq y} \frac{| f(x)-f(y)|}{ |x-y|^\eta},
    \quad
    \Vert f\Vert_{\eta,R}:= \llbracket f\rrbracket_{\eta,R} + \sup_{x\in B_R} |f(x)|.
\end{equation*}
Given $A\in C([0,T]\times\mathbb{R}^d;\mathbb{R}^d)$, we say that $A\in C^\gamma_t C^\eta_{x,loc}$ for $\gamma,\eta\in (0,1)$ if similarly, for any $R>0$, it holds
\begin{equation*}
    \llbracket A\rrbracket_{\gamma,\eta,R}:=
    \sup_{0\leq s<t\leq T} \frac{\llbracket A_{s,t}\rrbracket_{\eta,R}}{|t-s|^\gamma}<\infty, 
    \quad
    \Vert A\Vert_{\gamma,\eta,R}:=
    \sup_{0\leq s<t\leq T} \frac{\Vert A_{s,t}\Vert_{\eta,R}}{|t-s|^\gamma}<\infty.
\end{equation*}
$A^n\to A$ in $C^\gamma_t C^\eta_{x,loc}$ if $\Vert A^n-A\Vert_{\gamma,\eta,R}\to 0$ as $n\to \infty$ for any $R\geq 0$; $A\in C^\gamma_t C^{n+\eta}_{x,loc}$ if $A$ admits spatial derivatives up to order $n$ and $D_x^k A\in C^\gamma_t C^\eta_{x,loc}$ for any $k\leq n$.\end{defn}

Given $A$ as above, we can define the non-linear Young integral of $A$ along a curve $\theta$.

\begin{thm}\label{secyoung thm existence young integral}
Let $A\in C^\gamma_t C^\eta_{x,loc}$ and $\theta\in C^\nu_t$ with $\gamma+\eta\nu>1$. Then the following limit exists and is independent of the choice of partitions $\cP$ of $[0,T]$ with infinitesimal mesh:
\begin{equation*}
\int_0^T A(\mathd u,\theta_u)=\lim_{|\Pi|\to 0} \sum_i A_{t_i,t_{i+1}}(\theta_{t_i})
\end{equation*}
We say that $\int_0^T A(\mathd u,\theta_u)$ is a {\em non-linear Young integral}.
More generally, the construction holds for any subinterval $[s,t]\subset [0,T]$ and allows to define a map $t\mapsto \int_0^t A(\mathd u,\theta_u)$ with the following properties:
\begin{itemize}
\item[i.] $\int_0^s A(\mathd u,\theta_u) + \int_s^t A(\mathd u,\theta_u)=\int_0^t A(\mathd u,\theta_u)$ for all $0\leq s\leq t\leq T$.
\item[ii.] $\int_0^\cdot A(\mathd u,\theta_u)\in C^\gamma_t$ and there exists a constant $C=C(\gamma,\gamma+\eta\nu,T)$ such that, taking $R=\Vert \theta\Vert_{\infty}$, it holds
\begin{equation*}
\Big| \int_s^t A(\mathd u,\theta_u) - A_{s,t}(\theta_s)\Big|
\leq C |t-s|^{\gamma+\eta\nu} \llbracket A\rrbracket_{\gamma,\beta,R} \llbracket \theta \rrbracket_\nu^\eta,
\end{equation*}
\begin{equation*}
\Big\Vert \int_0^\cdot A(\mathd u,\theta_u) \Big\Vert_{\gamma}
\leq C \Vert A\Vert_{\gamma,\eta,R} (1+\llbracket \theta \rrbracket_\nu^\eta).
\end{equation*}
\item[iii.] If in addition $\partial_t A$ exists and is continuous, then $\int_0^\cdot A(\mathd u,\theta_u) = \int_0^\cdot \partial_u A(u,\theta_u) \mathd u$.
\item[iv.] The map from $C^\gamma_t C^\eta_{x,loc}\times C^\nu_t \to C^\gamma_t$ given by $(A,\theta)\mapsto \int_0^\cdot A(\mathd u,\theta_u)$ is linear in $A$ and continuous in both variables (in the respective topologies).
\end{itemize}
\end{thm}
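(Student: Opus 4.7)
\textbf{Strategy.} The natural approach is to apply the sewing lemma of Gubinelli to the germ $\Xi_{s,t} := A_{s,t}(\theta_s)$. The construction of the integral and items (i)--(ii) will follow directly from sewing; items (iii)--(iv) will be handled separately by standard density and continuity arguments. The main (and only real) technical step is the verification of the sewing hypothesis, which is a short computation.

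\textbf{Step 1: Sewing the germ.} Fix $[s,t]\subset[0,T]$ with $R:=\Vert \theta\Vert_\infty$. For $s\leq u\leq t$, we compute
\begin{equation*}
\delta \Xi_{s,u,t} := \Xi_{s,t}-\Xi_{s,u}-\Xi_{u,t}
= A_{s,t}(\theta_s)-A_{s,u}(\theta_s)-A_{u,t}(\theta_u)
= A_{u,t}(\theta_s)-A_{u,t}(\theta_u),
\end{equation*}
so that, using $A\in C^\gamma_t C^\eta_{x,loc}$ and $\theta\in C^\nu_t$,
\begin{equation*}
|\delta \Xi_{s,u,t}|
\leq \llbracket A_{u,t}\rrbracket_{\eta,R}\,|\theta_u-\theta_s|^\eta
\leq \llbracket A\rrbracket_{\gamma,\eta,R}\,\llbracket \theta\rrbracket_\nu^{\eta}\,|t-s|^{\gamma+\eta\nu}.
\end{equation*}
Since $\gamma+\eta\nu>1$, the sewing lemma yields the existence of a unique map $(s,t)\mapsto \int_s^t A(\mathd u,\theta_u)$ which is the limit of the Riemann--Stieltjes-type sums $\sum_i A_{t_i,t_{i+1}}(\theta_{t_i})$ along any partition of mesh tending to zero, satisfies additivity (i), and obeys
\begin{equation*}
\Bigl|\int_s^t A(\mathd u,\theta_u)-A_{s,t}(\theta_s)\Bigr|
\leq C\,\llbracket A\rrbracket_{\gamma,\eta,R}\,\llbracket \theta\rrbracket_\nu^{\eta}\,|t-s|^{\gamma+\eta\nu}
\end{equation*}
with $C=C(\gamma+\eta\nu)$. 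This is the first inequality in (ii); the second follows by triangle inequality together with $|A_{s,t}(\theta_s)|\leq \Vert A\Vert_{\gamma,\eta,R}|t-s|^\gamma$, yielding $\gamma$-H\"older regularity of $t\mapsto \int_0^t A(\mathd u,\theta_u)$ with the claimed bound.

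\textbf{Step 2: Consistency when $\partial_tA$ is continuous (iii).} If $\partial_tA$ exists and is continuous on $[0,T]\times\mathbb{R}^d$, then $A_{t_i,t_{i+1}}(\theta_{t_i})=\int_{t_i}^{t_{i+1}}\partial_uA(u,\theta_{t_i})\,\mathd u$, and uniform continuity of $\partial_uA$ on a compact set containing the graph of $\theta$ shows that the Riemann sums converge to $\int_s^t \partial_uA(u,\theta_u)\,\mathd u$. By uniqueness of the sewing limit, both integrals coincide.

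\textbf{Step 3: Linearity and continuity (iv).} Linearity in $A$ is obvious from the defining Riemann sums. For continuity, given $(A^n,\theta^n)\to(A,\theta)$ in $C^\gamma_tC^\eta_{x,loc}\times C^\nu_t$, the germs $\Xi^n_{s,t}=A^n_{s,t}(\theta^n_s)$ satisfy uniform sewing bounds on the relevant ball $B_R$ (with $R$ independent of large $n$), and $\Xi^n\to \Xi$ uniformly on compacts. A standard stability statement for the sewing map --- essentially rerunning Step~1 on $\Xi^n-\Xi$ and using the linearity of $\delta$ --- gives convergence of $\int_0^\cdot A^n(\mathd u,\theta^n_u)$ to $\int_0^\cdot A(\mathd u,\theta_u)$ in $C^\gamma_t$.

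\textbf{Expected difficulty.} There is essentially no deep obstacle: the whole result is a direct corollary of sewing once the germ $\Xi$ is identified, and the key algebraic cancellation $A_{s,t}(\theta_s)-A_{s,u}(\theta_s)=A_{u,t}(\theta_s)$ produces exactly the regularity exponent $\gamma+\eta\nu>1$ needed. The only mild subtlety is keeping track of the local H\"older radius $R$ so that the bounds in (ii) are expressed in terms of $\Vert\theta\Vert_\infty$ rather than just some ambient constant, and ensuring in (iv) that along a convergent sequence $(\theta^n)$ the radii $R^n$ remain uniformly bounded.
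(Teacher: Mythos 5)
The paper itself gives no proof of this theorem: it is recalled from the cited literature (Catellier--Gubinelli, Hu--L\^e), where the standard argument is precisely the one you give, namely the sewing lemma applied to the germ $\Xi_{s,t}=A_{s,t}(\theta_s)$. Your Steps 1 and 2 are correct and complete in the essentials: the cancellation $\delta\Xi_{s,u,t}=A_{u,t}(\theta_s)-A_{u,t}(\theta_u)$, the bound $\llbracket A\rrbracket_{\gamma,\eta,R}\llbracket\theta\rrbracket_\nu^\eta|t-s|^{\gamma+\eta\nu}$ with $R=\Vert\theta\Vert_\infty$, and the consequences (i)--(iii) all come out exactly as in the references.

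The one place where your sketch does not literally close is Step 3. ``Rerunning Step 1 on $\Xi^n-\Xi$'' gives a \emph{uniform} bound on $\delta(\Xi^n-\Xi)$, not smallness, and the problematic piece is the cross term
\begin{equation*}
A_{u,t}(\theta^n_s)-A_{u,t}(\theta^n_u)-A_{u,t}(\theta_s)+A_{u,t}(\theta_u),
\end{equation*}
which cannot be made small while keeping the full coherence exponent $\gamma+\eta\nu$, since $A_{u,t}$ is only $C^\eta$ in space and second differences are not directly controlled. The standard fix is an interpolation: bound this term both by $2\llbracket A\rrbracket_{\gamma,\eta,R}|t-u|^\gamma\Vert\theta^n-\theta\Vert_\infty^\eta$ and by $\llbracket A\rrbracket_{\gamma,\eta,R}|t-u|^\gamma(\llbracket\theta^n\rrbracket_\nu^\eta+\llbracket\theta\rrbracket_\nu^\eta)|u-s|^{\eta\nu}$, and interpolate with a weight $\lambda\in(0,1)$ chosen so that $\gamma+\lambda\eta\nu>1$; then the coherence norm of $\Xi^n-\Xi$ at exponent $\gamma+\lambda\eta\nu$ tends to zero while the germ difference is $O(|t-s|^\gamma)$ with vanishing constant, and the sewing stability estimate yields convergence in $C^\gamma_t$. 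With this refinement spelled out, your proof matches the standard one in full.
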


We can then pass to define the non-linear Young differential equation (YDE) associated to a drift $A\in C^\gamma_t C^\eta_{x,loc}$.  

\begin{defn} Let  $A$ be given as in Theorem \ref{secyoung thm existence young integral}.  We say that $\theta\in C^\nu_t$ is a solution starting at $\theta_0\in\RR^d$ to the nonlinear YDE
\begin{equation}\label{secyoung defn YDE 1}
\mathd \theta_t = A(\mathd t,\theta_t)
\end{equation}
if $\gamma+\eta\nu>1$ and $\theta$ satisfies
\begin{equation}\label{secyoung defn YDE 2}
\theta_t = \theta_0+\int_0^t A(\mathd u,\theta_u) \quad \forall\, t\in [0,T].
\end{equation}
\end{defn}

In order to provide a global solution theory, local bounds on $A$ are not enough and suitable growth conditions must be introduced.

\begin{defn}\label{defn: weighted holder}
For $\eta,\lambda\in (0,1)$, we define the weighted H\"older space $C^{\eta,\lambda}_x=C^{\eta,\lambda}(\mathbb{R}^d;\mathbb{R}^d)$ as the collection of all fields $f\in C^\eta_{x,loc}$ such that
\begin{equation*}
    \Vert f \Vert_{\eta,\lambda} := |f(0)|+\sup_{R\geq 1}  R^{-\lambda}\, \llbracket f\rrbracket_{\eta,R}<\infty.
\end{equation*}
$C^{\eta,\lambda}_x$ is a Banach space with the norm $\Vert\cdot \Vert_{\eta,\lambda}$; similar definitions hold for $C^{n+eta,\lambda}_x$, $n\in \NN$.
\end{defn}

\begin{defn}
We say that $A\in C^\gamma_t C^\eta_x$ if it satisfies global bounds, namely if
\begin{equation*}
    \llbracket A \rrbracket_{\gamma,\eta}:=
    \sup_{0\leq s<t\leq T} \frac{\llbracket A_{s,t}\rrbracket_\eta}{|t-s|^\gamma}<\infty,
    \quad
    \Vert A\Vert_{\gamma,\eta}:=
    \sup_{0\leq s<t\leq T} \frac{\Vert A_{s,t}\Vert_\eta}{|t-s|^\gamma}<\infty.
\end{equation*}
where $\llbracket\cdot\rrbracket_\eta$, $\Vert \cdot\Vert_{\eta}$ denote the classical Besov-H\"older seminorm and norm of $C^\eta(\mathbb{R}^d;\mathbb{R}^d)$ respectively. Similarly, $A\in C^\gamma_t C^{\eta,\lambda}_x$ for $\gamma,\eta,\lambda\in (0,1)$ if
\begin{equation*}
    \Vert A\Vert_{\gamma,\eta,\lambda}:=
    \sup_{0\leq s<t\leq T} \frac{\Vert A_{s,t}\Vert_{\eta,\lambda}}{|t-s|^\gamma}<\infty.
\end{equation*}
Observe that $C^\gamma_t C^{\eta,\lambda}_x$ is a Banach space endowed with the norm $\Vert \cdot\Vert_{\gamma,\eta,\lambda}$. The definitions for $C^\gamma_t C^{n+\eta}_x$ and $C^\gamma_t C^{n+\eta,\lambda}_x$ are analogue.
\end{defn}

\begin{rem}
Although the quantities $\Vert \cdot\Vert_{\gamma,\eta,R}$ and  $\Vert \cdot\Vert_{\gamma,\eta,\lambda}$ are related, since the latter measures how the first grows as a function of $R$, we ask the reader to keep in mind that they represents two different quantities.
Throughout the text $R\geq 0$ will always denote the radius of a ball $B(0,R)\subset \RR^d$ centered at zero, and so $\Vert \cdot\Vert_{\gamma,\eta,R}$ denotes the H\"older norm restricted to $[0,T]\times B(0,R)$; instead the parameter $\lambda\in (0,1)$ will be consistently used in relation to the weighted H\"older space $C^{\eta,\lambda}_x$. We believe that the exact meaning of the norm will always be clear from the context. 
\end{rem}

Observe that for $A\in C^\gamma_t C^{\eta,\lambda}_x$ we have an upper bound on the growth of $A_{s,t}$ at infinity. Indeed, for any $x\in\mathbb{R}^d$ such that $|x|\geq 1$, it holds
\[ |A_{s,t}(x)| \leq |A_{s,t}(x)-A_{s,t}(0)| + |A_{s,t}(0)|
\leq \llbracket A \rrbracket_{\gamma,\eta,\lambda} |t-s|^\gamma |x|^{\eta+\lambda} +\Vert A\Vert_{\gamma,\eta,\lambda} |t-s|^\gamma.
\]
In particular, if $\eta+\lambda\leq 1$, then $A_{s,t}$ has at most linear growth.\\

The following theorem gives sufficient conditions for well-posedness of the YDE associated to $A$, as well as existence and regularity of the associated flow.

\begin{thm}\label{secyoung thm existence and uniquness of YDE}
Suppose $A\in C^\gamma_t C^{\eta,\lambda}_x$ for some $\gamma, \eta,\lambda\in (0,1)$ such that $\gamma>1/2$, $\gamma(1+\eta)>1$ and $\eta+\lambda\leq 1$. Then for any $\theta_0\in \RR^d$ there exists a solution $\theta\in C^\gamma_t$ to the YDE \eqref{secyoung defn YDE 1} starting from $\theta_0$, as well as a constant $C=C(\gamma,\eta,T)$ such that
\begin{equation}\label{secyoung a priori estimate}
\Vert \theta\Vert_\gamma \leq C \exp( C \Vert A\Vert^2_{\gamma,\eta,\lambda}) (1+|\theta_0|).
\end{equation}
If $A\in C^\gamma_t C^{\eta,\lambda}_x\cap C^\gamma_t C^{1+\eta}_{x,loc}$, such solution is unique and the YDE admits a $C^\gamma_t C^1_{x,loc}$ flow.
Finally, if $A\in C^\gamma_t C^{n+\eta}_{x,loc}$, then the flow belongs to $C^\gamma_t C^n_{x,loc}$.
\end{thm}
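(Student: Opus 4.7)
The plan is to produce the solution as a limit of solutions of classical ODEs. I would mollify $A$ in the time variable to obtain $A^\varepsilon \in C^1_t C^{\eta,\lambda}_x$ with $\Vert A^\varepsilon \Vert_{\gamma,\eta,\lambda} \leq \Vert A\Vert_{\gamma,\eta,\lambda}$ uniformly in $\varepsilon$. Since $\eta+\lambda \leq 1$, the weighted H\"older bound forces $\partial_t A^\varepsilon(t,\cdot)$ to have at most linear growth in $x$, so the classical ODE $\dot\theta^\varepsilon_t = \partial_t A^\varepsilon(t,\theta^\varepsilon_t)$ admits a global continuous solution $\theta^\varepsilon$ for every initial point. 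By part (iii) of Theorem~\ref{secyoung thm existence young integral}, $\theta^\varepsilon$ solves the nonlinear YDE driven by $A^\varepsilon$. A uniform $C^\gamma_t$ bound on $\{\theta^\varepsilon\}$ (established in the next step) then allows a compactness extraction via Arzel\`a–Ascoli, and the continuity of the nonlinear Young integral in both arguments (Theorem~\ref{secyoung thm existence young integral}(iv)) lets me pass to the limit in the integral identity, producing a $C^\gamma_t$ solution to the YDE driven by $A$.

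\textbf{A priori estimate.} Fix a solution $\theta$ and, for an interval $[s,t]\subset [0,T]$ with $|t-s|\leq \tau$, set $R_s:=\sup_{u\in[0,s]}|\theta_u|$. Combining the sewing estimate in Theorem~\ref{secyoung thm existence young integral}(ii) with the weighted H\"older control
\[
|A_{s,t}(\theta_s)|\leq C\,\Vert A\Vert_{\gamma,\eta,\lambda}\bigl(1+|\theta_s|^{\eta+\lambda}\bigr)|t-s|^\gamma
\]
and dividing through by $|t-s|^\gamma$ yields, after using $\eta+\lambda\leq 1$,
\[
\llbracket\theta\rrbracket_{\gamma,[s,t]}\leq C\Vert A\Vert_{\gamma,\eta,\lambda}(1+R_s)+C\tau^{\gamma\eta}\Vert A\Vert_{\gamma,\eta,\lambda}(1+R_s)^{\lambda}\llbracket\theta\rrbracket_{\gamma,[s,t]}^{\eta}.
\]
Choosing $\tau$ of order $\Vert A\Vert_{\gamma,\eta,\lambda}^{-1/(\gamma\eta)}$ (up to absolute constants) makes the last term absorbable into the left-hand side, leaving $\llbracket\theta\rrbracket_{\gamma,[s,t]}\lesssim \Vert A\Vert(1+R_s)$ on such small intervals. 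Concatenating over $\sim T\Vert A\Vert^{1/(\gamma\eta)}$ subintervals produces the claimed bound $\Vert\theta\Vert_\gamma\leq C\exp(C\Vert A\Vert^2_{\gamma,\eta,\lambda})(1+|\theta_0|)$, the squared exponent being the outcome of iterating over a number of intervals that itself grows polynomially in $\Vert A\Vert$.

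\textbf{Uniqueness and flow.} For two solutions $\theta^1,\theta^2$ starting from the same point $\theta_0$, the a priori estimate confines both trajectories to a common ball $B_R\subset\RR^d$, where the extra assumption $A\in C^\gamma_tC^{1+\eta}_{x,loc}$ gives $\nabla A\in C^\gamma_tC^\eta_{x,loc}$ with controlled norms. Writing
\[
\theta^1_t-\theta^2_t=\int_0^t\bigl[A(\mathd u,\theta^1_u)-A(\mathd u,\theta^2_u)\bigr]
\]
and linearizing via a Taylor expansion of $A$ in the spatial variable, the increment of the integrand is governed by $\nabla A$ evaluated along the trajectories, yielding a Young-type sewing estimate whose remainder can be absorbed to give $\sup_{u\leq t}|\theta^1_u-\theta^2_u|\leq C\int_0^t\cdots$, and a Young–Gr\"onwall lemma of the form used in~\cite{Catellier2016} forces $\theta^1\equiv\theta^2$. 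Continuity of the flow in the initial datum follows from the very same computation applied to the difference of flows. For $C^1_{x,loc}$ regularity, I would formally differentiate in $\theta_0$ to obtain the linearized YDE
\[
J_t=I+\int_0^t\nabla A(\mathd u,\theta_u(\theta_0))\,J_u,
\]
which, as a YDE in the entries of $J$ with drift of regularity $C^\gamma_tC^\eta_{x,loc}$, is again covered by Theorem~\ref{secyoung thm existence young integral}; a standard difference-quotient argument identifies $J$ with the actual Jacobian. Higher regularity $C^n_{x,loc}$ under $A\in C^\gamma_tC^{n+\eta}_{x,loc}$ is obtained by iterating this differentiation, each derivative satisfying a YDE with driver in $C^\gamma_tC^\eta_{x,loc}$.

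\textbf{Main obstacle.} The delicate point is the interplay between the global growth condition $\eta+\lambda\leq 1$ and the Young sewing bound: the latter is not translation-invariant in space, so the weighted H\"older assumption has to be exploited exactly to turn $R^{\eta+\lambda}$ into a linear factor that closes under iteration. The uniqueness step is then the second subtle point, since $\nabla A$ is only locally H\"older, and one must first trap both solutions in a fixed ball by means of the a priori estimate before running the Gr\"onwall argument.
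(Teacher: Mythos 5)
Your proposal is correct in outline, but it takes a different route from the paper: the paper does not reprove these facts at all. Its proof is a short reduction-plus-citation argument: existence and the a priori bound \eqref{secyoung a priori estimate} are quoted from Theorem~3.1 of \cite{hu2017nonlinear} (see also Theorem~2.9 of \cite{Catellier2016}); then, since \eqref{secyoung a priori estimate} is uniform over initial data in a bounded ball, a localization argument (truncating $A$ outside a large ball, as in \cite{Catellier2016,galeati2020noiseless}) reduces to the case of globally $C^\gamma_t C^{1+\eta}_x$ (resp.\ $C^\gamma_t C^{n+\eta}_x$) drift, after which uniqueness, the $C^1_{x,loc}$ flow and the higher regularity are quoted from Theorem~3.5 of \cite{hu2017nonlinear}, Theorems~16--18 of \cite{galeati2020noiseless} and Theorem~2 of \cite{harang2020cinfinity}. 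What you do instead is reconstruct the content of those references from scratch: time-mollification plus Theorem~\ref{secyoung thm existence young integral}(iii)--(iv) and Arzel\`a--Ascoli for existence, an interval-splitting sewing estimate for the a priori bound, and trapping both solutions in a ball followed by linearization for uniqueness and the flow. This is essentially the same mathematics, packaged as a self-contained proof rather than a localization-and-citation; your "trap in a ball" step plays exactly the role of the paper's localization, and your linearization is the affine-YDE reduction underlying the paper's own Theorem~\ref{secyoung comparison theorem}.

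Two points in your sketch need more care than you give them. First, the exponent $2$ in \eqref{secyoung a priori estimate} does not follow from the bare statement "iterate over polynomially many intervals": with $\tau\sim \Vert A\Vert_{\gamma,\eta,\lambda}^{-1/(\gamma\eta)}$ you have $N\sim T\Vert A\Vert^{1/(\gamma\eta)}$ intervals, and if each step multiplied the running bound by a fixed constant you would only get $\exp(C\Vert A\Vert^{1/(\gamma\eta)})$, which can be worse than $\exp(C\Vert A\Vert^{2})$ (e.g.\ $\gamma=0.9$, $\eta=0.2$ gives $1/(\gamma\eta)>5$). The bound closes because the per-interval growth factor is only $1+C\tau^\gamma\Vert A\Vert=1+C\Vert A\Vert^{1-1/\eta}$, so the total exponent is $\tfrac{1}{\gamma\eta}+1-\tfrac{1}{\eta}=1+\tfrac{1-\gamma}{\gamma\eta}<2$ precisely thanks to $\gamma(1+\eta)>1$; you should also remember that the relevant radius in the sewing term is $\sup_{[s,t]}|\theta|$, not $\sup_{[0,s]}|\theta|$, and absorb the extra $\tau^{\gamma(\eta+\lambda)}\llbracket\theta\rrbracket^{\eta+\lambda}$ contribution using $\eta+\lambda\le 1$. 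Second, in the uniqueness step the inequality "$\sup_{u\le t}|\theta^1_u-\theta^2_u|\le C\int_0^t\cdots$" is not available for Young integrals in that literal Gr\"onwall form; the correct implementation (and the one used in the references and in Theorem~\ref{secyoung comparison theorem}) is to show that $v=\theta^1-\theta^2$ solves the affine YDE $\mathd v_t=v_t\cdot \mathd V_t$ with $V\in C^\gamma_t$ built from $\nabla A$ along the trajectories, and then invoke uniqueness/estimates for affine YDEs. With these adjustments your argument is a faithful, more self-contained version of what the paper obtains by citation.
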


\begin{proof}
The existence of a global solution under the condition $C^\gamma_t C^{\eta,\lambda}_x$, together with the a priori estimate~\eqref{secyoung a priori estimate}, 
follows from Theorem~3.1 from~\cite{hu2017nonlinear} (see also Theorem~2.9 from~\cite{Catellier2016}). Since estimate~\eqref{secyoung a priori estimate} is uniform over all possible $\theta_0$ in a bounded ball, we can apply localization arguments (see Remark~2.10 and Section~2.3 from~\cite{Catellier2016}, as well as Remark~14 from~\cite{galeati2020noiseless}) and assume wlog $A\in C^\gamma_t C^{1+\eta}_x$ (resp. $C^\gamma_t C^{n+\eta}_x$); uniqueness and $C^\gamma_t C^1_x$-regularity of the flow are then consequences of Theorem~3.5 from~\cite{hu2017nonlinear} (see also Theorems~16 and 17 from~\cite{galeati2020noiseless} or Proposition~28 from~\cite{harang2020cinfinity}). Finally, higher regularity follows from Theorem~2 from~\cite{harang2020cinfinity} of equivalently Theorem~18 from~\cite{galeati2020noiseless}.
\end{proof}

In order to compare solutions associated to different data $(\theta_0,A)$, a general methodology based on Comparison Principles was introduced in~\cite{Catellier2016}. The version given here is based on Theorem~9 from~\cite{galeati2020noiseless}.

\begin{thm}\label{secyoung comparison theorem}
Let $R, M>0$, $A^i\in C^\gamma_t C^{1+\eta,\lambda}_x$ for some $\gamma,\eta,\lambda$ as in Theorem \ref{secyoung thm existence and uniquness of YDE}. Suppose $\Vert A^i\Vert_{\gamma,1+\eta,\lambda}\leq M$, $\vert \theta_0^i\vert\leq R$ for $i=1,2$, and denote by $\theta^i$ the unique solution associated to $(A^i,\theta_0^i)$. Then there exists a constant $C=C(\gamma,\eta,T,R,M)$, increasing in the last two variables, such that
\begin{equation}\label{secyoung estimate comparison theorem}
\Vert \theta^1-\theta^2\Vert_\gamma \leq C\big(|\theta_0^1-\theta_0^2| + \Vert A^1-A^2\Vert_{\gamma,1+\eta,\lambda}\big).
\end{equation}
\end{thm}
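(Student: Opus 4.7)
The plan is to adapt the standard Young-type Gronwall argument to simultaneously handle perturbations of both the driver and the initial condition. First, the a priori estimate~\eqref{secyoung a priori estimate} together with $\|A^i\|_{\gamma,1+\eta,\lambda} \leq M$ and $|\theta^i_0| \leq R$ yields $\|\theta^i\|_\gamma \leq K(R,M)$ for $i=1,2$, so both trajectories remain inside a fixed ball $B_{R'}\subset\mathbb{R}^d$ with $R'=R'(R,M,T)$ throughout $[0,T]$. Consequently all subsequent local norms of $A^1$ and $A^2$ can be evaluated on this compact region, and their weighted-norm dependence on $R'^\lambda$ is absorbed into constants denoted $C_M$.

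Setting $\psi_t := \theta^1_t - \theta^2_t$, the natural decomposition reads, for $0 \leq s \leq t \leq T$,
\begin{equation*}
\psi_t - \psi_s = \int_s^t (A^1-A^2)(\mathd u,\theta^1_u) + \int_s^t \bigl[A^2(\mathd u,\theta^1_u) - A^2(\mathd u,\theta^2_u)\bigr].
\end{equation*}
The first integral is controlled by $C_M |t-s|^\gamma \|A^1-A^2\|_{\gamma,1+\eta,\lambda}$ via part~(ii) of Theorem~\ref{secyoung thm existence young integral} applied to $A^1-A^2$ along the path $\theta^1$. The second integral I would analyze using the Sewing Lemma with germ $\Xi_{s,t} := A^2_{s,t}(\theta^1_s) - A^2_{s,t}(\theta^2_s)$, whose coboundary reads
\begin{equation*}
\delta\Xi_{s,u,t} = \bigl[A^2_{u,t}(\theta^1_s) - A^2_{u,t}(\theta^2_s)\bigr] - \bigl[A^2_{u,t}(\theta^1_u) - A^2_{u,t}(\theta^2_u)\bigr].
\end{equation*}
A first-order Taylor expansion of $A^2_{u,t}$ (using the spatial $C^{1+\eta}$-regularity of $A^2$ on $B_{R'}$, which holds since $A^2\in C^\gamma_t C^{1+\eta,\lambda}_x$) writes the above as a difference of integrals in $r\in[0,1]$ of $\nabla A^2_{u,t}$ evaluated at convex combinations of $\theta^1,\theta^2$; splitting this into a term exploiting $\eta$-H\"older continuity of $\nabla A^2_{u,t}$ and a term using the $\gamma$-H\"older increment $\psi_s-\psi_u$, and using the uniform bound $\|\theta^i\|_\gamma\leq K_M$, produces the estimate
\begin{equation*}
|\delta\Xi_{s,u,t}| \leq C_M |t-s|^{\gamma(1+\eta)} \bigl(\|\psi\|_{\infty,[s,t]} + \phi(s,t)\bigr),
\end{equation*}
where $\phi(s,t) := \llbracket\psi\rrbracket_{\gamma,[s,t]}$. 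Since $\gamma(1+\eta)>1$, sewing gives the same bound for $\int_s^t[A^2(\mathd u,\theta^1_u)-A^2(\mathd u,\theta^2_u)]-\Xi_{s,t}$, and combining with the trivial $|\Xi_{s,t}|\leq C_M|t-s|^\gamma|\psi_s|$ yields the key inequality
\begin{equation*}
\phi(s,t) \leq C_M\bigl(|\psi_s| + \|A^1-A^2\|_{\gamma,1+\eta,\lambda}\bigr) + C_M |t-s|^{\gamma\eta}\bigl(\|\psi\|_{\infty,[s,t]} + \phi(s,t)\bigr).
\end{equation*}

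The closing step is a Gronwall iteration on small intervals. Choose $\tau>0$ depending only on $R,M,T,\gamma,\eta$ so that $C_M\tau^{\gamma\eta}\leq 1/4$ and $C_M\tau^\gamma\leq 1/4$; on any subinterval of length $\leq \tau$ the terms $\phi(s,t)$ and $\|\psi\|_{\infty,[s,t]} \leq |\psi_s| + \tau^\gamma\phi(s,t)$ on the right-hand side can be absorbed into the left, producing $\phi(s,s+\tau) \leq C'_M(|\psi_s| + \|A^1-A^2\|_{\gamma,1+\eta,\lambda})$ and hence $|\psi_{s+\tau}| \leq C''_M|\psi_s| + C''_M\|A^1-A^2\|_{\gamma,1+\eta,\lambda}$. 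Iterating this discrete Gronwall inequality over the $\lceil T/\tau\rceil$ subintervals of $[0,T]$ yields both uniform and H\"older bounds on $\psi$ of the form~\eqref{secyoung estimate comparison theorem}. I expect the main technical obstacle to lie in the sewing bound on $\delta\Xi_{s,u,t}$: the weighted nature of $\|\cdot\|_{\gamma,1+\eta,\lambda}$ forces careful tracking of how the estimates on $\nabla A^2$ depend on $R'$, while the Taylor expansion must be organized so as to simultaneously extract a H\"older factor $|u-s|^{\gamma\eta}$ and a linear $\psi$-dependence, rather than merely a crude bound in terms of $\|\theta^1\|_\infty + \|\theta^2\|_\infty$.
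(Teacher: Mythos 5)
Your argument is correct, but it takes a more hands-on route than the paper. The paper, after the same localization step (use the a priori bound \eqref{secyoung a priori estimate} to confine both trajectories to a ball and trade the weighted norms for local ones), does not estimate the difference directly: it invokes Lemma~6 of \cite{galeati2020noiseless} to write $v=\theta^1-\theta^2$ as the solution of an \emph{affine} classical YDE, $v_t=v_0+\int_0^t v_s\cdot \mathd V_s+\psi_t$, with $V$ the nonlinear Young integral of $\nabla_x A^1$ along convex combinations of $\theta^1,\theta^2$ and $\psi_t=\int_0^t (A^1-A^2)(\mathd s,\theta^2_s)$, and then concludes by citing standard a priori estimates for affine Young equations (Lemma~19 of \cite{galeati2020noiseless}, or \cite{lejay2010controlled}). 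You instead split the increment with $(A^1-A^2)$ evaluated along $\theta^1$ and the $A^2$-difference along the two paths (a symmetric and equally valid choice), run the sewing lemma directly on the germ $\Xi_{s,t}=A^2_{s,t}(\theta^1_s)-A^2_{s,t}(\theta^2_s)$, and close with a small-interval absorption/discrete Gronwall iteration --- which is essentially an inline reproof of the affine-YDE estimate the paper cites. Your coboundary bound is sound: the Taylor split gives one term of order $|t-s|^{\gamma(1+\eta)}\Vert\psi\Vert_{\infty,[s,t]}$ and one of order $|t-s|^{2\gamma}\llbracket\psi\rrbracket_{\gamma,[s,t]}$; you label both with exponent $\gamma(1+\eta)$, which is harmless since $\eta\le 1$ and the time horizon is finite, and both exponents exceed $1$ so sewing applies. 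The trade-off is clear: the paper's proof is shorter because it reuses known lemmas and isolates the structure (the difference solves an affine equation, a fact reused elsewhere), while yours is self-contained at the cost of carrying the sewing and patching estimates explicitly; in your write-up the only points to make fully precise are that the base-point term $|\psi_s|$ in the key inequality should really be $\Vert\psi\Vert_{\infty,[s,t]}$ (your subsequent absorption via $\Vert\psi\Vert_{\infty,[s,t]}\le|\psi_s|+\tau^\gamma\phi(s,t)$ already handles this) and that the final patching over $\lceil T/\tau\rceil$ subintervals, e.g. via Exercise~4.24 of \cite{Friz2014}, produces a constant depending only on $\gamma,\eta,T,R,M$, as required.
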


\begin{proof}
We only sketch the proof as it is almost identical to the one of Theorem~9 from~\cite{galeati2020noiseless}. Thanks to the a priori bound~\eqref{secyoung a priori estimate}, we can localize everything and assume $A^i\in C^\gamma_t C^{1+\eta}_x$ (the localization will produce constants depending on $R$ and $M$ which are incorporated in the final $C$).  It follows from Lemma~6 in~\cite{galeati2020noiseless} that $v:=\theta^1-\theta^2$ satisfies an affine classical YDE of the form
\[ v_t=v_0+\int_0^t v_s\cdot \mathd V_s+\psi_t\]
where
\begin{equation*}
    V_t =\int_0^1 \int_0^t \nabla_x A^1(\mathd s, \theta^2_s +\lambda (\theta^1_s-\theta^2_s)) \mathd \lambda,
    \quad \psi_t = \int_0^t (A^1-A^2)(\mathd s,\theta^2_s).
\end{equation*}
Standard estimates for solutions to affine Young equations are known, see for instance Lemma~19 from~\cite{galeati2020noiseless} or Section~6.2 from~\cite{lejay2010controlled}; by points {\em i.} and {\em ii.} of Theorem \ref{secyoung thm existence young integral}, we can estimate $\psi$ by
\[
\Vert \psi \Vert_\gamma \lesssim \Vert A^1-A^2\Vert_{\gamma,\eta} (1+\llbracket \theta^2\rrbracket_\gamma) \lesssim \Vert A^1-A^2\Vert_{\gamma,\eta}
\]
and the conclusion follows.
\end{proof}

As a nice corollary, we deduce continuous dependence of the flow $\Phi$ on the drift $A$.

\begin{cor}\label{cor: continuous dependence flow map}
Define a map $\mathcal{I}$ on $C^\gamma_t C^{1+\eta,\lambda}_x$ by $A\mapsto \mathcal{I}(A)$, where $\mathcal{I}(A)$ is the flow associated to $A$. Then $\mathcal{I}$ is a continuous map from $C^\gamma_t C^{1+\eta,\lambda}_x$ to $C([0,T]\times\mathbb{R}^d;\mathbb{R}^d)$, the latter being endowed with the topology of uniform convergence on compact sets. As a consequence, to any random field $A$ as above, we can associate a unique random flow $\Phi=\mathcal{I}(A)$.  
\end{cor}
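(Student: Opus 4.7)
The plan is to deduce both assertions directly from Theorem~\ref{secyoung comparison theorem}. Fix $A\in C^\gamma_t C^{1+\eta,\lambda}_x$ and a sequence $(A^n)_{n\in\NN}$ converging to $A$ in this norm. Since a convergent sequence is bounded, one may pick $M<\infty$ with
\[ \Vert A\Vert_{\gamma,1+\eta,\lambda}\vee \sup_{n}\Vert A^n\Vert_{\gamma,1+\eta,\lambda}\le M. \]
For any compact $K\subset\RR^d$, choose $R>0$ such that $K\subset \overline{B_R}$. Applying Theorem~\ref{secyoung comparison theorem} with $\theta_0^1=\theta_0^2=x_0\in\overline{B_R}$ and drifts $A^1=A^n$, $A^2=A$ yields
\[ \Vert \mathcal{I}(A^n)(\cdot,x_0)-\mathcal{I}(A)(\cdot,x_0)\Vert_{\gamma} \le C(\gamma,\eta,T,R,M)\,\Vert A^n-A\Vert_{\gamma,1+\eta,\lambda}, \]
with a constant that depends on $R$ and $M$ but is uniform over all $x_0\in\overline{B_R}$; this uniformity in the initial datum is the crucial feature I am exploiting from the comparison theorem.

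Since the $C^\gamma_t$-norm dominates the uniform norm in time, taking the supremum over $x_0\in K$ and $t\in[0,T]$ produces
\[ \sup_{(t,x)\in[0,T]\times K}\bigl\vert \mathcal{I}(A^n)(t,x)-\mathcal{I}(A)(t,x)\bigr\vert \le \tilde C(R,M,T)\,\Vert A^n-A\Vert_{\gamma,1+\eta,\lambda}\xrightarrow{n\to\infty}0. \]
As $K$ was arbitrary, this is precisely convergence of $\mathcal{I}(A^n)$ to $\mathcal{I}(A)$ in $C([0,T]\times\RR^d;\RR^d)$ endowed with the topology of uniform convergence on compact sets, establishing continuity of $\mathcal{I}$. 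Continuity of each $\mathcal{I}(A)$ as a function of $(t,x)$ is automatic: $\mathcal{I}(A)$ is already $C^\gamma_t C^1_{x,\mathrm{loc}}$ by Theorem~\ref{secyoung thm existence and uniquness of YDE}, since the inclusion $C^{1+\eta,\lambda}_x\hookrightarrow C^{1+\eta}_{x,\mathrm{loc}}$ gives the local regularity required to invoke uniqueness and differentiability of the flow.

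For the random flow assertion, I would use the fact that a continuous map between topological spaces is automatically Borel measurable. Hence, given a random field $\omega\mapsto A(\omega)\in C^\gamma_t C^{1+\eta,\lambda}_x$ which is measurable with respect to the Borel $\sigma$-algebra of its target, the composition $\omega\mapsto \Phi(\omega):=\mathcal{I}(A(\omega))$ is a measurable map into $C([0,T]\times\RR^d;\RR^d)$ and provides the sought random flow. There is no substantial obstacle in this argument: the whole corollary reduces to the observation that the constant produced by Theorem~\ref{secyoung comparison theorem} depends on the initial data only through a uniform bound, so that compact sets of initial conditions can be handled at once.
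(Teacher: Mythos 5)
Your argument is correct and is essentially the paper's own proof: both deduce uniform convergence on compact sets from the comparison estimate of Theorem~\ref{secyoung comparison theorem}, using that the constant is uniform over initial data in a ball and that a convergent sequence in $C^\gamma_t C^{1+\eta,\lambda}_x$ is bounded, and both obtain measurability of $\omega\mapsto\mathcal{I}(A(\omega))$ by composing a measurable map with the continuous map $\mathcal{I}$. The only (harmless) additions are your explicit remarks that the $C^\gamma_t$-norm dominates the sup norm in time and that $\mathcal{I}(A)\in C^\gamma_t C^1_{x,loc}$ guarantees joint continuity, which the paper leaves implicit.
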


\begin{proof}
The statement is an immediate consequence of estimate~\eqref{secyoung estimate comparison theorem}. Indeed, given $A^i\in C^\gamma_t C^{1+\eta,\lambda}_x$ with $\Vert A^i \Vert_{\gamma,1+\eta,\lambda}\leq M$, the solutions $\theta^i$ associated to $(A^i,\theta_0,)$ correspond to $\theta^i_t = \mathcal{I}(A^i)(t,\theta_0)$ and therefore from~\eqref{secyoung estimate comparison theorem} we deduce that
\begin{align*}
    \sup_{\theta_0\in B_R, t\in [0,T]} \vert \mathcal{I}(A^1)(t,\theta_0) -\mathcal{I}(A^1)(t,\theta_0) \vert
    & \leq \sup_{\theta_0\in B_R}  \Vert \mathcal{I}(A^1)(\cdot,\theta_0) - \mathcal{I}(A^2)(\cdot,\theta_0)\Vert_\gamma\\
    & \leq C \Vert A^1-A^2\Vert_{\gamma,1+\eta,\lambda}.
\end{align*}
Given a sequence $A^n\to A$ in $C^\gamma_t C^{1+\eta,\lambda}_x$, it must be bounded in $C^\gamma_t C^{1+\eta,\lambda}_x$ and therefore for any $R>0$ we can find $C_R>0$ such that
\begin{equation*}
     \sup_{\theta_0\in B_R, t\in [0,T]} \vert \mathcal{I}(A^n)(t,\theta_0) -\mathcal{I}(A)(t,\theta_0) \vert
     \leq C_R \Vert A^n-A\Vert_{\gamma,1+\eta,\lambda}\to 0
\end{equation*}
which shows uniform convergence on compact sets of $\mathcal{I}(A^n)$ to $\mathcal{I}(A)$. The last statement follows from the fact that continuous image of measurable functions is still measurable.
\end{proof}

\begin{rem}
The results from \cite{Catellier2016,hu2017nonlinear,galeati2020noiseless,harang2020cinfinity} actually show that, given a bounded family $\{A_n\}_n$ in $C^\gamma_t C^{1+\eta,\lambda}_x$, the associated flows $\mathcal{I}(A_n)$ are bounded in $C^\gamma_t C^1_{x,loc}$ (in the sense that all seminorms $\Vert \mathcal{I}(A_n)\Vert_{\gamma,1,R}$ are controlled). Thus interpolation estimates allow to improve the previous result by showing that, if $A_n\to A$ in $C^\gamma_t C^{1+\eta,\lambda}_x$, then $\mathcal{I}(A_n)\to\mathcal{I}(A)$ in $C^{\gamma-\varepsilon}_t C^{1-\varepsilon}_{x,loc}$ for any $\varepsilon>0$.
\end{rem}

%


\section{Averaged fields with multiplicative noise}\label{sec: avg fields w multiplicative noise}
An averaged field with multiplicative noise is formally given by
\begin{equation}\label{eq:formal avg field}
    \Gamma^w_{s,t}b(x)=\int_s^t b(x+w_r)\dd \beta_r,\qquad x\in \RR^d,\,\,[s,t]\subset [0,T], 
\end{equation}
where we consider in general $w\in C([0,T];\mathbb{R}^d)$, $b\in \cD(\RR^d;\mathbb{R}^{d\times m})$ and $\beta\in C^H([0,T];\mathbb{R}^m)$ to be a H\"older continuous path with $H>1/2$.

The main goal of this Section is to prove the following result, which allows to rigorously construct $\Gamma^w b$ as a random field and to relate its space-time regularity to that of the classical averaged field $T^w b$.

\begin{thm}\label{thm: main result sec3}
Let $\beta=\{\beta_t\}_{t\in [0,T]}$ be a fBm of Hurst parameter $H>1/2$, with values in $\mathbb{R}^m$, defined on a probability space $(\Omega,\mathcal{F},\mathbb{P})$. Then for any deterministic $b\in\mathcal{S}(\mathbb{R}^d;\mathbb{R}^{d\times m})$ and $w\in C^\delta([0,T];\mathbb{R}^d)$ with $H+\delta>1$, it's possible to define the averaged field $\Gamma^w b$ in~\eqref{eq:formal avg field} pathwise as a Young integral; $\Gamma^w b$ can be regarded as a random field from $[0,T]\times \mathbb{R}^d$ to $\mathbb{R}^d$.\\
The definition extends continuously in a unique way to any pair $(b,w)$ with $b\in \mathcal{D}(\mathbb{R}^d;\mathbb{R}^{d\times m})$, $w\in C([0,T];\mathbb{R}^d)$ such that $T^w b\in C^\gamma_t C^\eta_x$ for some $\gamma>1-H$, $\eta\in(0,1)$.
In that case
\[
\Gamma^w b \in L^p(\Omega; C^{\gamma'}_t C^{\eta',\lambda}_x)
\quad \forall\, p<\infty, \,\gamma'<\gamma+H-1, \, \eta'<\eta, \, \lambda>0.
\]
and there exists $C>0$ (depending on all the above parameters) such that for any $(b^i,w^i)$ as above it holds
\begin{equation}\label{eq: main thm sec3 estimate}
    \EE\big[\, \big\Vert \Gamma^{w^1} b^1-\Gamma^{w^2} b^2\big\Vert_{\gamma',\eta',\lambda}^p\big]\leq C\, \big\Vert T^{w^1}b^1-T^{w^2} b^2\big\Vert^p_{\gamma,\eta}.
\end{equation}
More generally, estimate~\eqref{eq: main thm sec3 estimate} holds replacing $\eta'$, $\eta$ with $n+\eta'$, $n+\eta$ respectively, for any $n\in\mathbb{N}$; namely, $\Gamma^w b$ inherits higher space regularity from $T^w b$.
\end{thm}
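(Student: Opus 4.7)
The plan is to define $\Gamma^w b$ pathwise in the regular regime as a Young integral, establish a stochastic moment bound in the style of Hairer--Li~\cite{hairer2019averaging}, and then extend the definition by density using this bound. For $b\in\cS(\RR^d;\RR^{d\times m})$ and $w\in C^\delta_t$ with $H+\delta>1$, the map $r\mapsto b(y+w_r)$ is $\delta$-H\"older in $r$ locally uniformly in $y$, hence~\eqref{eq:formal avg field} is immediately meaningful as a classical Young integral against $\beta\in C^H_t$ for every $\omega$; the resulting $\Gamma^w b$ is a continuous random field.

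The crucial probabilistic input is an estimate for Wiener integrals against fBm of the following form: for any deterministic $F\in C^\gamma_t$ with $\gamma+H>1$, whose formal derivative is $f$, one has
\begin{equation*}
    \EE\Bigl|\int_s^t f_r\,\dd\beta_r\Bigr|^p \lesssim_p \llbracket F\rrbracket_\gamma^p\,|t-s|^{p(\gamma+H-1)} \qquad\forall\, p<\infty,
\end{equation*}
the integral being interpreted via Gaussian/Wiener isometry (and coinciding with the Young integral when $F$ is smooth enough). The key observation is that, for smooth $(b,w)$ and fixed $y$, the primitive in $r$ of $b(y+w_r)$ is exactly $T^w b(\cdot,y)$. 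Applied to smooth data $(b^1,w^1)$, $(b^2,w^2)$ with $F_r:=T^{w^1}b^1(r,y_1)-T^{w^2}b^2(r,y_1)-T^{w^1}b^1(r,y_2)+T^{w^2}b^2(r,y_2)$, whose $\gamma$-H\"older seminorm in $r$ is bounded by $|y_1-y_2|^\eta\,\Vert T^{w^1}b^1-T^{w^2}b^2\Vert_{\gamma,\eta}$, and similarly to the variant with $y_1=y_2$, this produces the bivariate moment bounds
\begin{equation*}
    \EE\bigl|[\Gamma^{w^1}b^1-\Gamma^{w^2}b^2]_{s,t}(y_1)-[\Gamma^{w^1}b^1-\Gamma^{w^2}b^2]_{s,t}(y_2)\bigr|^p
    \lesssim_p |t-s|^{p(\gamma+H-1)}|y_1-y_2|^{p\eta}\Vert T^{w^1}b^1-T^{w^2}b^2\Vert^p_{\gamma,\eta},
\end{equation*}
together with the corresponding estimate on $(s,t)$-increments. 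Since for deterministic data the random variables in question lie in a fixed Wiener chaos, Gaussian hypercontractivity makes these bounds effective for all $p<\infty$ simultaneously.

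Joint Kolmogorov--Chentsov in $(s,t,y)$ then converts these bounds into almost sure joint H\"older regularity at the arbitrarily small losses $\gamma'<\gamma+H-1$, $\eta'<\eta$; the growth control in $y$ measured by $\lambda>0$ in Definition~\ref{defn: weighted holder} is achieved by the standard device of summing the Kolmogorov tail bounds over dyadic balls $B_{2^n}$, the polynomial proliferation of test points in $n$ being absorbed by the weight $R^{-\lambda}$ for any $\lambda>0$. This establishes~\eqref{eq: main thm sec3 estimate} in the regular regime. The extension to general admissible $(b,w)$ is now by density: given $T^w b\in C^\gamma_tC^\eta_x$, Lemma~\ref{lem: smooth approximation Twb} produces $b^n:=\rho^{1/n}\ast b$ with $T^w b^n\to T^w b$ in $C^\gamma_tC^\eta_x$, and one then smooths $w$ to obtain $w^n$ such that $T^{w^n}b^n$ is Cauchy in the same norm. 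Estimate~\eqref{eq: main thm sec3 estimate} on smooth data shows that $(\Gamma^{w^n}b^n)_n$ is Cauchy in the Banach space $L^p(\Omega;C^{\gamma'}_tC^{\eta',\lambda}_x)$; its limit defines $\Gamma^w b$ and is independent of the approximating sequence, whence uniqueness of the extension. The statement at level $n+\eta$ follows at once from the commutation relations $D_y^k\Gamma^w b=\Gamma^w(D^k b)$ and $T^w(D^k b)=D^k T^w b$ (Lemma~\ref{lem: properties of classical averagaed field}), reducing to the case already treated.

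\textbf{Main obstacle.} The subtle point is the parametric use of the Hairer--Li estimate uniformly in $y$ together with the correct trade-off in the Kolmogorov--Chentsov step: one must both retain the sharp losses $\gamma'<\gamma+H-1$, $\eta'<\eta$ and produce a bound in the weighted space $C^{\eta,\lambda}_x$ with $\lambda>0$ arbitrarily small, which requires careful book-keeping of constants across dyadic spatial scales and an adaptation of the Hairer--Li estimate to the parameter-dependent setting needed here.
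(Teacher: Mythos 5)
Your proposal is correct and follows essentially the same route as the paper: pathwise Young construction for regular data, the Hairer--Li moment bound applied with $T^w b$ as the time-primitive of the integrand, a Kolmogorov/GRR-type argument over dyadic spatial scales to land in the weighted spaces $C^{\eta',\lambda}_x$, extension by density via estimate~\eqref{eq: main thm sec3 estimate}, and commutation with spatial derivatives for the higher-regularity statement. The only cosmetic deviations are that the paper packages the dyadic-summation step as a modified Garsia--Rodemich--Rumsey lemma rather than Kolmogorov--Chentsov, and that Lemma~\ref{lem: smooth approximation Twb} actually yields convergence of $T^w b^n$ only with an arbitrarily small loss in both exponents, which is harmless since your target exponents $\gamma'<\gamma+H-1$, $\eta'<\eta$ are strict anyway.
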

\begin{rem}\label{rem: sufficent regularity requirement} Observe that in the statement above, if $T^w b\in C^\gamma_t C^2_x$ for some $\gamma$ such that $\gamma+H>3/2$, it is always possible to choose $\gamma'$, $\eta'$ and $\lambda$ such that $\gamma'>1/2$, $\gamma'(1+\eta')>1$ and $\eta'+\lambda<1$. 
\end{rem}

The proof of Theorem~\ref{thm: main result sec3} is presented throughout the section, which is structure as follows.

We first consider the more regular case in which $w\in C^\delta_t$ with $\delta+H>1$. Here we can give a rigorous analytical construction of the operator $b\mapsto \Gamma^w b$, as a map from $\cD(\RR^d)$ into itself; in this case, the definition does not require $\beta$ to be sampled as an fBm and instead holds for any given $H$-H\"older continuous path.

Next we restrict our attention to the fBm case, in which by more probabilistic techniques we can extend the definition of $\Gamma^w b$ to a larger class of $(w,b)$; this class is defined only in terms of the regularity of the classical averaged field $T^w b$.
A key point will be the use of a lemma from \cite{hairer2019averaging} to obtain suitable $L^p(\Omega)$ bounds for $\Gamma^wb$, combined with a modified version of the Garsia-Rodemich-Rumsey Lemma.
\subsection{Definition of averaging operator}\label{sec3.1}
The purpose of this section is to analitically define the multiplicative averaging operator $\Gamma^w$ as a map from $\mathcal{D}(\RR^d)$ to itself; to this end, we need to impose some regularity on $w$ and $\beta$, namely require $H+\delta>1$. The advantage of this approach is that the definition can be applied to any path $\beta\in C^H_t$, not necessarily sampled as an fBm; however we will see in the next sections that, in the fBm case, we can drop the condition $H+\delta>1$, by defining $\Gamma^w b$ as a random field.\\
Recall that for any $v\in\RR^d$, $\tau^v$ denotes the translation operator by $v$, i.e. $\tau^v b(\cdot)=b(\cdot+v)$. 

\begin{lem}\label{lem defn distributional averaging}
Let $\alpha\in\RR$, $w\in C^\delta_t$, $\beta\in C^H_t$ and $\eta\in (0,1]$ such that
\begin{equation*}
H+\eta\delta>1.
\end{equation*}
Then for any $b\in C^{\alpha+\eta}_x$ there exists a unique element of $C^H_t C^\alpha_x$, which we denote by $\Gamma^w b$ and which we will refer to as a multiplicative averaged field, such that
\begin{equation*}
||\Gamma^w_{s,t} b-b(\cdot+w_s)\beta_{s,t}||_\alpha\lesssim |t-s|^{H+\eta\delta}.
\end{equation*}
Moreover there exists a constant $C=C(H+\eta\delta,T)$ such that for any $b\in C^{\alpha+\eta}_x$ it holds
\begin{equation}\label{sectemp bound young averaging}
\Vert \Gamma^w b\Vert_{H,\alpha} \leq C \Vert b\Vert_{\alpha+\eta} \llbracket \beta \rrbracket_H (1+\llbracket w\rrbracket_\delta).
\end{equation}
In particular, the map $\Gamma^w:b\mapsto \Gamma^w b$ is an element of $\mathcal{L}(C^{\alpha+\eta}_x;C^H_t C^\alpha_x)$. If $\alpha>0$, then $\Gamma^w b$ defined as above coincides with the pointwise map defined by the Young integral
\begin{equation}\label{sectemp pointwise averaging definition}
(\Gamma^w_{s,t} b)(x)=\int_s^t b(x+w_r)\mathd \beta_r.
\end{equation}
\end{lem}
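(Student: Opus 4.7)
The plan is to construct $\Gamma^w b$ by applying the Sewing Lemma in the Banach space $C^\alpha_x$ to the natural germ
\[
A_{s,t} := (\tau^{w_s} b)\,\beta_{s,t} \in C^\alpha_x,
\]
which is the two-parameter object whose limit under Young-type Riemann sums should give the desired averaged field. Uniqueness from the Sewing Lemma will automatically produce the uniqueness claim of the statement, and continuity/linearity of $b\mapsto \Gamma^w b$ into $C^H_t C^\alpha_x$ will follow from tracking constants.

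The key analytic input is the $C^\alpha_x$-valued Hölder regularity of the translation map: for $b\in C^{\alpha+\eta}_x=\mathcal{B}^{\alpha+\eta}_{\infty,\infty}$ and $\eta\in (0,1]$, one has
\[
\Vert \tau^{v_1}b - \tau^{v_2}b\Vert_\alpha \lesssim \Vert b\Vert_{\alpha+\eta}\,|v_1-v_2|^\eta.
\]
Granting this, the sewing defect computes as $\delta A_{s,u,t}=(\tau^{w_s}b-\tau^{w_u}b)\beta_{u,t}$, whence
\[
\Vert \delta A_{s,u,t}\Vert_\alpha \lesssim \Vert b\Vert_{\alpha+\eta}\llbracket w\rrbracket_\delta^\eta \llbracket \beta\rrbracket_H \,|u-s|^{\eta\delta}|t-u|^H \lesssim |t-s|^{H+\eta\delta},
\]
with exponent strictly above $1$ by hypothesis, so the Banach-valued Sewing Lemma produces a unique additive $(s,t)\mapsto \Gamma^w_{s,t}b\in C^\alpha_x$ with remainder $\Vert \Gamma^w_{s,t}b-A_{s,t}\Vert_\alpha\lesssim |t-s|^{H+\eta\delta}$. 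Combining this remainder with the trivial bound $\Vert A_{s,t}\Vert_\alpha\leq \Vert b\Vert_\alpha\llbracket\beta\rrbracket_H|t-s|^H$ and using $\llbracket w\rrbracket_\delta^\eta\leq 1+\llbracket w\rrbracket_\delta$ (valid for $\eta\in (0,1]$) yields the claimed bound~\eqref{sectemp bound young averaging}; linearity in $b$ is inherited from the linearity of $A$ and the sewing limit.

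For the final identification when $\alpha>0$, the embedding $C^{\alpha+\eta}_x\hookrightarrow C_b(\mathbb{R}^d)$ lets us evaluate pointwise: for fixed $x$, the map $r\mapsto b(x+w_r)$ is $(\eta\delta)$-Hölder and $\beta\in C^H_t$ with $H+\eta\delta>1$, so the scalar Young integral $I_{s,t}(x):=\int_s^t b(x+w_r)\,\mathd \beta_r$ is defined and satisfies exactly the same germ characterisation with germ $A_{s,t}(x)$. Uniqueness of the scalar Young sewing identifies it with $(\Gamma^w_{s,t}b)(x)$, giving \eqref{sectemp pointwise averaging definition}.

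The one non-routine step is the translation-Hölder estimate in $C^\alpha_x$ when $\alpha$ is negative, where one cannot argue pointwise via a fundamental-theorem-of-calculus style bound. I would prove it by the Littlewood--Paley characterisation of $\mathcal{B}^\alpha_{\infty,\infty}$: translation commutes with each dyadic block $\Delta_j$, and on a single block $\Vert \tau^v \Delta_j b-\Delta_j b\Vert_{L^\infty}\lesssim \min(1,|v|\,2^j)\Vert\Delta_j b\Vert_{L^\infty}$; interpolating this with exponent $\eta$ and using $\Vert\Delta_j b\Vert_{L^\infty}\lesssim 2^{-(\alpha+\eta)j}\Vert b\Vert_{\alpha+\eta}$ gives exactly the required bound after summation in $j$. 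This is the main technical obstacle, but it is a standard Besov estimate and otherwise the proof is essentially a sewing-lemma calculation.
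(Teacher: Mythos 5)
Your proposal is correct and follows essentially the same route as the paper: sewing in the Banach space $C^\alpha_x$ applied to the germ $(\tau^{w_s}b)\beta_{s,t}$, with the defect controlled by the translation estimate $\Vert \tau^y b-\tau^z b\Vert_\alpha\lesssim |y-z|^\eta\Vert b\Vert_{\alpha+\eta}$ proved exactly as you suggest via Littlewood--Paley blocks and Bernstein-type interpolation. The identification with the pointwise Young integral for $\alpha>0$ is also the paper's argument (phrased there as coincidence of the sewings of $\langle \Xi_{s,t},\delta_x\rangle$), so no gaps.
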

\begin{proof}
All the statements easily follow from an application of the sewing lemma (e.g. \cite[Lemma 4.2]{Friz2014}). Set, for any $s\leq t$, $\Xi_{s,t}:=(\tau^{w_s}b) \beta_{s,t}\in C^\alpha_x$; it holds $\delta \Xi_{s,u,t}=(\tau^{w_s}b-\tau^{w_u}b) \beta_{s,t}$ with the estimates
\begin{align*}
\Vert \delta \Xi_{s,u,t}\Vert_\alpha
& = \Vert \tau^{w_s}b-\tau^{w_u}b \Vert_\alpha\, |\beta_{s,t}|
\lesssim \Vert b \Vert_{\alpha+\eta} |w_{s,u}|^\eta |\beta_{s,t}|\\
& \leq \Vert b \Vert_{\alpha+\eta}
 \llbracket w \rrbracket^{\eta}_\delta \llbracket \beta \rrbracket_H |t-s|^{H+\delta\eta},
\end{align*}
where we used the basic estimate
\begin{equation}\label{eq: baby bernstein}
\Vert \tau^y b-\tau^z b\Vert_\alpha \lesssim |y-z|^\eta \Vert b\Vert_{\alpha+\eta}.
\end{equation}
To see~\eqref{eq: baby bernstein}, observe that by Bernstein estimates, for any Littlewood-Paley block of $b$ it holds
\begin{equation*}
\Vert \tau^y \Delta_n b-\tau^z \Delta_n b\Vert_\infty
\lesssim \Vert \Delta_n b\Vert_\infty,\quad \Vert \tau^y \Delta_n b-\tau^z \Delta_n b\Vert_\infty
\lesssim 2^n |y-z| \Vert \Delta_n b\Vert_\infty,
\end{equation*}
which interpolated together provide, for any $\eta\in [0,1]$,
\begin{equation*}
\Vert \tau^y b-\tau^z b\Vert_\alpha
= \sup_n \{ 2^{n\alpha} \Vert \tau^y \Delta_n b-\tau^z \Delta_n b\Vert_\infty\}
\lesssim |y-z|^\eta \sup_n \{ 2^{n(\alpha+\eta)} \Vert \Delta_n b\Vert_\infty\}
= |y-z|^\eta \Vert b\Vert_{\alpha+\eta}.
\end{equation*}   
\noindent The sewing lemma thus implies the existence and uniqueness of $\Gamma^w b$, as well as the bound
\begin{equation*}
\Vert \Gamma^w_{s,t} b - b(\cdot+w_s)\beta_{s,t}\Vert_\alpha
\lesssim \Vert b \Vert_{\alpha+\eta}
 \llbracket w \rrbracket^{\eta}_\delta \llbracket \beta \rrbracket_H.
\end{equation*}
We then have
\begin{align*}
\Vert \Gamma^w_{s,t} b\Vert _\alpha
& \leq \Vert \tau^{w_s} b\Vert_\alpha |\beta_{s,t}| + C \Vert b \Vert_{\alpha+\eta} \llbracket w \rrbracket^{\eta}_\delta \llbracket \beta \rrbracket_H |t-s|^{H+\eta\delta}\\
& \lesssim_T |t-s|^H \Vert b\Vert_{\alpha+\eta} \llbracket \beta \rrbracket_H (1+\llbracket w \rrbracket_\delta),
\end{align*}
which implies bound~\eqref{sectemp bound young averaging}. The last claim follows from the fact that the Young integral in~\eqref{sectemp pointwise averaging definition} corresponds to the sewing of $\langle \Xi_{s,t},\delta_x\rangle$ and thus must coincide with $\langle \Gamma_{s,t}^w b,\delta_x\rangle$. 
\end{proof}
The operator $\Gamma^w$ behaves similarly to the classical averaging operator $T^w$; we summarize some of its properties in the following two lemmas. 
\begin{lem}\label{lem properties averaging} Let $\Gamma^wb$ be given as in Lemma \ref{lem defn distributional averaging}. Then  the following  properties hold:
\begin{itemize}
\item[i.] Averaging and space differentiation (in the distributional sense) commute:
\[ \partial_{x_i} \Gamma^w b = \Gamma^w \partial_i b\quad \forall\, b\in C^\alpha_x,\, i=1,\ldots,d.\]
\item[ii.] Averaging and spatial convolution commute: for any $\varphi\in C^\infty_c$ it holds
\[ \varphi\ast (\Gamma^w b) = \Gamma^w (\varphi\ast b)\quad \forall\, b\in C^\alpha_x.\]
\item[iii.] If $b$ is compactly supported, then so is $\Gamma^w b$, with $\mathrm{supp}\, \Gamma^w_{s,t} b\subset \mathrm{supp}\, b + B(0,\Vert w\Vert_{\infty})$ for all $s,t$. Similarly, if $b^1$ and $b^2$ coincide on $B(0,R)$, then $\Gamma^w b^1$ and $\Gamma^w b^2$  coincide on $B(0,R-\Vert w\Vert_\infty)$.
\item[iv.] The operator $\Gamma^w$ can be extended to an operator from $\mathcal{D}(\RR^d)$ to itself by the duality formula
\[ \langle \Gamma_{s,t}^w\psi,\varphi \rangle:=\langle \psi, \Gamma_{s,t}^{-w}\varphi\rangle\quad \forall \psi\in \mathcal{D}(\RR^d),\,\varphi\in C^\infty_c(\RR^d).  \]
\end{itemize}
\end{lem}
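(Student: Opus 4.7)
All four items can be obtained from a single organizing principle: by Lemma~\ref{lem defn distributional averaging}, $\Gamma^w b$ is the unique element of $C^H_t C^\alpha_x$ obtained by sewing the germ
\[
\Xi_{s,t}(b) := \tau^{w_s} b \cdot \beta_{s,t} \in C^\alpha_x.
\]
Consequently, any bounded linear operator $L$ on the target Banach space that commutes with spatial translations will commute with the sewing, provided that $L\Xi_{s,t}(b)$ is still a germ falling under Lemma~\ref{lem defn distributional averaging}; the identification then reduces to uniqueness of sewn integrals. I will apply this recipe to differentiation and to convolution, and use support properties of the germ for the remaining items.

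\textbf{Items (i) and (ii).} For (i), take $L = \partial_{x_i}$, which is a bounded operator $C^{\alpha+\eta}_x \to C^{\alpha+\eta-1}_x$ commuting with translations. Then $L\Xi_{s,t}(b) = \tau^{w_s}(\partial_i b)\cdot \beta_{s,t} = \Xi_{s,t}(\partial_i b)$. The left-hand side $\partial_i \Gamma^w b$ is the image under $L$ of the sewing of $\Xi_{s,t}(b)$; the right-hand side $\Gamma^w(\partial_i b)$ is, by Lemma~\ref{lem defn distributional averaging} applied at regularity $\alpha-1$, the sewing of $\Xi_{s,t}(\partial_i b)$. Since both objects are sewings of the same germ in $C^{\alpha-1}_x$, the uniqueness clause of the sewing lemma forces equality. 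Part (ii) is identical, with $L = \varphi \ast \cdot$, a bounded translation-invariant operator $C^\alpha_x \to C^{\alpha+k}_x$ for every $k$.

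\textbf{Item (iii).} For each $r\in[0,T]$, $\mathrm{supp}(\tau^{w_r} b) = \mathrm{supp}(b) - w_r \subset \mathrm{supp}(b) + \overline{B(0,\Vert w\Vert_\infty)}$, so the germ $\Xi_{s,t}(b)$ is supported in this closed set $K$. Each partial Riemann sum $\sum_i \Xi_{t_i,t_{i+1}}(b)$ built along a partition of $[s,t]$ has support in $K$, and since the sewing is the limit of these sums in $C^\alpha_x$ (hence in $\mathcal{D}(\mathbb{R}^d)$), $\Gamma^w_{s,t} b$ inherits the same support. The second statement of (iii) follows from linearity of $\Gamma^w$: if $b^1 = b^2$ on $B(0,R)$ then $\mathrm{supp}(b^1-b^2)\subset B(0,R)^c$, whence $\mathrm{supp}\,\Gamma^w_{s,t}(b^1-b^2)\subset B(0,R-\Vert w\Vert_\infty)^c$ by the computation just performed.

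\textbf{Item (iv).} Iterating (i) shows that $\Gamma^{-w}_{s,t}\varphi$ is spatially $C^\infty$ whenever $\varphi\in C^\infty_c$, while (iii) shows it is compactly supported; the continuity estimate~\eqref{sectemp bound young averaging} applied to all derivatives makes the map $\varphi\mapsto \Gamma^{-w}_{s,t}\varphi$ continuous from $C^\infty_c(\mathbb{R}^d)$ into itself with the standard inductive-limit topology. Hence the duality formula defines a distribution $\Gamma^w_{s,t}\psi\in\mathcal{D}(\mathbb{R}^d)$ for every $\psi\in\mathcal{D}(\mathbb{R}^d)$. The only thing left to verify is that this extends the construction of Lemma~\ref{lem defn distributional averaging}: for smooth $\psi$, this is an application of Fubini in the pointwise formula~\eqref{sectemp pointwise averaging definition}, namely
\[
\int \Gamma^w_{s,t}\psi(x)\,\varphi(x)\dd x = \int_s^t\!\!\int \psi(x+w_r)\varphi(x)\dd x\dd\beta_r = \int \psi(y)\,\Gamma^{-w}_{s,t}\varphi(y)\dd y,
\]
and the general case $\psi\in C^{\alpha+\eta}_x$ follows by mollification of $\psi$, together with the continuity estimate~\eqref{sectemp bound young averaging} which ensures $\Gamma^w\psi_n\to\Gamma^w\psi$ in $C^H_t C^\alpha_x$ and hence as distributions.

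\textbf{Main obstacle.} The routine items are (i)--(iii); the only mildly delicate point is the consistency claim built into (iv), i.e.\ verifying that the new definition agrees with the Young-integral one on the overlap. This rests on a Fubini exchange between the Young integral against $\dd\beta_r$ and the spatial pairing with $\varphi$, which is transparent for smooth $\psi$ but has to be pushed by density through the sewing's continuity estimate in order to cover the full regularity class of Lemma~\ref{lem defn distributional averaging}.
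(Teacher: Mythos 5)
Your proof is correct and follows essentially the same route as the paper: in both cases (i)--(iii) are obtained by commuting the relevant operation with the germ $\Xi_{s,t}[b]=(\tau^{w_s}b)\,\beta_{s,t}$ (support included) and invoking uniqueness of the sewing, and (iv) is defined by duality using that $\Gamma^{-w}_{s,t}$ maps $C^\infty_c$ continuously into itself. The only minor difference is the consistency check in (iv): the paper verifies the identity directly at the germ level, $\langle(\tau^{w_s}\psi)\beta_{s,t},\varphi\rangle=\langle\psi,(\tau^{-w_s}\varphi)\beta_{s,t}\rangle$, which covers distributional $\psi$ at once, while your Fubini-plus-mollification argument needs the harmless adjustment that mollifications of $\psi$ converge only in $C^{\alpha+\eta-\varepsilon}_x$ rather than in $C^{\alpha+\eta}_x$, which is still enough to identify the limits as distributions.
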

\begin{proof}
The proof is analogue to that of Lemma \ref{lem defn distributional averaging}. Indeed, by setting $\Xi[b]_{s,t}:=(\tau^{w_s} b) \beta_{s,t}$, it is immediate to check that
\[ \partial_{x_i} \Xi[b] = \Xi[ \partial_{x_i} b ],\quad \varphi\ast \Xi_{s,t}[b]=\Xi_{s,t}[\varphi\ast b] \]
and so the same relations must hold between the respective sewings, proving points~\textit{i.} and~\textit{ii.}. The first part of point~\textit{iii.} follows from the fact that, for any $s<t$, $\Xi_{s,t}[b]$ is supported on $\mathrm{supp}\, b + B(0,w_s)\subset \mathrm{supp}\, b + B(0,\Vert w\Vert_\infty)$ and the second part by applying a similar reasoning to $b^1-b^2$. Finally, it follows from Lemma~\ref{lem defn distributional averaging} and point~\textit{iii.} that $\Gamma_{s,t}^w$ continuously maps $C^\infty_c$ into itself; therefore also the dual definition from $\mathcal{D}(\mathbb{R}^d)$ to itself is meaningful. Whenever $\psi$ and $\varphi$ are both smooth, we have the relation
\[ \langle (\tau^{w_s} \psi)\beta_{s,t}, \varphi\rangle = \langle \psi, (\tau^{-w_s}\varphi)\beta_{s,t}\rangle \]
which implies the same relation for the respective sewings, i.e. $\langle \Gamma^w_{s,t} \psi, \varphi\rangle = \langle \psi, \Gamma^{-w}_{s,t}\varphi\rangle$.
\end{proof}
\begin{lem}\label{lem approximation averaging}
Let $b\in\mathcal{D}(\RR^d)$ be such that $\Gamma^w b\in C^\gamma_t C^{\alpha,\lambda}_x$ for some $\gamma,\lambda\in (0,1)$ and $\alpha\in (0,\infty)$. Let $\{\rho^\varepsilon\}_{\varepsilon>0}$ be a family of standard mollifiers and set $b^\varepsilon=\rho^\varepsilon\ast b$. Then for any $\varepsilon>0$ it holds $\Gamma^w b^\varepsilon\in C^\gamma_t C^{\alpha,\lambda}_x$ with
\begin{equation}\label{lem approximation bound}
\Vert \Gamma^w b^\varepsilon\Vert_{\gamma,\alpha,\lambda}
\lesssim  \Vert \Gamma^w b\Vert_{\gamma,\alpha,\lambda};
\end{equation}
moreover $\Gamma^w b^\varepsilon \to \Gamma^w b$ as $\varepsilon\to 0$ in $C^{\gamma'}_t C^{\alpha',\lambda}_x$ for any $\gamma'<\gamma$ and $\alpha'<\alpha$.
\end{lem}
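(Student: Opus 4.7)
The plan is to exploit the commutativity of spatial convolution and averaging: by Lemma~\ref{lem properties averaging}~(ii) we have the identity $\Gamma^w b^\varepsilon = \rho^\varepsilon \ast \Gamma^w b$, which reduces the entire statement to standard mollifier estimates applied to the fixed field $A := \Gamma^w b \in C^\gamma_t C^{\alpha,\lambda}_x$.

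For the uniform bound~\eqref{lem approximation bound}, first I would note that for each increment $f := A_{s,t}$ and any $R\geq 1$, $\varepsilon\in(0,1]$, the basic estimate $[\rho^\varepsilon\ast f]_{\alpha,R}\leq [f]_{\alpha,R+\varepsilon}$ combined with $R+\varepsilon\leq 2R$ gives $R^{-\lambda}[\rho^\varepsilon\ast f]_{\alpha,R}\leq 2^\lambda \Vert f\Vert_{\alpha,\lambda}$. The analogous control at the origin, $|(\rho^\varepsilon\ast f)(0)|\lesssim |f(0)| + \Vert f\Vert_{\alpha,\lambda}$, follows from the H\"older bound on $f$ in a unit neighborhood of $0$. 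Dividing by $|t-s|^\gamma$ and passing to the supremum in $(s,t)$ gives~\eqref{lem approximation bound}.

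For the convergence, setting $D_\varepsilon := \rho^\varepsilon\ast A - A$, the increment $(D_\varepsilon)_{s,t}=\rho^\varepsilon\ast f - f$ satisfies the two standard estimates
\[
\Vert \rho^\varepsilon\ast f - f\Vert_{\infty, R} \lesssim \varepsilon^\alpha R^\lambda \Vert f\Vert_{\alpha,\lambda}, \qquad [\rho^\varepsilon\ast f - f]_{\alpha, R} \lesssim R^\lambda \Vert f\Vert_{\alpha,\lambda},
\]
the first being quantitative mollifier decay and the second just the triangle inequality. Applying the standard H\"older interpolation
\[
[g]_{\alpha', R} \lesssim \Vert g\Vert_{\infty, R}^{1-\alpha'/\alpha}\,[g]_{\alpha, R}^{\alpha'/\alpha}
\]
then yields
\[
R^{-\lambda}\,[(D_\varepsilon)_{s,t}]_{\alpha', R} \lesssim \varepsilon^{\alpha-\alpha'}\Vert f\Vert_{\alpha,\lambda} \leq \varepsilon^{\alpha-\alpha'}|t-s|^\gamma \Vert \Gamma^w b\Vert_{\gamma,\alpha,\lambda}
\]
uniformly in $R\geq 1$, and a parallel (stronger) argument at $x=0$ yields $|(D_\varepsilon)_{s,t}(0)|\lesssim \varepsilon^\alpha |t-s|^\gamma \Vert \Gamma^w b\Vert_{\gamma,\alpha,\lambda}$. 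Dividing by $|t-s|^{\gamma'}$ and absorbing $|t-s|^{\gamma-\gamma'}\leq T^{\gamma-\gamma'}$ produces $\Vert D_\varepsilon\Vert_{\gamma',\alpha',\lambda}\lesssim \varepsilon^{\alpha-\alpha'}\to 0$, which is the claim.

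The argument is essentially bookkeeping with weighted H\"older norms; the main points requiring a little care are the uniform control at infinity (handled via $\varepsilon\leq 1\leq R$ so that $R+\varepsilon\leq 2R$) and the interpolation rate. For higher regularity $\alpha=n+\eta$ with $n\geq 1$ the same scheme applies derivative by derivative, using that convolution commutes with differentiation.
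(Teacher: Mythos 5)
Your proof is correct and follows essentially the same route as the paper: both rest on the identity $\Gamma^w b^\varepsilon=\rho^\varepsilon\ast\Gamma^w b$ from Lemma~\ref{lem properties averaging}, the support of the mollifier together with $(R+\varepsilon)^\lambda\sim R^\lambda$ for the uniform bound, an $\varepsilon^\alpha$ sup-norm convergence estimate, interpolation, and the reduction of the case $\alpha=n+\eta$ to $\alpha\in(0,1)$ via $D^k\Gamma^w b=\Gamma^w D^k b$. The only (harmless) difference is that you interpolate the spatial seminorm of the increments $(D_\varepsilon)_{s,t}$, keeping the full factor $|t-s|^\gamma$, whereas the paper interpolates jointly in time and space to land in $C^{\theta\gamma}_t C^{\theta\alpha,\lambda}_x$; your variant even gives the slightly stronger convergence with $\gamma'=\gamma$.
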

\begin{proof}
It is enough to prove the claim for $\alpha\in (0,1)$, as the other cases follow by repeating the same argument for $D^k \Gamma^w b = \Gamma^w D^k b$.
The bound~\eqref{lem approximation bound} follows from point~\textit{iii.} of Lemma~\ref{lem properties averaging}, since we have
\[
\Vert \Gamma^w b^\varepsilon\Vert_{\gamma,\alpha,R}
= \Vert \rho^\varepsilon\ast\Gamma^w b \Vert_{\gamma,\alpha,R}
\lesssim \Vert \Gamma^w b \Vert_{\gamma,\alpha,R+\varepsilon}
\lesssim R^\lambda \Vert \Gamma^w b \Vert_{\gamma,\alpha,\lambda}
\]
where we used the fact that $\rho^\varepsilon$ is supported in $B_\varepsilon$ and $(R+\varepsilon)^\lambda\sim R^\lambda$ since $R\geq 1$ and $\varepsilon\in (0,1)$. By properties of convolutions, it holds
\[
\sup_{(t,x)\in [0,T]\times B_R} |\Gamma^w b^\varepsilon(t,x)-\Gamma^w b(t,x)|
\lesssim \varepsilon^\alpha \Vert \Gamma^w b\Vert_{\gamma,\alpha,R+\varepsilon}
\lesssim \varepsilon^\alpha R^\lambda \Vert \Gamma^w b\Vert_{\gamma,\alpha,\lambda};
\]
Interpolating this estimate with the uniform bound~\eqref{lem approximation bound}, we obtain that for any $\theta\in (0,1)$ it holds
\[
\Vert \Gamma^w b^\varepsilon-\Gamma^w b\Vert_{\theta\gamma, \theta\alpha,\lambda}
= \sup_{R\geq 1} \{ R^{-\lambda} \Vert \Gamma^w b^\varepsilon-\Gamma^w b\Vert_{\theta\gamma, \theta\alpha,\lambda} \}
\lesssim \varepsilon^{(1-\theta)\alpha} \Vert \Gamma^w b\Vert_{\gamma,\alpha,\lambda}\to 0 \quad \text{ as }\varepsilon\to 0.
\]
By the arbitrariness of $\theta\in (0,1)$ we can conclude.
\end{proof}

\subsection{$L^p$ bounds for averaging operators with multiplicative fBm in the smooth case}

We will now assume that $\{\beta_t\}_{t\in [0,T]}$ is sampled as a fractional Brownian motion with $H>1/2$, with trajectories in $C_t^{H-}$; observe that all the results from the previous section still apply with $H$ replaced by $H-\varepsilon$, $\varepsilon$ sufficiently small. Through probabilistic techniques, we will show that we can extend the definition of $\Gamma^w b$ to other choices of $b$ and $w$ and that $\Gamma^w b$ inherits the spatial regularity of $T^w b$ (at least locally). To this end, we will use a probabilistic inequality for integration with respect to a fractional Brownian motion with $H>\frac{1}{2}$ proven by Hairer and Li \cite[Prop. 3.4]{hairer2019averaging}. We recite this result in the following proposition.  

\begin{prop}\label{prop: lemma 3.4 HairerLi}
Let $\beta:[0,T]\times \Omega\rightarrow \RR^m$ be a fractional Brownian motion with Hurst parameter $H>1/2$, $f:[0,T]\rightarrow \RR$ be a $\mathcal{F}_0$-adapted process. 
Furthermore, assume that for some $\gamma>1/2$ with $H+\gamma>1$ it holds  $\| \int_0^\cdot f_r\dd r\|_\gamma\in L^q(\Omega)$ for some $q>2$. Then for any $p\in [2,q)$ there exists a constant $C= C(p,q,\gamma,H,T)$ such that
\begin{equation*}
    \Big\Vert\int_s^t f_r\dd \beta_r\Big\Vert_{L^p(\Omega)}
    \leq
    C\, \mathbb{E}\bigg[ \Big\Vert \int_0^\cdot f_r\dd r \Big\Vert_\gamma^q\bigg]^{\frac{1}{q}}\,|t-s|^{H+\gamma-1}
    \quad \forall\, [s,t]\subset [0,T].  
\end{equation*}
\end{prop}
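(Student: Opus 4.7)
The plan is to reduce the problem to a variance estimate for a Wiener integral with deterministic integrand, via a conditioning argument. Since $f$ is $\cF_0$-adapted (which I interpret as $\cF_0$-measurable) and $\beta$ is independent of $\cF_0$, the tower property yields $\EE|I_{s,t}|^p = \EE[\EE[|I_{s,t}|^p \mid \cF_0]]$, and conditionally on $\cF_0$ the integrand $f$ becomes deterministic while $\beta$ remains an fBm. If I can establish the bound in the deterministic case, I recover the general statement by taking outer expectation and invoking Jensen's inequality (using $p \leq q$).

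Once $f$ is deterministic, $\int_s^t f_r\,\mathd\beta_r$ is a centered Gaussian, so its $L^p$ norm is controlled by its $L^2$ norm up to a Gaussian constant $C_p$. The $L^2$ norm is computed via the fBm isometry
\[
\Big\Vert \int_s^t f_r\,\mathd\beta_r\Big\Vert_{L^2(\Omega)}^2 = H(2H-1)\int_s^t\int_s^t f(u)\, f(v)\, |u-v|^{2H-2}\,\mathd u\,\mathd v,
\]
so the task reduces to bounding this double integral by $\Vert F\Vert_\gamma^2 |t-s|^{2(H+\gamma-1)}$, where $F := \int_0^\cdot f_r\,\mathd r \in C^\gamma_t$.

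For the variance bound I would write $f = F'$ and use Plancherel to rewrite the double integral as $c_H \int_\RR |\xi|^{1-2H} |\widehat{F' \ind_{[s,t]}}(\xi)|^2\,\mathd\xi$. Integration by parts on the Fourier side yields
\[
\widehat{F'\ind_{[s,t]}}(\xi) = F_{s,t}\, e^{-i\xi t} + i\xi \int_s^t (F(r) - F_s) \,e^{-i\xi r}\,\mathd r,
\]
and the second term can then be controlled via oscillatory-integral estimates exploiting the $\gamma$-H\"older regularity of $F$. Splitting the $\xi$-integration at the scale $\xi_0 = 1/(t-s)$ and treating the low and high frequency regimes separately should produce the target variance bound with exponent $2(H+\gamma-1)$.

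The main obstacle is this variance estimate: the kernel $|u-v|^{2H-2}$ is singular with $2H - 2 \in (-1, 0)$, and naive integration by parts in the original double integral generates even more singular terms. Exploiting the $\gamma$-H\"older regularity of $F$ (rather than, e.g., an $L^\infty$ bound on $f$) is essential, and the Fourier-side approach outlined above is the natural route. The preceding steps, namely the conditioning reduction and the passage from $L^2$ to $L^p$ via Gaussian concentration, are in comparison rather routine.
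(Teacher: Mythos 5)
The paper does not actually prove this proposition; it is recited from \cite{hairer2019averaging}, whose argument goes through the Mandelbrot--van Ness/Volterra representation of $\beta$, so your sketch must be judged on its own. The step you defer as ``oscillatory-integral estimates'' --- the deterministic variance bound --- is exactly where the argument breaks, and in part of the stated parameter range no argument can deliver it. Take $s=0$, $t=1$ and the smooth deterministic integrand $f_r=\delta^{\gamma-1}\cos(r/\delta)$, so that $F_r=\delta^{\gamma}\sin(r/\delta)$ satisfies $\Vert F\Vert_{\gamma}=O(1)$ uniformly in $\delta$. Writing $g$ for $f$ restricted to $[0,1]$ and extended by zero, $|\hat g(\xi)|\asymp \delta^{\gamma-1}$ on a band of width $O(1)$ around $|\xi|=\delta^{-1}$, so the spectral form of the isometry gives
\[
\mathrm{Var}\Big(\int_0^1 f_r\,\dd \beta_r\Big)\;=\;c_H\int_{\RR}|\hat g(\xi)|^2\,|\xi|^{1-2H}\,\dd\xi\;\asymp\;\delta^{2(\gamma+H)-3},
\]
which blows up as $\delta\to 0$ whenever $\gamma+H<3/2$, while your target bound $\Vert F\Vert_\gamma^2|t-s|^{2(H+\gamma-1)}$ stays bounded. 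Hence the estimate you aim for cannot hold for $1/2<\gamma<3/2-H$; the correct threshold is $\gamma+H>3/2$ (consistently, this is the only regime the paper ever uses, cf.\ Theorem~\ref{main thm1} and Remark~\ref{rem: sufficent regularity requirement}), and the bound as recited with merely $H+\gamma>1$ is not something a proof can recover. Moreover, even in the good regime $\gamma+H>3/2$ your proposed splitting at $\xi_0=(t-s)^{-1}$ does not close: the pointwise bound $|\hat g(\xi)|\lesssim \llbracket F\rrbracket_\gamma(|t-s|^{\gamma}+|t-s|\,|\xi|^{1-\gamma})$ makes the high-frequency integral behave like $\int_{|\xi|\ge (t-s)^{-1}}|\xi|^{3-2\gamma-2H}\dd\xi$, which converges only if $\gamma+H>2$. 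You need an $L^2$-averaged treatment of high frequencies (e.g.\ Littlewood--Paley blocks, i.e.\ the local embedding $B^{\gamma-1}_{\infty,\infty}\hookrightarrow H^{1/2-H}$, valid precisely when $\gamma+H>3/2$), or an interpolation with an extra term of the type $\Vert f\Vert_\infty|t-s|^{H}$ as in Hairer--Li.

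The conditioning reduction also contains an unjustified assumption: you take $\beta$ independent of $\cF_0$. In the setting of \cite{hairer2019averaging} the filtration is generated by the past of the two-sided Wiener process driving $\beta$, so this independence fails; conditionally on $\cF_0$, $\beta$ is an independent Volterra Gaussian process plus an $\cF_0$-measurable drift which is smooth on $(0,T]$ but has a derivative singularity at $0$, and estimating the contribution of that drift (again via integration by parts against the kernel, which is where thresholds of the type $\gamma+H>3/2$ enter) is a substantial part of their proof rather than a routine reduction. For the way the proposition is used in this paper ($f_r=b(x+w_r)$ deterministic) this point is harmless, and your Gaussian $L^p$--$L^2$ and Jensen steps are fine, but as a proof of the statement for general $\cF_0$-measurable $f$ it is a genuine gap.
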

\begin{rem}
The rather elegant point of the above lemma is that it extends the class of integrands with respect to fBm to distributions $f\in \cS'(\mathbb{R})$ such that $\int_0^\cdot f_r\dd r \in  C^\gamma_t$ for some $\gamma>1/2$. It immediately extends to the case $f\in \cS'(\mathbb{R};\mathbb{R}^{d\times m})$ by reasoning component-wise. Keeping in mind that our interest is in averaging operators, by setting $f_r=\tau^{w_r}b(x)$ for some continuous path $w$, $\int_s^t f_r \dd \beta_r$ is a well defined random variable in $L^p(\Omega)$ as long as $\int_0^\cdot \tau^{w_r} b(x)\dd r = \int_0^\cdot b(x+w_r)\dd r = T^w b(\cdot,x)$ belongs to $C^\gamma_t$.
\end{rem}

\begin{lem}\label{lem: Lp bounds for mult. avg. Op.}
Let  $b\in \mathcal{S}(\mathbb{R}^d;\mathbb{R}^{d\times m})$, $\beta$ be an fBm of parameter $H>1/2$ and $w\in C^\delta_t$ a deterministic path such that $H+\delta>1$.
Define the multiplicative averaged field $\Gamma^w b$ pathwise as in the previous section; namely, for any $\omega\in \Omega$ such that $\beta(\omega)\in C^{H-}_t$, set
\begin{equation}\label{eq: m avg field}
    \Gamma^w_{s,t}b(x)(\omega) := \int_s^t b(x+w_r)\dd \beta_r(\omega). 
\end{equation}
Then for any $p\geq 2$ and $\gamma>1-H$ we have the following estimates:
\begin{equation*}
\begin{aligned}
{\rm \textit{i.}}& \qquad     \|\Gamma_{s,t}^w b(x)\|_{L^p(\Omega)}
\lesssim \| T^w b\|_{\gamma,\eta} |t-s|^{H+\gamma-1},
    \\
 {\rm \textit{ii.}}& \qquad   \|\Gamma_{s,t}^w b(x)-\Gamma_{s,t}^w b(y)\|_{L^p(\Omega)}
 \lesssim \| T^w b\|_{\gamma,\eta} |x-y|^\eta|t-s|^{H+\gamma-1},
    \\
   {\rm \textit{iii.}}& \qquad  \| \nabla\Gamma_{s,t}^w b(x)- \nabla\Gamma_{s,t}^w b(y)\|_{L^p(\Omega)}
   \lesssim \| T^w b\|_{\gamma,1+\eta} |x-y|^{\eta}|t-s|^{H+\gamma-1}.
\end{aligned}
\end{equation*}

\end{lem}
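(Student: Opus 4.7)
The plan is to apply the Hairer--Li inequality (Proposition \ref{prop: lemma 3.4 HairerLi}) componentwise with carefully chosen integrands $f_r$, exploiting the fact that $w$ is deterministic. Since $w$ is deterministic, every integrand we construct will itself be deterministic, hence trivially $\mathcal{F}_0$-measurable; moreover the Hölder seminorm of $\int_0^\cdot f_r\, dr$ needs no expectation and is just the ordinary $C^\gamma_t$-norm. This removes the only probabilistic subtlety in applying the Hairer--Li bound, reducing the three claims to a careful identification of $\int_0^\cdot f_r\, dr$ with increments of $T^w b$.

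For estimate (i), fix $x\in \mathbb{R}^d$ and take $f_r := b(x+w_r)$. By construction $\int_0^t f_r\, dr = T^w_{0,t} b(x) = T^w b(t,x)$, and since $T^w b(0,x)=0$ it follows that
\begin{equation*}
\Big\| \int_0^\cdot f_r\, dr \Big\|_{C^\gamma_t} = \|T^w b(\cdot,x)\|_{C^\gamma_t} \leq \|T^w b\|_{\gamma,\eta}
\end{equation*}
uniformly in $x$. Inserting this bound into Proposition \ref{prop: lemma 3.4 HairerLi} produces (i) at once. For (ii), apply the same argument to $f_r := b(x+w_r)-b(y+w_r)$, so that $\int_0^\cdot f_r\, dr = T^w b(\cdot,x)-T^w b(\cdot,y)$; the spatial $\eta$-Hölder regularity of $T^w_{s,t} b$ yields
\begin{equation*}
\Big\| \int_0^\cdot f_r\, dr \Big\|_{C^\gamma_t} \leq |x-y|^\eta\, \|T^w b\|_{\gamma,\eta},
\end{equation*}
and Hairer--Li gives (ii).

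Estimate (iii) reduces to (ii) applied to $\nabla b$ in place of $b$. Indeed, by Lemma \ref{lem properties averaging}(i) averaging commutes with spatial differentiation, so $\nabla \Gamma^w_{s,t} b = \Gamma^w_{s,t}\nabla b$; the analogous fact $\nabla T^w b = T^w \nabla b$ holds by Lemma \ref{lem: properties of classical averagaed field}. Then (ii), applied to $\nabla b$, produces the desired bound with the right-hand side $\|T^w \nabla b\|_{\gamma,\eta} = \|\nabla T^w b\|_{\gamma,\eta} \leq \|T^w b\|_{\gamma,1+\eta}$, which is precisely (iii).

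The matrix/vector bookkeeping is routine: since $b$ is $\mathbb{R}^{d\times m}$-valued and $\beta$ is $\mathbb{R}^m$-valued, the stochastic integral splits into $m$ scalar Young integrals against the independent fBm components, and Hairer--Li applies to each; the constants absorb into the final $C=C(p,\gamma,H,T)$. I do not anticipate any substantive obstacle — the content of the lemma is essentially bookkeeping around the Hairer--Li estimate, and the only thing to note is that because the integrand is deterministic we may take the Hölder-norm random variable appearing in Proposition \ref{prop: lemma 3.4 HairerLi} to be a deterministic quantity, allowing $p$ to range over $[2,\infty)$ freely.
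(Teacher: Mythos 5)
Your proof is correct and follows essentially the same route as the paper: both apply the Hairer--Li estimate (Proposition \ref{prop: lemma 3.4 HairerLi}) with the deterministic integrands $f_r=b(x+w_r)$ and $f_r=b(x+w_r)-b(y+w_r)$, identifying $\int_0^\cdot f_r\,\mathd r$ with $T^w b(\cdot,x)$ and $T^w b(\cdot,x)-T^w b(\cdot,y)$, and obtain~\textit{iii.} via the commutation $\nabla\Gamma^w b=\Gamma^w\nabla b$ applied to $\nabla b$. The only additions (the $\mathcal{F}_0$-measurability remark and the componentwise bookkeeping) are harmless and implicit in the paper's argument.
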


\begin{proof}
The results are a direct application of Proposition~\ref{prop: lemma 3.4 HairerLi}. It follows from the assumptions that $\Gamma^w b\in C^{H-}_t C^\alpha_x$, as well as $T^w b\in C^\gamma_t C^\alpha_x$, for any $\alpha\in\mathbb{R}$ and $\gamma\in [0,1]$; for any $p\geq 2$ it holds
\begin{align*}
   \Vert \Gamma^w_{s,t} b(x) \Vert_{L^p(\Omega)}
   & = \Big\Vert \int_s^t b (x+w_r)\dd \beta_r \Big\Vert_{L^p(\Omega)}\\
   & \lesssim
   \Big\Vert \int_0^\cdot b (x+w_r) \dd r\Big\Vert_{\gamma} |t-s|^{H+\gamma-1}\\
   & \sim \Vert T^w b(\cdot,x)\Vert_{\gamma} |t-s|^{H+\gamma-1}, 
\end{align*}
which implies that point~\textit{i.} holds. Similarly, for any $x,y\in\mathbb{R}^d$ we have
\begin{align*}
     \|\Gamma_{s,t}^w b(x)-\Gamma_{s,t}^w b(y)\|_{L^p(\Omega)}
     & \lesssim \Vert T^w b(\cdot,x)-T^w b(\cdot,y)\Vert_\gamma |t-s|^{H+\gamma-1}\\
     & \lesssim \Vert T^w b\Vert_{\gamma,\eta} |x-y|^\eta |t-s|^{H+\gamma-1}.
\end{align*}
Point~\textit{iii.} follows from the fact that $\nabla \Gamma^w b= \Gamma^w \nabla b$ and an application of points~\textit{i.} and~\textit{ii.} with $b$ replaced by $\nabla b$.

\end{proof}

In order to provide a control on the joint space-time regularity of $\Gamma^w b$ in terms of that of $T^w b$, we need to combine Lemma~\ref{lem: Lp bounds for mult. avg. Op.} with a suitable modification of the classical Garsia-Rodemich-Rumsey (GRR) Lemma; %
a direct application of the results from~\cite{garsia1970real} is not enough, as it only provides local estimates, while
the theory outlined in Section~\ref{sec:Non-linear Young integration and equations} requires the additional growth condition $\Gamma^w b\in C^\gamma_t C^{1+\eta,\lambda}_x$.

Recall that for general $A:[0,T]\times \mathbb{R}^d\to \mathbb{R}^d$ it holds
\[
\Vert A\Vert_{\gamma,\eta,\lambda} \lesssim \llbracket A \rrbracket_{\gamma,\eta,\lambda} + \Vert A(\cdot,0)\Vert_\gamma
\]
where by definition
\[
\llbracket A\rrbracket_{\gamma,\eta,\lambda} = \sup_{0\leq s<t\leq T} \frac{\llbracket A_{s,t} \rrbracket_{\eta,\lambda}}{|t-s|^\gamma},
\]
and we recall that for $f:\mathbb{R}^d\to\mathbb{R}^d$, the weighted H\"older seminorm is given by
\begin{equation}\label{eq:def of weighted holder space}
\llbracket f \rrbracket_{\eta,\lambda}
:=\sup_{R\geq 1} R^{-\lambda}\, \llbracket f \rrbracket_{\eta,\lambda,R}
= \sup_{R\geq 1}\, \sup_{x,y\in B_R;\, x\neq y} \frac{|f(x)-f(y)|}{R^\lambda\, |x-y|^\eta}.
\end{equation}
In order to establish $C^\gamma_t C^{\eta,\lambda}_x$-regularity of random fields, we need the following lemma.
%
\begin{lem}\label{lem GRR}
Let $\{A(t,x):t\in  [0,T],x\in\RR^d \}$ be a family of $\RR^d$-valued random variables satisfying the following condition for some $\kappa>0$ and $p\geq 1$:
\begin{equation}\label{lem GRR hypothesis}
\EE [|A_{s,t}(x)-A_{s,t}(y)|^p] \leq \kappa |t-s|^{1+\beta_1} |x-y|^{d+\beta_2} \quad \forall\, 0\leq s\leq t\leq T,\, x,y\in\RR^d.
\end{equation}
Then for any $\gamma,\eta,\lambda\in (0,1)$ such that
\begin{equation*}
\gamma<\frac{\beta_1}{p},\quad \eta<\frac{\beta_2}{p},\quad \lambda>\frac{\beta_2+d}{p}-\eta,
\end{equation*}
there exists a constant $C=C(\eta,\gamma,\lambda,\beta_1,\beta_2,p,d)$ and a continuous modification of $A$ such that
\begin{equation}\label{lem GRR conclusion}
\EE\Big[ \llbracket A \rrbracket_{\gamma,\eta,\lambda}^p\Big]\leq C\,\kappa.
\end{equation}

\end{lem}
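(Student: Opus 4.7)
The strategy is to chain two applications of the classical Garsia--Rodemich--Rumsey (equivalently Kolmogorov--Chentsov) lemma, first in space and then in time, and finally to pay for the growth of the resulting $R$-dependent bound by a dyadic summation against the weight $R^{-\lambda p}$.

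First I would fix $s<t$ and apply GRR in the spatial variable to $x\mapsto A_{s,t}(x)$ on $B_R$ for dyadic $R\geq 1$. Rescaling $\tilde A(u):=A_{s,t}(Ru)$ to $B_1$, the hypothesis~\eqref{lem GRR hypothesis} is of Kolmogorov type on $B_1\subset\mathbb{R}^d$, and the classical GRR lemma combined with the homogeneity identity $\llbracket\tilde A\rrbracket_{\eta,B_1}=R^\eta\llbracket A_{s,t}\rrbracket_{\eta,R}$ then yields, for every $\eta<\beta_2/p$, a continuous spatial modification on $B_R$ satisfying
\[
\mathbb{E}\bigl[\llbracket A_{s,t}\rrbracket_{\eta,R}^p\bigr]\lesssim R^{d+\beta_2-\eta p}\,\kappa\,|t-s|^{1+\beta_1}.
\]

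I would then view $t\mapsto A_{0,t}$ as a stochastic process with values in the (separable) seminormed space $E_R:=(C^\eta(B_R;\mathbb{R}^d),\llbracket\cdot\rrbracket_{\eta,R})$. Applied to the increments $A_{s,t}=A_{0,t}-A_{0,s}$, the previous display reads $\mathbb{E}\|A_{0,t}-A_{0,s}\|_{E_R}^p\lesssim R^{d+\beta_2-\eta p}\,\kappa\,|t-s|^{1+\beta_1}$, and the Banach-valued version of GRR in time on $[0,T]$, valid as soon as $\gamma p<\beta_1$, produces a continuous $E_R$-valued modification together with the pathwise bound
\[
\mathbb{E}\bigl[\llbracket A\rrbracket_{\gamma,\eta,R}^p\bigr]=\mathbb{E}\Bigl[\sup_{0\leq s<t\leq T}\frac{\llbracket A_{s,t}\rrbracket_{\eta,R}^p}{|t-s|^{\gamma p}}\Bigr]\lesssim R^{d+\beta_2-\eta p}\,\kappa.
\]
To upgrade this to the weighted seminorm, set $R_k:=2^k$ and use monotonicity of $R\mapsto\llbracket A\rrbracket_{\gamma,\eta,R}$ to obtain, by a standard dyadic comparison,
\[
\llbracket A\rrbracket_{\gamma,\eta,\lambda}^p=\sup_{R\geq 1}R^{-\lambda p}\llbracket A\rrbracket_{\gamma,\eta,R}^p\lesssim\sum_{k\geq 0}2^{-k\lambda p}\,\llbracket A\rrbracket_{\gamma,\eta,R_{k+1}}^p,
\]
whence, taking expectations,
\[
\mathbb{E}\bigl[\llbracket A\rrbracket_{\gamma,\eta,\lambda}^p\bigr]\lesssim\kappa\sum_{k\geq 0}2^{-k(\lambda p-d-\beta_2+\eta p)},
\]
a geometric series which converges under precisely the hypothesis $\lambda>(d+\beta_2)/p-\eta$.

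The only genuine technical point is the compatibility between the two GRR applications: the temporal step needs $(s,t)\mapsto\llbracket A_{s,t}\rrbracket_{\eta,R}$ to be a measurable real-valued process, which is arranged in the standard way by first producing the spatial continuous modification on a countable dense subset of $[0,T]\times B_R$ and extending by continuity using the spatial moment bound. Gluing the modifications obtained for each dyadic $R$ in the obvious consistent way yields the final continuous modification of $A$ on $[0,T]\times\mathbb{R}^d$ satisfying~\eqref{lem GRR conclusion}.
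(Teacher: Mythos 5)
Your proof is correct and follows essentially the same route as the paper's: classical GRR applied in space on balls $B_R$, a dyadic summation in $R$ to control the weighted seminorm (converging exactly under $\lambda>(\beta_2+d)/p-\eta$), and a Kolmogorov/GRR argument in the time variable with values in a H\"older space. The only difference is the order of the last two steps---the paper first assembles the weighted spatial seminorm $\llbracket A_{s,t}\rrbracket_{\eta,\lambda}$ for each fixed $(s,t)$ and then applies the time criterion in $C^{\eta,\lambda}_x$, whereas you apply the time criterion per ball and sum dyadically at the end---which is immaterial.
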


\begin{proof}
Existence of a jointly continuous modification of $A$ which is locally H\"older continuous follows from classical application of GRR lemma, so we only need to focus on estimate~\eqref{lem GRR conclusion}. We can assume $A$ to take values in $\RR$, as the general case follows reasoning componentwise. We will first prove the following claim: if $b$ is a continuous random field such that
\[ \EE[|b(x)-b(y)|^p]\leq \kappa |x-y|^{d+\beta} \quad \forall\,x,y\in \RR^d,
\]
then for any $\eta<\beta/p$ and $\lambda$ such that $\eta+\lambda<(\beta+d)/p$, then  $b\in C^{\eta,\lambda}_x$ and there exists a constant $c_1=c_1(d,p,\eta,\beta)$ such that
\begin{equation}\label{lem GRR claim}
\EE \left[\llbracket b\rrbracket_{\eta,\lambda}^p\right] \leq c_1\, \kappa.
\end{equation}
Indeed by the classical GRR lemma, for any continuous function $f$, there exists a constant $c_2=c_2(d,\eta,\beta,p)$ which is independent of $R$ such that
\[ \llbracket f\rrbracket_{\eta,R}^p = \bigg(  \sup_{x,y\in B_R;\, x\neq y} \frac{|f(x)-f(y)|}{|x-y|^\eta} \bigg)^p \leq c_2 \int_{B_R\times B_R} \frac{|f(x)-f(y)|^p}{|x-y|^{2d+\eta p}} \,\mathd x \mathd y.
\]
Applied to the field $b$, this implies that for any $R> 0$ it holds
\[ \EE\left[ R^{-\lambda p} \llbracket b \rrbracket_{\eta,R}^p\right]
\leq c_2\, \kappa\, R^{-\lambda p} \int_{B_R\times B_R} |x-y|^{\beta-\alpha p-d} \,\mathd x\mathd y = c_1\, \kappa\,  R^{\beta+d-\eta p-\lambda p}.
  \]
for any $\eta<\beta/p$. Now consider the sequence $R=2^n$ with $n\in \NN$, then
\begin{align*}
\EE\left[ \bigg( \sup_{R=2^n, n\in\NN} R^{-\lambda} \llbracket b \rrbracket_{\eta,R}\bigg)^p\right]
\leq \EE\left[ \sum_{R=2^n} R^{-\lambda p}  \llbracket b \rrbracket_{\eta,R}^p\right]
\leq c_1\, \kappa\, \sum_n 2^{n(\beta+d-\eta p-\lambda p)}
\leq c_3\, \kappa
\end{align*}
for some $c_3=c_3(d,\eta,\beta,\lambda,p)$, under the condition $\beta+d-\eta p-\lambda p<0$. Finally, for any $R\geq 1$, choosing $n\in\NN$ such that $2^n \leq R<2^{n+1}$, it holds
\begin{equation*}
R^{-\lambda} \llbracket b\rrbracket_{\eta,R}
\leq R^{-\lambda} \llbracket b\rrbracket_{\eta,2^{n+1}}
\leq R^{-\lambda}\, 2^{\lambda(n+1)} \sup_{r=2^m, m\in\NN} r^{-\lambda} \llbracket b \rrbracket_{\eta,r}
\leq 2^\lambda \sup_{r=2^m, m\in\NN} r^{-\lambda} \llbracket b \rrbracket_{\eta,r}
\end{equation*}
which combined with the previous estimates implies the claim~\eqref{lem GRR claim}. In order to conclude, observe that for any $s\leq t$, applying the above to $b=A_{s,t}$, by hypothesis~\eqref{lem GRR hypothesis} we obtain
\begin{equation*}
\EE\left[\llbracket A_{s,t}\rrbracket_{\eta,\lambda}^p\right] \leq c_1\, \kappa\, |t-s|^{1+\beta_1}
\end{equation*}
and the conclusion follows by applying classical Kolmogorov continuity criterion.
\end{proof}

\subsection{Proof of Theorem~\ref{thm: main result sec3} }
We now have all the ingredients to complete the proof of the main result of this section. We start by showing that estimate~\eqref{eq: main thm sec3 estimate} is true when $b$ and $w$ are taken sufficiently regular.

\begin{lem}\label{cor: regualrity of multiplcative field}
Let $b^1$, $b^2$, $w^1$, $w^2$, $\beta$ be as in Lemma~\ref{lem: Lp bounds for mult. avg. Op.}, $\gamma>1-H$ and $\eta\in (0,1)$ fixed parameters. Then for any choice of $(p,\gamma',\eta',\lambda)$ such that
\[
p\geq 2,\quad \gamma'<\gamma+H-1,\quad \eta'<\eta,\quad \lambda>0,
\]
there exists a constant $C$ (which depends on $d$, $T$ and the parameters above) such that
\begin{equation}\label{estimate regularity multiplicative averaging}
    \EE\big[\, \big\Vert \Gamma^{w^1} b^1-\Gamma^{w^2} b^2\big\Vert_{\gamma',\eta',\lambda}^p \big]
    \leq C\, \big\Vert T^{w^1}b^1-T^{w^2} b^2\big\Vert^p_{\gamma,\eta}.
\end{equation}
\end{lem}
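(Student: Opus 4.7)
The plan is to combine three ingredients: the linearity of $b \mapsto \Gamma^w b$, which converts the difference $\Gamma^{w^1}b^1 - \Gamma^{w^2}b^2$ into a single Wiener integral; the Hairer--Li moment bound of Proposition \ref{prop: lemma 3.4 HairerLi}, which controls $L^{\tilde p}(\Omega)$-norms of such integrals by the H\"older norm of their time-primitive; and the modified Garsia--Rodemich--Rumsey Lemma \ref{lem GRR}, which promotes pointwise space-time moment bounds into the weighted H\"older estimate \eqref{estimate regularity multiplicative averaging}. Throughout, set
\[
A_{s,t}(x) := \Gamma^{w^1}_{s,t}b^1(x) - \Gamma^{w^2}_{s,t}b^2(x), \qquad \kappa_0 := \|T^{w^1}b^1 - T^{w^2}b^2\|_{\gamma,\eta}.
\]

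By linearity and the pointwise Young representation \eqref{sectemp pointwise averaging definition},
\[
A_{s,t}(x) = \int_s^t \bigl[b^1(x+w^1_r) - b^2(x+w^2_r)\bigr]\,\mathd \beta_r,
\]
whose integrand is deterministic (hence trivially $\mathcal{F}_0$-adapted), and whose time-primitive $\int_0^\cdot[\cdot]\mathd r$ equals $(T^{w^1}b^1 - T^{w^2}b^2)(\cdot,x)$. Applying Proposition \ref{prop: lemma 3.4 HairerLi} componentwise to this integrand, and to the difference of integrands at two spatial points $x,y$, gives for every $\tilde p \geq 2$
\[
\|A_{s,t}(x)\|_{L^{\tilde p}} \lesssim \kappa_0\,|t-s|^{H+\gamma-1}, \qquad \|A_{s,t}(x) - A_{s,t}(y)\|_{L^{\tilde p}} \lesssim \kappa_0\,|x-y|^\eta\,|t-s|^{H+\gamma-1},
\]
where the spatial-increment bound uses $\bigl\|(T^{w^1}b^1 - T^{w^2}b^2)(\cdot,x) - (T^{w^1}b^1 - T^{w^2}b^2)(\cdot,y)\bigr\|_\gamma \leq \kappa_0 |x-y|^\eta$, which follows from the definition of the $C^\gamma_t C^\eta_x$-norm together with the vanishing of the primitive at $t=0$.

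Raising the spatial-increment estimate to the $\tilde p$-th power puts us exactly in the hypotheses of Lemma \ref{lem GRR} with $1+\beta_1 = (H+\gamma-1)\tilde p$, $d+\beta_2 = \eta \tilde p$, and $\kappa \sim \kappa_0^{\tilde p}$. For a target triple $(\gamma', \eta', \lambda)$ with $\gamma' < H+\gamma-1$, $\eta' < \eta$, and $\lambda$ in the admissible regime $\lambda > \eta - \eta'$ (for arbitrary $\lambda > 0$ this is reached by first replacing $\eta'$ with an intermediate $\eta'' \in (\eta - \lambda, \eta)$), a sufficiently large $\tilde p$ makes all three conditions of Lemma \ref{lem GRR} hold; the lemma then yields
\[
\EE\bigl[\llbracket A \rrbracket_{\gamma', \eta', \lambda}^{\tilde p}\bigr] \lesssim \kappa_0^{\tilde p}.
\]
The complementary bound on $\llbracket A(\cdot,0)\rrbracket_{\gamma'}$, needed to recover the full norm $\|A\|_{\gamma',\eta',\lambda}$ (which includes $|A_{s,t}(0)|$), follows from the pointwise moment estimate at $x=0$ via the classical Kolmogorov continuity theorem in the time variable alone. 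Jensen's inequality extends the estimate from $L^{\tilde p}$ to any $L^p$ with $p \in [2, \tilde p]$, yielding \eqref{estimate regularity multiplicative averaging}.

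Higher spatial regularity (with $\eta'$ and $\eta$ replaced by $n+\eta'$ and $n+\eta$) follows by applying the same argument to $\nabla^k A = \Gamma^{w^1}(\nabla^k b^1) - \Gamma^{w^2}(\nabla^k b^2)$, permitted by Lemma \ref{lem properties averaging}, after observing that $T^{w^i}(\nabla^k b^i) = \nabla^k T^{w^i}b^i$. The principal technical point is the careful matching of parameters in Lemma \ref{lem GRR}: the small losses of order $1/\tilde p$ and $d/\tilde p$ collapse as $\tilde p \to \infty$, leaving only the geometric constraint $\lambda > \eta - \eta'$, which is systematically accommodated by pushing $\eta'$ close enough to $\eta$.
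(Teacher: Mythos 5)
Your overall route is the same as the paper's (write the difference as a single Wiener integral with deterministic integrand whose time-primitive is $T^{w^1}b^1-T^{w^2}b^2$, apply the Hairer--Li bound of Proposition~\ref{prop: lemma 3.4 HairerLi} pointwise in space, then feed the resulting moment estimates into the modified GRR Lemma~\ref{lem GRR}, treat the origin by Kolmogorov in time, use Jensen for small $p$, and differentiate for higher regularity), and most of it is fine. But there is a genuine gap in your parameter bookkeeping for small $\lambda$. Applying Lemma~\ref{lem GRR} with the full spatial exponent $\eta$ forces, in the limit $\tilde p\to\infty$, the constraint $\lambda>\eta-\eta'$, and your proposed fix --- proving the bound for an intermediate exponent $\eta''\in(\eta-\lambda,\eta)$ and then ``replacing $\eta'$'' --- does not work: the weighted spaces are not nested in the way you need. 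Lowering the H\"older exponent from $\eta''$ to $\eta'<\eta''$ on the ball $B_R$ costs a factor of order $R^{\eta''-\eta'}$ (consider $f(x)=|x|^{\eta''}$, which lies in $C^{\eta'',\lambda}_x$ for every $\lambda\ge 0$ but in $C^{\eta',\lambda}_x$ only for $\lambda\ge\eta''-\eta'$), so an estimate on $\llbracket A\rrbracket_{\gamma',\eta'',\lambda}$ only controls $\llbracket A\rrbracket_{\gamma',\eta',\lambda+(\eta''-\eta')}$, and you are back to requiring $\lambda>\eta-\eta'$. Hence your argument proves the lemma only in the regime $\lambda>\eta-\eta'$, not for arbitrary $\lambda>0$ and $\eta'<\eta$ as stated.

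The missing idea is the interpolation step the paper performs \emph{before} invoking GRR: combining the uniform-in-$x$ bound (point~\textit{i.} of Lemma~\ref{lem: Lp bounds for mult. avg. Op.}, applied to the difference) with the increment bound (point~\textit{ii.}) gives, for every $\theta\in[0,1]$,
\begin{equation*}
\Vert A_{s,t}(x)-A_{s,t}(y)\Vert_{L^{\tilde p}}\lesssim \kappa_0\,|t-s|^{H+\gamma-1}|x-y|^{\theta\eta},
\end{equation*}
i.e.\ the spatial exponent can be lowered at the level of the moment estimate at no cost in $R$. Applying Lemma~\ref{lem GRR} with exponent $\theta\eta$ in place of $\eta$, the constraints become $\eta'<\theta\eta-d/\tilde p$ and $\lambda>\theta\eta-\eta'$ (up to $O(1/\tilde p)$), and choosing $\theta$ so that $\theta\eta\in(\eta',\min\{\eta,\eta'+\lambda\})$ together with $\tilde p$ large accommodates every $\eta'<\eta$ and every $\lambda>0$. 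With this modification inserted where you currently invoke the intermediate $\eta''$, the rest of your proof goes through; your reduction of the two-path difference to a single integral is in fact a slightly cleaner justification of the paper's brief ``by linearity'' remark.
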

\begin{proof}
As the multiplicative averaging acts linearly, it suffices to show the statement for a single $T^w b$ as above. Interpolating the bounds \textit{i.-ii.} of Lemma~\ref{lem: Lp bounds for mult. avg. Op.}, we see that for any $\theta\in [0,1]$ it holds
\begin{equation*}
    \|\Gamma_{s,t}^wb(x)-\Gamma_{s,t}^wb(y)\|_{L^p(\Omega)} \lesssim \Vert T^w b\Vert_{\gamma,\eta} |t-s|^{H+\gamma-1}|x-y|^{\theta\eta}\qquad \forall\,p\geq 2,\,\,  x,y\in \RR^d.
\end{equation*}
Therefore $\Gamma_{s,t}^w b$ satisfies condition \eqref{lem GRR hypothesis} for the choice $\beta_1=p(H+\gamma-1)-1$ and $\beta_2=p\theta\eta-d$; since $p$ can be chosen arbitrarily large, we conclude by Lemma \ref{lem GRR} that for any 
$$
\gamma'<H+\gamma-1,\qquad \eta'<\theta\eta,\qquad \lambda>\eta(1-\theta) , 
$$
it holds
\[ \EE\big[\, \Vert \Gamma^w b\Vert_{\gamma',\eta',\lambda}^p\big]\leq C \Vert T^w b \Vert_{\gamma,\eta}^p; \]
observe that we can take $\theta$ arbitrarily close to $1$, so that $\eta'$ is arbItrarily close to $\eta$ and $\lambda$ is arbitrarily small.
\end{proof}

\begin{proof}[Proof of Theorem~\ref{thm: main result sec3}]
The proof is divided in two natural steps: we will first show that, thanks to Lemma~\ref{cor: regualrity of multiplcative field}, we can extend the definition of $\Gamma^w b$ to the case of regular $b$ and continuous (but not necessarily H\"older regular) $w$; then we will show that, under the assumption that $T^w b$ is sufficiently regular, the definition further extends to the case of distributional $b$.

\textit{Step 1.} Let $b\in C^1_b$, $\{w^n\}_n$ be a sequence in $C^\delta_t$, with $\delta+H>1$, such that $w^n\to w$ uniformly on $[0,T]$. Our aim is to show that the sequence $\Gamma^{w^n} b$ is Cauchy in a suitable weighted H\"older space and thus admits a unique limit, which we define to be $\Gamma^w b$. In particular, while we cannot define anymore the field $\Gamma^w b$ analytically as done in Section~\ref{sec3.1}, it is still well defined as a random variable.

Since $b\in C^1_b$, for any $n,m\in\NN$ we have the estimates
\begin{equation*}
    \left\lvert \int_s^t b(x+w^n_r) \mathd r-\int_s^t b(x+w^m_r) \mathd r\right\rvert 
    \leq \int_s^t \Vert b\Vert_{C^1_b} |w^n_r-w^m_r| \mathd r
    \leq \Vert w^n-w^m\Vert_\infty \Vert b\Vert_{C^1_b} |t-s|
\end{equation*}
and similarly, for fixed $n$ and any $x,y\in\RR^d$,
\begin{equation*}
    \left\lvert \int_s^t b(x+w^n_r) \mathd r-\int_s^t b(y+w^n_r) \mathd r\right\rvert 
    \leq | x-y| \Vert b\Vert_{C^1_x} |t-s|.
\end{equation*}
One can then apply triangular inequality and interpolate the two inequalities above to deduce that, for any $\eta\in (0,1)$, it holds
\begin{align*}
    \left\lvert T^{w^n}_{s,t} b(x)- T^{w^m}_{s,t}b(y) \right\rvert
    \lesssim \Vert b\Vert_{C^1_b} |x-y|^\eta \Vert w^n-w^m\Vert_{\infty}^{1-\eta} |t-s|.
\end{align*}
Since $w^n\to w$ uniformly in $[0,T]$, the sequence $\{w^n\}_n$ is Cauchy, and by the above estimate so is $\{T^{w^n}b\}_n$ in $C^\gamma_t C^{\eta}_x$, for any $\gamma,\eta<1$. Combined with~\eqref{estimate regularity multiplicative averaging}, this implies that for any $\gamma'<H$, $\eta'<\eta$, $\lambda>0$ and $p\in [2,\infty)$ it holds
\begin{equation*}
     \EE\big[ \big\Vert \Gamma^{w^n} b-\Gamma^{w^m} b\big\Vert_{\gamma',\eta',\lambda}^p\big]
     \lesssim \big\Vert T^{w^n}b-T^{w^m} b\big\Vert^p_{1,\eta'+\varepsilon}
     \lesssim \Vert b\Vert_{C^1_b} \Vert w^n -w^m\Vert_{\infty}^{1-\eta'-\varepsilon},
\end{equation*}   
where we chose $\varepsilon>0$ s.t. $\eta'+\varepsilon<1$. Therefore the sequence $\{\Gamma^{w^n}b\}_n$ is Cauchy in $L^p(\Omega; C^{\gamma'}_t C^{\eta',\lambda}_x)$ and it admits a unique limit, which we define to be $\Gamma^w b$. It follows from the estimates above that this is a good definition, as it does not depend on the chosen sequence $\{w_n\}_n$ such that $w_n\to w$.

More generally, by iterating the reasoning to $D^k b$ for $k\leq n$, the above procedure shows that if $b\in C^{n+1}_x$ and $w$ is a continuous path, then $\Gamma^w b$ belongs to $C^{\gamma'}_t C^{n+\eta',\lambda}_x$. 
By construction, inequality~\eqref{estimate regularity multiplicative averaging} still holds for any pairs $(w^i,b^i)$ with $w^i\in C^0_t$ and $b^i\in C^1_b$.

\textit{Step 2.} We now want to pass to the case in which $b$ is distributional, $w$ is continuous and $T^w b\in C^\gamma_t C^\eta_x$ (resp. $C^\gamma_t C^{n+\eta}_x$) for some $\gamma>1-H$.

By Lemma~\ref{lem: smooth approximation Twb} we can choose a family of mollifiers $\{\rho^\varepsilon\}_{\varepsilon>0}$, a parameter $\delta>0$ arbitrarily small and a sequence $\varepsilon_n\to 0$ such that setting $b_n=b^{\varepsilon_n}=\rho_{\varepsilon_n}\ast b$, it holds $T^w b_n\to T^w b$ in $C^{\gamma-\delta}_t C^{\eta-\delta}_x $. In particular, $\{T^w b_n\}_n$ is a Cauchy sequence in $C^{\gamma-\delta}_t C^{\eta-\delta}_x $ and choosing $\delta$ such that $\gamma-\delta>1-H$, by the previous step $\{\Gamma^w b_n\}_n$ are well defined random fields; moreover for any $\gamma'<\gamma+H-\delta-1$, $\eta'<\eta-\delta$, $\lambda>0$ and $p\in [2,\infty)$ they satisfy
\begin{equation*}
     \EE[ \big\Vert \Gamma^{w} b_n-\Gamma^{w} b_m\big\Vert_{\gamma',\eta',\lambda}^p]
     \lesssim \big\Vert T^w b_n-T^w b_m \big\Vert^p_{\gamma-\delta,\eta-\delta}.
\end{equation*}
This implies that $\{\Gamma^w b_n\}_n$ is a Cauchy sequence in $L^p (\Omega; C^{\gamma'}_t C^{1+\eta',\lambda}_x)$ and thus admits a unique limit, which we define to be $\Gamma^w b$. It is clear from Lemma~\ref{lem: smooth approximation Twb} that $\Gamma^w b$ does not depend on the chosen family of mollifiers; more generally the above estimates imply that for any sequence of smooth functions $b^n$ s.t. $T^w b^n\to T^w b$ in $C^{\gamma-\delta}_t C^{\eta-\delta}_x$, the associated multiplicative averaged fields $\Gamma^w b_n$ must converge to $\Gamma^w b$. Moreover for any pair of random fields $\Gamma^{w_1} b_1$, $\Gamma^{w_2} b_2$ defined in this way, for $w^i$ continuous paths and $b_i$ possibly distributional fields, we have the inequality
\begin{equation*}
     \EE\big[\, \big\Vert \Gamma^{w_1} b_1-\Gamma^{w_2} b_2\big\Vert_{\gamma',\eta',\lambda}^p\big]
     \lesssim \big\Vert T^{w_1} b_1-T^{w_2} b_2 \big\Vert^p_{\gamma,\eta}.
\end{equation*}
which can be rephrased as the fact that the multiplicative averaging, seen as a map $T^w b\mapsto \Gamma^w b$ from $C^\gamma_t C^{\eta}_x$ to $L^p(\Omega; C^{\gamma'}_t C^{\eta',\lambda}_x)$, is linear and continuous.

The general case of $T^w b\in C^\gamma_t C^{n+\eta}_x$ follows as before by iterating the reasoning to the derivatives $D^k T^w b= T^w D^k b$.
\end{proof}

\begin{rem}
If $w\in C^\delta_t$ with $\delta+H>1$, the above procedure is consistent with the one from Section~\ref{sec3.1}, namely the random field $\Gamma^w b$ is a regular representative of the random distribution defined pathwise by means of Lemma~\ref{lem defn distributional averaging}.
\end{rem}

\begin{rem}
Several properties satisfied by the analytical definition of $\Gamma^w b$ from Lemma~\ref{lem properties averaging} extend by the approximation procedure to the more general definition of Theorem~\ref{thm: main result sec3}, once they are interpreted as equalities between random variables. For instance it is still true that, $K\in C^\infty_c$, $K\ast \Gamma^w b=\Gamma^w (K\ast b)$; similarly, if both $T^w b$ and $T^w \nabla b$ are regular enough, then $\Gamma^w \nabla b=\nabla \Gamma^w b$.
\end{rem}

\begin{rem}\label{rem: approximation mult avg}
The proof of Theorem~\ref{thm: main result sec3} also contains the following fact: if $T^w b\in C^\gamma_t C^{n+\eta}_x$, then it's possible to find a sequence $(b^n,w^n)$ with $b^n\in C^\infty_x$, $w^n\in C^1_t$ such that $b^n\to b$ in the sense of distributions, $w^n\to w$ in the uniform convergence and $\Gamma^{w^n} b^n\to \Gamma^w b$ in $L^p(\Omega;C^{\gamma'}_t C^{n+\eta',\lambda}_t)$ for any $\gamma'<\gamma+H-1$, $\eta'<\eta$ and $\lambda>0$.
\end{rem}

\section{Regularisation of SDEs by additive perturbations}\label{sec:young eq with multiplicative noise}

We are now ready to prove the regularizing effect of certain paths on SDEs with multiplicative noise. Towards this aim, we begin to motivate this section by showing that when $b$ is a smooth vector field, $w\in C^\delta_t$, and $t\mapsto \beta_t$ is a sample path of a fractional Brownian motion with $H\in (\frac{1}{2},1)$ such that $\delta+H>1$,  then  multiplicative SDEs formally given by
\begin{equation}
    \dd x_t=b(x_t) \dd \beta_t+\dd w_t, \qquad x_0\in \RR^d
\end{equation}
can be solved in the non-linear Young equations framework, outlined in section \ref{sec:Non-linear Young integration and equations}.  Just as in the non-multiplicative case, these results can then be generalised to allow for distributional drifts $b$, still under the assumption that $\delta+H>1$. These solutions preserves the natural notion of a pathwise solution, in the sense that if $\{b^n\}_{n}$ is a sequence of smooth functions approximating the distribution $b$ in a suitable distribution space, then the corresponding solutions $x^n\rightarrow x$ in $C^\delta_t$.

\subsection{Classical YDEs as averaged equations}\label{subsec:YDE with multiplicative noise}

The content of this section, similarly to that of Section~\ref{sec3.1}, is entirely analytic and holds even when $\beta$ is not sampled as an fBm but rather a given deterministic function.
For notational simplicity, we consider $\beta\in C^H_t$, but all the statements generalize to the case $\beta\in C^{H-}_t$, as the conditions on $H$ are always in the form of a strict inequality.

Let us briefly recall the setting: here $b\in \mathcal{D}(\mathbb{R}^d;\mathbb{R}^{d\times m})$ (mostly regular for the moment), $w\in C^\delta([0,T];\mathbb{R}^d)$ and $\beta\in C^H([0,T];\mathbb{R}^m)$; we look for a solution $x\in C([0,T];\mathbb{R}^d)$.

We start by showing that the nonlinear YDE formulation of the problem is a natural generalisation of the original one, whenever $b$ and $w$ are sufficiently regular.

\begin{prop}\label{sectemp prop smooth case}
Let $b\in C^2_b$, $w\in C^\delta_t$ and $\beta\in C^H_t$ with $H>1/2$, $H+\delta>1$. Then for any $x_0\in\RR^d$ there exists a unique solution $x\in C^\delta_t$ to the perturbed Young differential equation
\begin{equation}\label{sectemp YDE}
x_t = x_0 +\int_0^t b(x_s)\mathd\beta_s+w_t\quad\forall\,t\in [0,T];
\end{equation}
in particular,  $x=\theta+w$, where $\theta\in C^H_t$ is the unique solution to the nonlinear YDE
\begin{equation}\label{sectemp nonlinear YDE}
\theta_t =\theta_0 +\int_0^t \Gamma^wb (\mathd s,\theta_s).
\end{equation}
For any $\alpha\in(0,1)$ satisfying $H+\alpha\delta>1$ there exists a constant $C=C(\alpha,\delta,H,T)$ such that $\theta$ satisfies the a priori estimate 
\begin{equation}\label{sectemp smooth a priori bound 1}
\llbracket \theta\rrbracket_H \leq C (1+\Vert b\Vert_\alpha^2\, \llbracket\beta \rrbracket_H^2)(1+\llbracket w\rrbracket_\delta).
\end{equation}
\end{prop}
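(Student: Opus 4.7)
The plan is to use classical Young–SDE theory for existence and uniqueness of $x\in C^\delta_t$, identify $\theta:=x-w$ with the solution of the nonlinear YDE via an identification of the two integrals, and derive the a priori bound directly from the classical representation.

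For existence and uniqueness, since $b\in C^2_b$ and $w\in C^\delta_t$, $\beta\in C^H_t$ with $H>1/2$ and $H+\delta>1$, classical Young–SDE theory (e.g.~\cite{rascanu2002differential,Friz2014}) yields a unique $x\in C^\delta_t$. Setting $\theta:=x-w$, the standard Young estimate gives $\int_0^\cdot b(x_s)\mathd\beta_s\in C^H_t$, so $\theta\in C^H_t$ satisfies $\theta_t=\theta_0+\int_0^t b(\theta_s+w_s)\mathd\beta_s$ with $\theta_0=x_0-w_0$. To recast this as the nonlinear YDE, I would establish the identity $\int_s^t b(\theta_u+w_u)\mathd\beta_u=\int_s^t \Gamma^w b(\mathd u,\theta_u)$ by comparing Riemann sums. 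The key input is the sewing estimate from Lemma~\ref{lem defn distributional averaging},
\[
|\Gamma^w_{s,t}b(x)-b(x+w_s)\beta_{s,t}|\lesssim |t-s|^{H+\eta'\delta},
\]
with $\eta'\in(0,1]$ chosen so that $H+\eta'\delta>1$. Substituting $x=\theta_{t_i}$ and summing over any partition $\Pi$ of $[s,t]$, the difference between the two Riemann sums is bounded by $\sum_i O(|t_{i+1}-t_i|^{H+\eta'\delta})=O(|\Pi|^{H+\eta'\delta-1})\to 0$, so they share a common limit, giving the identity and the nonlinear YDE for $\theta$.

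For the a priori estimate I would work directly from $\theta_{s,t}=\int_s^t b(\theta_u+w_u)\mathd\beta_u$ via the standard Young integral bound: with $\eta_u:=\theta_u+w_u$ and $b\in C^\alpha_x$,
\[
|\theta_{s,t}|\leq \|b\|_\infty\llbracket\beta\rrbracket_H|t-s|^H + C\|b\|_\alpha\llbracket\eta\rrbracket_\delta^\alpha\llbracket\beta\rrbracket_H|t-s|^{H+\alpha\delta},
\]
valid under $H+\alpha\delta>1$. Using $\llbracket\eta\rrbracket_\delta\leq T^{H-\delta}\llbracket\theta\rrbracket_H+\llbracket w\rrbracket_\delta$ (via $C^H_t\hookrightarrow C^\delta_t$, which holds in the relevant case $\delta\leq H$), restricting to subintervals of length $\tau$ chosen small enough that Young's inequality absorbs the $\llbracket\theta\rrbracket_H^\alpha$ term, and concatenating $O(T/\tau)$ such subintervals via the standard local-to-global H\"older comparison, produces a polynomial bound in $\|b\|_\alpha\llbracket\beta\rrbracket_H$; the quadratic form $1+\|b\|_\alpha^2\llbracket\beta\rrbracket_H^2$ in the stated estimate is a convenient upper bound on the precise exponent emerging from the absorption. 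The main obstacle is the rigorous identification of the classical and nonlinear Young integrals, which requires tight control of the sewing error across the partition in the $H$-H\"older regime.
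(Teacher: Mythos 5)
Your overall route coincides with the paper's: reduce to $\theta=x-w$, identify the classical Young integral with the nonlinear Young one through the sewing estimate of Lemma~\ref{lem defn distributional averaging} (your Riemann-sum comparison is equivalent to the paper's observation that the two integrals share the same germ up to $O(|t-s|^{H+\delta})$, hence coincide), and derive the a priori bound by localization and absorption. One presentational imprecision: ``classical Young--SDE theory yields a unique $x\in C^\delta_t$'' is not a verbatim citation, since the additive perturbation may have $\delta<1/2$; the classical results apply after the substitution, to the time-inhomogeneous YDE $\theta_t=\theta_0+\int_0^t b(\theta_s+w_s)\,\mathd\beta_s$ with coefficient $\tilde b(t,z)=b(z+w_t)$, Lipschitz in $z$ and $\delta$-H\"older in $t$ (this is exactly how the paper argues, via \cite{rascanu2002differential,cong2018nonautonomous}); since you perform the substitution anyway, this is easily repaired. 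Your explicit restriction $\delta\le H$ for the embedding step matches what the paper implicitly uses.

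The genuine gap is the final quantitative step. You assert that the absorption ``produces a polynomial bound'' and that the quadratic form $1+\Vert b\Vert_\alpha^2\llbracket\beta\rrbracket_H^2$ is ``a convenient upper bound on the precise exponent'', but you never verify that the exponent is at most $2$, and with the absorption as you describe it, it need not be: applying Young's inequality to the term $\Vert b\Vert_\alpha\llbracket\beta\rrbracket_H\,\tau^{\alpha H}\llbracket\theta\rrbracket_{H,\tau}^\alpha$ produces a contribution of order $(\Vert b\Vert_\alpha\llbracket\beta\rrbracket_H\,\tau^{\alpha H})^{1/(1-\alpha)}$, whose degree in $\Vert b\Vert_\alpha\llbracket\beta\rrbracket_H$ exceeds $2$ as soon as $\alpha>1/2$, unless the window length is tuned against $\Vert b\Vert_\alpha\llbracket\beta\rrbracket_H$. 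The missing bookkeeping (which is the paper's): bound $u^\alpha\le 1+u$ instead of using Young's inequality, choose $\Delta$ so that $\Delta^{\alpha\delta}\Vert b\Vert_\alpha\llbracket\beta\rrbracket_H\sim 1$, obtaining $\llbracket\theta\rrbracket_{H,\Delta}\lesssim 1+\Vert b\Vert_\alpha\llbracket\beta\rrbracket_H+\llbracket w\rrbracket_\delta$, and then pass from local to global via $\llbracket\theta\rrbracket_H\lesssim\Delta^{H-1}\llbracket\theta\rrbracket_{H,\Delta}$ (Exercise~4.24 in \cite{Friz2014}); since $\Delta^{H-1}\sim(\Vert b\Vert_\alpha\llbracket\beta\rrbracket_H)^{(1-H)/(\alpha\delta)}$ and $(1-H)/(\alpha\delta)<1$ precisely because $H+\alpha\delta>1$, the total degree is $1+(1-H)/(\alpha\delta)<2$, which is what legitimizes the stated bound \eqref{sectemp smooth a priori bound 1}. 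Without tracking this exponent, the claimed quadratic estimate is not established.
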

\begin{proof}
It is easy to check that $x\in C^\delta_t$ solves~\eqref{sectemp YDE} iff $\theta=x-w\in C^\delta_t$ satisfies
\[
\theta_t
= \theta_0 + \int_0^t b(\theta_s+w_s)\dd \beta_s =\theta_0 + \int_0^t \tilde{b}(s,\theta_s)\dd \beta_s\qquad \forall\, t\in [0,T]
\]
where $\tilde{b}(t,z):=b(z+w_t)$; by properties of Young integrals, any such $\theta$ must also belong to $C^H_t$. The drift $\tilde{b}$ satisfies
\begin{equation*}
\big\vert \tilde{b}(t,z_1)-\tilde{b}(s,z_2)\big\vert
+ \big\vert \nabla\tilde{b}(t,z_1)-\nabla \tilde{b}(s,z_2)\big\vert
\lesssim \Vert b\Vert_{C^2_b} |z_1-z_2|+\Vert b\Vert_{C^2_b} \llbracket w\rrbracket_\delta |t-s|^\delta
\end{equation*}
which by classical results implies existence and uniqueness of solutions to the YDE associated to $\tilde{b}$ in the class $C^H_t$, see for instance Theorem~2.1 from~\cite{rascanu2002differential} or Section~3 from~\cite{cong2018nonautonomous}.\\
In order to show that $\theta$ solves~\eqref{sectemp nonlinear YDE}, it is enough to prove that $\int_0^\cdot b(w_s+\theta_s)\mathd\beta_s =\int_0^\cdot \Gamma^w b (\mathd s,\theta_s)$. Since $b\in C^2_b$ and $H+\delta>1$, by Lemma~\ref{lem defn distributional averaging} we have $\Gamma^w b\in C^H_t C^1_x$ and the nonlinear Young integral $\int_0^\cdot \Gamma^w b (\mathd s,\theta_s)$ is well defined (because $\theta\in C^H_t$ and $H>1/2$). By the respective definition of the two integrals, it holds
\begin{align*}
\bigg\vert \int_s^t b(w_r+\theta_r) & \mathd\beta_r -\int_s^t \Gamma^w b(\mathd r,\theta_s)\bigg\vert\\
& = \bigg\vert \int_s^t b(w_r+\theta_r)\mathd\beta_r \pm b(w_s+\theta_s)\beta_{s,t} \pm \Gamma^w_{s,t}b (\theta_s) -\int_s^t \Gamma^w b(\mathd r,\theta_s)\bigg\vert\\
& \lesssim |t-s|^{H+\delta} +\bigg\vert b(w_s+\theta_s)\beta_{s,t}-\int_s^t b(\theta_s+w_r)\mathd \beta_r\bigg\vert
\lesssim |t-s|^{H+\delta}
\end{align*}
which implies that they must coincide.

We now move on to prove \eqref{sectemp smooth a priori bound 1}.
For any $0<\Delta<T$, denote by $\llbracket \theta\rrbracket_{H,\Delta}$ (resp. $\llbracket \theta\rrbracket_{\delta,\Delta}$) the quantity
\[ \llbracket \theta\rrbracket_{H,\Delta} = \sup_{|t-s|\leq\Delta} \frac{\vert\theta_{s,t}\vert}{|t-s|^H}.\]
By properties of Young integrals, for any $s<t$ such that $|t-s|<\Delta$ it holds
\begin{align*}
\vert \theta_{s,t}\vert
& = \Big\vert \int_s^t b(w_r+\theta_r)\mathd \beta_r\Big\vert\\
& \lesssim \vert b(w_s+\theta_s)\beta_{s,t}\vert + |t-s|^{H+\alpha\delta} \llbracket b \rrbracket_\alpha \llbracket \beta\rrbracket_H \llbracket \theta+w\rrbracket^\alpha_{\delta,\Delta}\\
& \lesssim |t-s|^H \Vert b\Vert_\alpha \llbracket \beta\rrbracket_H + |t-s|^H \Delta^{\alpha\delta}\,  \llbracket b \rrbracket_\alpha \llbracket \beta\rrbracket_H (1+\llbracket w \rrbracket_\delta+\llbracket \theta\rrbracket_{\delta,\Delta})\\
& \lesssim |t-s|^H \Vert b\Vert_\alpha \llbracket \beta \rrbracket_H \big(1+ \Delta^{\alpha\delta} + \Delta^{\alpha\delta} \llbracket w\rrbracket_\delta \big) + |t-s|^H \Delta^{\alpha\delta} \Vert b\Vert_\alpha \llbracket \beta \rrbracket_H \llbracket \theta \rrbracket_{H,\Delta}.
\end{align*}
Dividing by $|t-s|^H$, taking the supremum over $|t-s|\leq \Delta$, we find $\kappa=\kappa(\alpha,\delta,H,T)$ s.t.
\[
\llbracket\theta\rrbracket_{H,\Delta} \leq \kappa \Vert b\Vert_\alpha \llbracket \beta \rrbracket_H \big(1+ \Delta^{\alpha\delta} + \Delta^{\alpha\delta} \llbracket w\rrbracket_\delta \big) +\kappa \Delta^{\alpha\delta} \Vert b\Vert_\alpha \llbracket \beta \rrbracket_H \llbracket \theta \rrbracket_{H,\Delta};
\]
choosing $\Delta$ such that $\kappa \Delta^{\alpha\delta} \Vert b\Vert_{C^\alpha_x} \llbracket \beta \rrbracket_{C^H_t} \leq 1/2$, $\kappa \Delta^{\alpha\delta} \Vert b\Vert_{C^\alpha_x} \llbracket \beta \rrbracket_{C^H_t} \sim 1$ we obtain
\begin{equation*}
\llbracket \theta \rrbracket_{H,\Delta}
\lesssim 1+\Vert b\Vert_\alpha \llbracket \beta \rrbracket_H + \llbracket w \rrbracket_\delta.
\end{equation*}
Applying Exercise~4.24 from~\cite{Friz2014} we deduce
\begin{equation*}
    \llbracket\theta\rrbracket_H
    \lesssim \Delta^{H-1} (1+\Vert b\Vert_\alpha \llbracket \beta \rrbracket_H + \llbracket w \rrbracket_\delta)\\
    \lesssim (\Vert b\Vert_\alpha \llbracket \beta \rrbracket_H)^{\frac{1-H}{\alpha\delta}}\, (1+\Vert b\Vert_\alpha \llbracket \beta \rrbracket_H + \llbracket w \rrbracket_\delta)
\end{equation*}
and the conclusion follows from the fact that $(1-H)/(\alpha\delta)<1$ by hypothesis.
\end{proof}

\subsection{General YDEs as averaged equations}

In the case $b$ is regular enough for the classical YDE~\eqref{sectemp YDE} to be meaningful, the nonlinear Young formalism still gives non trivial criteria in order to establish uniqueness of solutions, as the next proposition shows.
\begin{prop}\label{sectemp prop uniqueness mildly regular case}
Let $b\in C^\alpha_x$ for some $\alpha\in (0,1)$ such that $H+\alpha\delta>1$. Then for any $x_0\in \RR^d$ there exists at least one solution $x\in C^\delta_t$, $x\in w+C^H_t$ to the YDE~\eqref{sectemp YDE}. If $\Gamma^w b\in C^\gamma_t C^{1+\eta}_{x,loc}$ for some $\gamma,\eta\in (0,1)$ satisfying
\[\gamma+\eta H>1,\]
then such solution $x$ is unique in the class $w+C^H_t$.
\end{prop}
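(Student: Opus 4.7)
The plan is to reduce both existence and uniqueness to the nonlinear YDE formulation via the identity
\[
\int_0^t b(w_s+\theta_s)\dd\beta_s = \int_0^t \Gamma^w b(\dd s,\theta_s),
\]
valid for any $\theta\in C^H_t$ (not necessarily a solution). This extends what was shown for smooth $b$ inside the proof of Proposition~\ref{sectemp prop smooth case}, and I would prove it by mollification: for $b^n:=\rho^{\varepsilon_n}\ast b\in C^\infty_b$ the identity is already known, and we pass to the limit on both sides. The left-hand side is handled by stability of the classical Young integral: $b^n(w+\theta)\to b(w+\theta)$ uniformly while being uniformly bounded in $C^{\alpha\delta}_t$ (using $\Vert b^n\Vert_\alpha\leq \Vert b\Vert_\alpha$ and $\theta+w\in C^\delta_t$), so interpolation gives convergence in $C^{\alpha\delta-\varepsilon'}_t$, which combined with $H+\alpha\delta>1$ suffices. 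The right-hand side converges by Lemma~\ref{lem approximation averaging} ($\Gamma^w b^n\to \Gamma^w b$ in $C^{\gamma'}_t C^{\eta'}_{x,loc}$ for any $\gamma'<\gamma$, $\eta'<\eta$) together with continuity of the nonlinear Young integral (Theorem~\ref{secyoung thm existence young integral}(iv)).

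For existence, I would mollify $b$ as above and apply Proposition~\ref{sectemp prop smooth case} to each $b^n$, obtaining solutions $x^n=\theta^n+w$ with $\theta^n\in C^H_t$ and the a priori estimate~\eqref{sectemp smooth a priori bound 1}. Since $\Vert b^n\Vert_\alpha\leq \Vert b\Vert_\alpha$ uniformly, the $\theta^n$ are uniformly bounded in $C^H_t$, so Arzelà--Ascoli yields a subsequential limit $\theta\in C^H_t$. Passing to the limit in the classical Young formulation of the equation for $\theta^n$ proceeds exactly as in the identity step (with $\theta$ there replaced by $\theta^n$, then identified with its limit), giving that $x:=\theta+w\in C^\delta_t\cap(w+C^H_t)$ solves the YDE~\eqref{sectemp YDE}.

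Uniqueness is where the extra regularity $\Gamma^w b\in C^\gamma_t C^{1+\eta}_{x,loc}$ enters, and is the main technical step. Given two solutions $x^i=\theta^i+w\in w+C^H_t$, the identity established above shows that both $\theta^i$ satisfy the nonlinear YDE $\dd\theta=\Gamma^w b(\dd s,\theta)$ in $C^H_t$. A localization argument (in the spirit of Remark~2.10 of~\cite{Catellier2016}) reduces matters to $\Gamma^w b\in C^\gamma_t C^{1+\eta}_x$, and the difference $v=\theta^1-\theta^2$ then satisfies an affine nonlinear YDE driven by the integrated Jacobian
\[
V_t=\int_0^1\!\!\int_0^t \nabla\Gamma^w b(\dd s,\theta^2_s+\mu v_s)\,\dd\mu,
\]
as in Lemma~6 of~\cite{galeati2020noiseless}. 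A Young--Gronwall estimate (Lemma~19 of~\cite{galeati2020noiseless} or Section~6.2 of~\cite{lejay2010controlled}) then forces $v\equiv 0$. The assumption $\gamma+\eta H>1$ is precisely what makes the nonlinear Young integrals of $\nabla\Gamma^w b$ (which is only $\eta$-Hölder in $x$) against curves in $C^H_t$ well defined and amenable to the Gronwall step; verifying this compatibility is the place where the most care is required.
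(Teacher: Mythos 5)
Your proposal follows essentially the same route as the paper's proof: existence by mollification, the a priori bound of Proposition~\ref{sectemp prop smooth case} and Ascoli--Arzel\`a; identification of $\theta=x-w$ as a solution of the nonlinear YDE; and uniqueness by writing the difference of two solutions as a solution of an affine/linear YDE driven by the integrated Jacobian of $\Gamma^w b$ (Lemma~6 of~\cite{galeati2020noiseless}) and applying a Young--Gronwall argument, with $\gamma+\eta H>1$ ensuring all integrals are well defined. The only cosmetic difference is that you justify the identity between the classical and nonlinear Young integrals by a mollification/stability argument, while the paper compares the two sewing germs directly as in Proposition~\ref{sectemp prop smooth case}; both are valid.
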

\begin{proof}
The proof follows a similar reasoning to those from Section~4.1 of \cite{galeati2020noiseless}, so we will mostly sketch it.

\textit{Step 1: Existence.} Let $b^\varepsilon$ be a sequence of mollifications of $b$ and denote by $x^\varepsilon$ the unique solution of the YDE~\eqref{sectemp YDE} associated to $b^\varepsilon$ with initial data $x_0$. Then $x^\varepsilon=\theta^\varepsilon+w$ satisfy the a priori bound~\eqref{sectemp smooth a priori bound 1}, uniformly in $\varepsilon>0$ and so by Ascoli--Arzel\`a we can extract a subsequence $\theta^{\varepsilon_n}$ such that $\theta^{\varepsilon_n}\to \theta$ in $C^{H'}_t$ for any $H'<H$. Combining this fact with $b^{\varepsilon_n}\to b$ in $C^{\alpha'}_x$ for any $\alpha'<\alpha$, it is easy to check by the continuity properties of Young integrals that $x:=\theta+w$ must be a solution to the YDE associated to $b$, with initial data $x_0$.

\textit{Step 2: Averaging formulation.} Reasoning as in the proof of Proposition~\ref{sectemp prop smooth case}, it can be shown that $\theta$ is also a solution of~\eqref{sectemp nonlinear YDE}.

\textit{Step 3: Separation property.} Given any two solutions $x^1$, $x^2$ for the same initial data $x_0$, $x^i=\theta^i+w$ with $\theta^i\in C^H_t$, we claim that their difference $v=x^1-x^2=\theta^1-\theta^2$ satisfies a linear YDE of the form
\begin{equation}\label{sectemp proof uniqueness linear YDE}
\mathd v_t= v_t\cdot \mathd V_t,\quad V_t=\int_0^t \int_0^1 \nabla \Gamma^w b(\mathd s, \lambda \theta^1_s+(1-\lambda)\theta^2_s)\mathd\lambda.
\end{equation}
This follows from the general fact that for any $\theta^i$ as above and any $A\in C^\gamma_t C^{1+\eta}_{x,loc}$, it holds
\[\int_0^t A(\mathd s, \theta^1_s)-\int_0^t A(\mathd s,\theta^2_s) = \int_0^t (\theta^1_s-\theta^2_s)\cdot \mathd V[A]_s,\quad V[A]_t :=\int_0^1 \int_0^t \nabla A(\mathd s, \lambda \theta^1_s+(1-\lambda)\theta^2_s)d\lambda \]
which can be shown by going through the same proof as in Lemma~6 from~\cite{galeati2020noiseless}.

\textit{Step 4: Conclusion.} The difference $v=x^1-x^2$ satisfies a linear YDE with initial data $v_0=0$. Uniqueness for such equations is well known, thus necessarily $v\equiv 0$.
\end{proof}
Our general aim is to show that the introduction of suitable perturbations $w$ allows to restore existence and uniqueness for the SDE and provides a consistent solution theory even when $b$ is merely distributional;
the next lemmas show that, when it is possible to carry out this program, we can also recover our generalised solutions as limits of those associated to more classical YDEs of the form~\eqref{sectemp YDE} with regular coefficients.

\begin{lem}\label{secyoung lem approximations}
Consider sequences $b^n$ of regular functions (e.g. in $C^2_b$), $x^n_0\in \RR^d$ and $w^n\in C^\delta_t$ with $\delta+H>1$; denote by $x^n$ the unique solution starting from $x_0^n$ to the classical YDE
\begin{equation*}
\mathd x^n = b^n(x^n)\,\mathd \beta +\mathd w^n. 
\end{equation*}
Suppose that
\[
x^n_0\to x_0 \text{ in } \mathbb{R}^d, \quad w^n\to w \text{ in } C^0_t, \quad \Gamma^{w_n} b_n\to A \text{ in } C^{\gamma}_t C^{1+\eta,\lambda}_x
\]
where $\gamma,\eta,\lambda$ are parameters satisfying $\gamma>1/2$, $\gamma(1+\eta)>1$ and $\eta+\lambda\leq 1$. Then $x^n$ converge uniformly to $w+\theta$, where $\theta$ is the unique solution starting from $\theta_0:=x_0-w_0$ to the nonlinear YDE associated to $A$.
\end{lem}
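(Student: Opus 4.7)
The plan is to reduce the claim, via Proposition~\ref{sectemp prop smooth case}, to a stability statement for nonlinear YDEs, and then invoke the comparison Theorem~\ref{secyoung comparison theorem} on the drifts $\Gamma^{w^n}b^n$ and $A$.

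First I would observe that, since $b^n\in C^2_b$ and $w^n\in C^\delta_t$ with $\delta+H>1$, Proposition~\ref{sectemp prop smooth case} applies and yields the decomposition $x^n = \theta^n + w^n$, where $\theta^n\in C^H_t$ is the unique solution of the nonlinear YDE
\begin{equation*}
\theta^n_t = \theta^n_0 + \int_0^t \Gamma^{w^n}b^n(\mathd s,\theta^n_s),\qquad \theta^n_0:=x^n_0-w^n_0.
\end{equation*}
Under the hypothesis $\Gamma^{w^n}b^n\to A$ in $C^\gamma_t C^{1+\eta,\lambda}_x$ with $\gamma>1/2$, $\gamma(1+\eta)>1$ and $\eta+\lambda\le 1$, Theorem~\ref{secyoung thm existence and uniquness of YDE} applies to the limit drift $A$: it provides a unique solution $\theta\in C^\gamma_t$ of the limiting nonlinear YDE starting from $\theta_0=x_0-w_0$.

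Next I would quantify convergence of $\theta^n$ to $\theta$ by the comparison principle. Because the sequence $\Gamma^{w^n}b^n$ converges in $C^\gamma_t C^{1+\eta,\lambda}_x$, it is uniformly bounded there, say by some $M>0$; similarly $\theta^n_0\to\theta_0$ implies boundedness in $\mathbb{R}^d$ by some $R>0$. Two minor points to address: the regularity parameter in which $\theta^n$ a priori lives is $C^H_t$ (from Proposition~\ref{sectemp prop smooth case}) while Theorem~\ref{secyoung comparison theorem} works in $C^\gamma_t$; this is reconciled by noting that uniqueness of the nonlinear YDE (Theorem~\ref{secyoung thm existence and uniquness of YDE}) forces $\theta^n$ to coincide with the $C^\gamma_t$-solution provided by the abstract theory, so we may apply Theorem~\ref{secyoung comparison theorem} directly to the pairs $(\Gamma^{w^n}b^n,\theta^n_0)$ and $(A,\theta_0)$. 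This yields a constant $C=C(\gamma,\eta,T,R,M)$ such that
\begin{equation*}
\Vert \theta^n - \theta\Vert_\gamma
\leq C\bigl(|\theta^n_0-\theta_0| + \Vert \Gamma^{w^n}b^n - A\Vert_{\gamma,1+\eta,\lambda}\bigr)\longrightarrow 0.
\end{equation*}

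Finally, since $\theta^n\to\theta$ in $C^\gamma_t$ (hence uniformly) and $w^n\to w$ uniformly by assumption, we obtain
\begin{equation*}
\sup_{t\in[0,T]} |x^n_t - (w_t+\theta_t)| \leq \Vert \theta^n-\theta\Vert_{\infty} + \Vert w^n-w\Vert_{\infty} \longrightarrow 0,
\end{equation*}
which is the desired uniform convergence. The only subtle step is the consistency check between the pathwise $C^H_t$-framework of the smooth approximations and the abstract $C^\gamma_t$-framework of the limiting nonlinear YDE, but this is handled by uniqueness, so no genuine obstacle arises; the heart of the argument is simply the continuity of the solution map $A\mapsto \mathcal{I}(A)$ provided by the comparison theorem.
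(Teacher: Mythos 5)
Your proposal is correct and follows essentially the same route as the paper: reduce to the nonlinear YDE for $\theta^n=x^n-w^n$ via Proposition~\ref{sectemp prop smooth case}, use the convergence of the data $(\Gamma^{w^n}b^n,\theta^n_0)\to(A,\theta_0)$ in $C^\gamma_t C^{1+\eta,\lambda}_x\times\RR^d$ together with the comparison Theorem~\ref{secyoung comparison theorem} to get $\theta^n\to\theta$ in $C^\gamma_t$, and then add back $w^n\to w$. Your extra remark reconciling the $C^H_t$ and $C^\gamma_t$ solution classes via uniqueness is a detail the paper leaves implicit, but it does not change the argument.
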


\begin{proof}
We know from Proposition \ref{sectemp prop smooth case} that in the smooth case, $\theta^n:=x^n-w^n$ is a solution to the nonlinear YDE associated to $(\Gamma^{w^n} b^n, x^n_0-w^n_0)$, where the multiplicative averaging operator $\Gamma^{w^n} b^n$ is classically defined pointwise and by hypothesis $(\Gamma^{w^n} b^n, x^n_0-w^n_0)\to (A, \theta_0)$ in $C^\gamma_t C^{1+\eta,\lambda}_x\times \RR^d$. It then follows from Theorem~\ref{secyoung comparison theorem} that $\theta^n\to \theta$ in $C^\gamma_t$; since $w^n\to w$, it follows that $x^n=w^n+\theta^n\to w+\theta$.
\end{proof}

We stated the previous result in a general fashion, so that it can be applied even in situations in which after the limit $w$ does not belong to $C^\delta_t$ with $\delta>1-H$. In this case the analytic definition of $\Gamma^w b$ breaks down, even in the distributional sense, regardless the regularity of $b$; therefore we must invoke the stochastic construction of $\Gamma^w b$ from Section~\ref{sec: avg fields w multiplicative noise}, which truly relies on $\beta$ being sampled as an fBm. However, in the regime $H+\delta>1$, if the regularity of $\Gamma^w b$ is known, the approximating sequence can be constructed explicitly and we obtain the following result, which holds for any given continuous path $\beta\in C^H_t$, not necessarily sampled as a stochastic process.

\begin{prop}\label{sectemp prop general case approximation}
Let $b\in\mathcal{D}(\RR^d)$ be such that $\Gamma^w b \in C^\gamma_t C^{1+\eta,\lambda}_x$ for some $\gamma,\eta,\lambda$ satisfying the usual conditions. Then for any $\theta_0\in\RR^d$ there exists a unique solution $\theta\in C^\gamma_t$ to the nonlinear YDE
\begin{equation}\label{eq:general NYE}
\theta_t = \theta_0 +\int_0^t \Gamma^w b(\mathd s,\theta_s).
\end{equation}
Moreover, denoting by $b^\varepsilon$ a sequence of mollifications of $b$ and by $x^\varepsilon$ the solutions associated to
\[ x^\varepsilon_t = \theta_0 +\int_0^t b^\varepsilon (x^\varepsilon_s)\mathd\beta_s +w_t,\]
then setting $\theta^\varepsilon=x^\varepsilon-w$, it holds $\theta^\varepsilon\to \theta$ in $C^\gamma_t$ as $\varepsilon\to 0$.
\end{prop}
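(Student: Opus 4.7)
The strategy splits naturally into two parts: existence/uniqueness of $\theta$, and convergence of the mollified scheme. Existence and uniqueness of $\theta \in C^\gamma_t$ solving \eqref{eq:general NYE} are almost immediate: the hypothesis $\Gamma^w b \in C^\gamma_t C^{1+\eta,\lambda}_x$ is precisely the data required by Theorem \ref{secyoung thm existence and uniquness of YDE}, so applying that theorem with drift $A = \Gamma^w b$ yields a unique global solution (the phrase ``usual conditions'' being understood as $\gamma > 1/2$, $\gamma(1+\eta)>1$, $\eta + \lambda \le 1$). The a priori bound \eqref{secyoung a priori estimate} gives in addition $\llbracket \theta \rrbracket_\gamma \lesssim \exp(C \Vert \Gamma^w b \Vert_{\gamma,1+\eta,\lambda}^2)(1+|\theta_0|)$, which will be useful below.

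For the approximation statement, the plan is to view $\theta^\varepsilon := x^\varepsilon - w$ as the unique solution of the nonlinear YDE driven by $\Gamma^w b^\varepsilon$, and then pass to the limit using the continuity of the solution map. Concretely, since $b^\varepsilon = \rho^\varepsilon \ast b$ is smooth (and via the localization trick of Remark in Theorem \ref{secyoung thm existence and uniquness of YDE} can be reduced to $C^2_b$), Proposition \ref{sectemp prop smooth case} applies to $x^\varepsilon$ and gives the identity
\[
\theta^\varepsilon_t = \theta_0 + \int_0^t \Gamma^w b^\varepsilon(\mathd s, \theta^\varepsilon_s),
\]
where $\Gamma^w b^\varepsilon$ is defined analytically as in Lemma \ref{lem defn distributional averaging}. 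By Lemma \ref{lem approximation averaging}, for any $\gamma' < \gamma$ and $\eta' < \eta$ one has $\Gamma^w b^\varepsilon \to \Gamma^w b$ in $C^{\gamma'}_t C^{1+\eta',\lambda}_x$, while the norms $\Vert \Gamma^w b^\varepsilon \Vert_{\gamma, 1+\eta, \lambda}$ stay bounded uniformly in $\varepsilon$ (by the same lemma). Applying the comparison estimate \eqref{secyoung estimate comparison theorem} to the pair $(\Gamma^w b^\varepsilon, \theta_0)$, $(\Gamma^w b, \theta_0)$ at slightly degraded parameters $(\gamma',\eta',\lambda)$ immediately yields $\theta^\varepsilon \to \theta$ in $C^{\gamma'}_t$. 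The final upgrade to $C^\gamma_t$ is obtained by interpolating this convergence against the uniform bound of $\theta^\varepsilon$ in $C^\gamma_t$ coming from \eqref{secyoung a priori estimate}.

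The main obstacle I foresee is the bookkeeping of parameters: one needs to choose $(\gamma', \eta')$ close enough to $(\gamma, \eta)$ so that the strict inequalities $\gamma' > 1/2$, $\gamma'(1+\eta') > 1$, $\eta' + \lambda \le 1$ are still satisfied, ensuring that Theorems \ref{secyoung thm existence and uniquness of YDE} and \ref{secyoung comparison theorem} apply uniformly along the sequence; this is possible because all conditions in the hypothesis are strict or slack. A secondary (minor) subtlety is the identification of the classical Young integral $\int_0^\cdot b^\varepsilon(x^\varepsilon_s)\mathd \beta_s$ with the nonlinear Young integral $\int_0^\cdot \Gamma^w b^\varepsilon(\mathd s, \theta^\varepsilon_s)$, which is exactly the content of Proposition \ref{sectemp prop smooth case} in the smooth regime $\delta + H > 1$ tacitly in force in this section. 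Once these points are handled, the comparison principle does all the remaining work.
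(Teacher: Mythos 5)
Your argument is essentially the paper's own proof: the paper establishes existence and uniqueness via Theorem \ref{secyoung thm existence and uniquness of YDE} and then invokes Lemma \ref{lem approximation averaging} together with Lemma \ref{secyoung lem approximations}, the latter being exactly the packaging of your two steps (identification of $\theta^\varepsilon$ as the nonlinear YDE solution via Proposition \ref{sectemp prop smooth case}, then the comparison estimate \eqref{secyoung estimate comparison theorem} applied at slightly degraded parameters $\gamma'<\gamma$, $\eta'<\eta$ still satisfying the usual conditions). The only caveat is that your final interpolation step actually yields convergence in $C^{\gamma'}_t$ for every $\gamma'<\gamma$ rather than at the endpoint $\gamma$ itself, but the paper's proof is no sharper on this point, so this does not count as a gap relative to it.
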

\begin{proof}
The first claim follows from Theorem~\ref{secyoung thm existence and uniquness of YDE}. By Lemma~\ref{lem approximation averaging}, $\Gamma^w b^\varepsilon$ are uniformly bounded in $C^\gamma_t C^{1+\eta,\lambda}_x$ and they are converging to $\Gamma^w b$ in $C^{\gamma'}_t C^{1+\eta',\lambda}_x$ for any $\gamma'<\gamma$ and $\eta'<\eta$; we can choose them so that $\gamma'>1/2$, $ \gamma'(1+\eta')>1$, $\eta'+\lambda\geq 1$. The conclusion then follows from Lemma~\ref{secyoung lem approximations}.
\end{proof}

\subsection{Concepts of existence and uniqueness}\label{sec: concepts solution}

Given parameters $\gamma, \eta, \lambda\in (0,1)$, we will assume throughout this section that they satisfy
\begin{equation}\label{eq: hypothesis parameters}
    \gamma>1/2,\quad \gamma(1+\eta)>1,\quad \eta+\lambda\leq 1 \tag{H}
\end{equation}

\begin{defn}\label{defn: solution}
Let $\{\beta_t\}_{t\in [0,T}$ be a fBm of Hurst parameter $H>1/2$ defined on a probability space $(\Omega,\mathcal{F},\mathbb{P})$, $w$ a continuous deterministic path and $b$ a distributional field. We say that a process $x$ is a \textit{pathwise solution} starting at $x_0\in \mathbb{R}^d$ to the SDE
\[
\mathd x_t = b(x_t)\mathd \beta_t + \mathd w_t
\]
if there exist parameters $\gamma, \eta, \lambda$ satisfying \eqref{eq: hypothesis parameters} and a set $\Omega'\subset\Omega$ of full probability such that, for all $\omega\in \Omega'$, the following hold:
\begin{itemize}
    \item[\textit{i.}] $\Gamma^w b$ is well defined in the sense of Theorem~\ref{thm: main result sec3} and $\Gamma^w b(\omega)\in C^\gamma_t C^{\eta,\lambda}_x$.
    \item[\textit{ii.}] $x(\omega)_0=x_0$ and $x(\omega)\in w+C^\gamma_t$.
    \item[\textit{iii.}] $\theta(\omega):=x(\omega)-w$ satisfies the nonlinear YDE
    \begin{equation*}
        \theta_t(\omega)=\theta_0 + \int_0^t \Gamma^w b(\omega)(\mathd s, \theta_s(\omega)).
    \end{equation*}
\end{itemize}
\end{defn}

Let us comment on the above definition. First of all observe that no filtration on the space $(\Omega,\mathcal{F},\mathcal{P})$ is considered and no adaptability is required on the process $x$. Secondly, the equation satisfied by $\theta_t (\omega)$ is analytically meaningful, once $\Gamma^w b(\omega)$ has the prescribed regularity. In this sense, it is a \textit{random solution} to a \textit{random YDE} rather than a solution to an SDE; in other terms, differently from classical SDEs driven by Brownian motion, all integrals appearing are pathwise defined, which is why we chose the terminology of \textit{pathwise solution}.

Our definition is is some sense closer in spirit to the concept of superposition solution considered in~\cite{flandoli2009remarks} (which is itself inspired by the one from~\cite{ambrosio2004transport}) than to classical concepts of solutions for SDEs. Another way to see it is to define, for $\gamma,\eta,\lambda$ as in Definition~\ref{defn: solution} and for any $A\in C^\gamma_t C^{\eta,\lambda}_x$, $\theta_0\in\mathbb{R}^d$ the set
\begin{equation}\label{eq: solution set}
    C(\theta_0,A):=\bigg\{ \theta\in C^\gamma_t : \theta_t = \theta_0 + \int_0^t A(\mathd s,\theta_s) \  \forall\, t\in [0,T]  \bigg\}.
\end{equation}
Then conditions~\textit{i.} and~\textit{iii.} from Definition~\ref{defn: solution} may be written as
\begin{equation*}
    \mathbb{P}\Big(\omega\in \Omega:\, \Gamma^w b(\omega)\in C^\gamma_t C^{\eta,\lambda}_x,\ \theta(\omega)\in C(\theta_0,\Gamma^w b(\omega))\Big) = 1
\end{equation*}
which can be interpreted as the fact that $\theta$, as a random variable on $C^\gamma_t$, is concentrated on the random set $\omega\mapsto C(\theta_0,\Gamma^w b(\omega))$; we will soon rigorously show that this defines a random set, but let us proceed in the discussion for the moment. As a consequence, if $C(\theta_0,\Gamma^w b(\omega))$ is a singleton for $\mathbb{P}$-a.e. $\omega$, then $\theta$ is uniquely determined. This motivates the following definition.

\begin{defn}\label{defn: path-by-path wellposedness}
Let $\beta$, $w$, $b$ and the parameters $\gamma,\eta,\lambda$ be as in Definition~\ref{defn: solution}. We say that \textit{path-by-path wellposedness} holds for the SDE if
\begin{equation}\label{eq: path-by-path wellposedness}
    \mathbb{P}\Big(\omega\in\Omega: \Gamma^w b(\omega)\in C^\gamma_t C^{\eta,\lambda}_x,\ Card(C(\theta_0,\Gamma^w b(\omega)))=1 \text{ for all } \theta_0\in\mathbb{R}^d\Big)=1.
\end{equation}
\end{defn}

We adopt this terminology, instead of the more classical \textit{path-by-path uniqueness}, to stress the fact that the ``good set'' of full probability on which uniqueness holds is the same for all $\theta_0\in\mathbb{R}^d$, differently from the original result by Davie from~\cite{davie2007uniqueness}. 

\begin{rem}\label{rem: path-by-path law requirement}
By the construction from Theorem~\ref{thm: main result sec3}, the random field $\Gamma^w b$ is adapted to the filtration generated by $\beta$, $\Gamma^w b=\Gamma^w b(\beta)$; therefore~\eqref{eq: path-by-path wellposedness} is exclusively a requirement on the law of $\beta$ and does not depend on the specific probability space $(\Omega,\mathcal{F},\mathbb{P})$ in consideration.
\end{rem}

As a consequence of the theory outlined in Section~\ref{sec:Non-linear Young integration and equations}, we immediately deduce the following.

\begin{lem}\label{lem: conditions path-by-path wellposedness}
Let $\beta$, $w$, $b$ and the parameters $\gamma,\eta,\lambda$ be as in Definition~\ref{defn: solution} and suppose that
\[ \mathbb{P}\big(\omega\in\Omega: \Gamma^w b(\omega)\in C^\gamma_t C^{1+\eta,\lambda}_x\big)=1.
\]
Then path-by-path wellposedness holds for the SDE.
\end{lem}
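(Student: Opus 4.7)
The plan is to reduce the statement to a pathwise application of the deterministic wellposedness result, Theorem~\ref{secyoung thm existence and uniquness of YDE}. By hypothesis there exists a set $\Omega' \subset \Omega$ of full probability on which $\Gamma^w b(\omega) \in C^\gamma_t C^{1+\eta,\lambda}_x$. First I would verify the trivial inclusions $C^\gamma_t C^{1+\eta,\lambda}_x \subset C^\gamma_t C^{\eta,\lambda}_x$ and $C^\gamma_t C^{1+\eta,\lambda}_x \subset C^\gamma_t C^{1+\eta}_{x,loc}$, so that the drift $A := \Gamma^w b(\omega)$ meets exactly the hypotheses under which Theorem~\ref{secyoung thm existence and uniquness of YDE} provides existence and uniqueness of the nonlinear YDE for every initial datum, as well as the associated $C^\gamma_t C^1_{x,loc}$ flow.

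Next, fixing $\omega \in \Omega'$ and noting that the parameters $\gamma,\eta,\lambda$ already satisfy~\eqref{eq: hypothesis parameters}, I would apply Theorem~\ref{secyoung thm existence and uniquness of YDE} to obtain, for every $\theta_0 \in \RR^d$, a unique solution $\theta \in C^\gamma_t$ of
\[
\theta_t = \theta_0 + \int_0^t \Gamma^w b(\omega)(\mathd s, \theta_s), \qquad t \in [0,T].
\]
Rephrased in the language of~\eqref{eq: solution set}, this says that $\mathrm{Card}(C(\theta_0, \Gamma^w b(\omega))) = 1$ for every $\theta_0 \in \RR^d$ on the same $\omega$-event $\Omega'$, which is precisely condition~\eqref{eq: path-by-path wellposedness}.

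There is no genuine technical obstacle beyond correctly unpacking the definitions; the point I would emphasize is that Theorem~\ref{secyoung thm existence and uniquness of YDE} furnishes uniqueness \emph{uniformly in} $\theta_0$ once a single sufficiently regular drift $A$ is fixed, so the good event $\Omega'$ on which pathwise wellposedness holds is exactly the deterministic regularity event $\{\Gamma^w b \in C^\gamma_t C^{1+\eta,\lambda}_x\}$ and does not depend on the starting point. This uniformity in $\theta_0$ is exactly what distinguishes the present notion from the classical Davie-style path-by-path uniqueness, and it justifies the stronger terminology of \emph{path-by-path wellposedness} adopted in Definition~\ref{defn: path-by-path wellposedness}; the measurability of the event in~\eqref{eq: path-by-path wellposedness} is then automatic since it contains the measurable full-probability event $\Omega'$ and $\mathcal{F}$ is $\PP$-complete.
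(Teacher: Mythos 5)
Your proposal is correct and follows essentially the same route as the paper, which deduces the lemma immediately from Theorem~\ref{secyoung thm existence and uniquness of YDE}: on the full-probability event where $\Gamma^w b(\omega)\in C^\gamma_t C^{1+\eta,\lambda}_x$, that theorem (with the parameters already satisfying~\eqref{eq: hypothesis parameters}) yields existence and uniqueness of the nonlinear YDE for every $\theta_0$, i.e.\ $\mathrm{Card}\,(C(\theta_0,\Gamma^w b(\omega)))=1$ uniformly in $\theta_0$, which is precisely~\eqref{eq: path-by-path wellposedness}. The only point worth flagging is that your ``trivial inclusions'' $C^{1+\eta,\lambda}_x\subset C^{\eta,\lambda}_x\cap C^{1+\eta}_{x,loc}$ rest on the paper's (implicit) convention for the norm of $C^{n+\eta,\lambda}_x$, under which they indeed hold and are used in the same way in the proofs of Theorems~\ref{secyoung comparison theorem} and~\ref{main thm1}.
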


The rest of the section is dedicated to the proof that $\omega\mapsto C(\theta_0,\Gamma^w b(\omega))$ is a random set, as well as some of its properties. Thus, we believe that it contains results of independent interest regarding nonlinear YDEs.

Before proceeding further, we need to recall a few things on random sets; for a more detailed exposition we refer to~\cite{castaing2006convex}. Given a complete vector space $(E,d)$, the distance between $a\in E$ and a compact $K\subset E$ is given by
\begin{equation*}
    d(a,K)=\inf_{b\in K} d(a,b) = \min_{b\in K} d(a,b)
\end{equation*}
where the infimum is realised since $K$ is compact. Given $K_1$, $K_2$ compact subsets of $E$, their Hausdorff distance $d_H$ is defined as
\begin{equation*}
    d_H(K_1,K_2) = \max\Big\{ \sup_{a\in K_1} d(a,K_2),\, \sup_{b\in K_2} d(b,K_1) \Big\}.
\end{equation*}
Setting $\mathcal{K}(E)=\{K\subset E : K \text{ compact} \}$, $(\mathcal{K}(E),d_H)$ is a complete metric space and moreover we have the identity
\begin{equation*}
    d_H (K_1,K_2) = \sup_{a\in E} |d(a,K_1)-d(a,K_2)| = \max_{a\in K_1\cup K_2} |d(a,K_1)-d(a,K_2)|.
\end{equation*}
Consider $(\mathcal{K}(E),d_H)$ endowed with its Borel $\sigma$-algebra, and let  $(F,\mathcal{A})$ be another measurable space; then it can be shown that a map $X:(F,\mathcal{A})\to (\mathcal{K}(E),d_H)$ is measurable if and only if the map $d(a, X(\cdot))$ is measurable from $(F,\mathcal{A})$ to $(\mathbb{R},\mathcal{B}(\mathbb{R}))$, for all $a\in E$.
Given a probability space $(\Omega,\mathcal{F},\mathbb{P})$, a \textit{random compact set} is a measurable map $X:(\Omega,\mathcal{F},\mathbb{P})\to (\mathcal{K}(E),d_H)$.

\begin{prop}\label{prop: measurable set}
Let $\gamma, \eta, \lambda$ be parameters satisfying~\eqref{eq: hypothesis parameters}. Then for any $\theta_0\in\mathbb{R}^d$ and $A\in C^\gamma_t C^{\eta,\lambda}_x$, the set $C(\theta_0,A)$ is a non-empty, compact subset of $C^\gamma_t$. Moreover the map
\[
(\theta_0,A)\mapsto C(\theta_0,A)
\]
is measurable from $\mathbb{R}^d\times C^\gamma_t C^{\eta,\lambda}_x$ to $\mathcal{K} (C^\gamma_t)$.
\end{prop}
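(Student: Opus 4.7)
The plan is to establish all three properties in parallel, since they rely on the same core ingredients: the a priori estimate \eqref{secyoung a priori estimate}, Ascoli--Arzel\`a compactness, and the continuity of nonlinear Young integration (item iv of Theorem~\ref{secyoung thm existence young integral}).

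For non-emptiness and compactness, I would first invoke the existence part of Theorem~\ref{secyoung thm existence and uniquness of YDE} to guarantee $C(\theta_0,A)\neq \emptyset$. The a priori bound \eqref{secyoung a priori estimate} next yields $\|\theta\|_\gamma \leq M = M(\theta_0,A)$ for every $\theta \in C(\theta_0,A)$, with $M$ depending continuously on the data; this gives boundedness in $C^\gamma_t$ and in particular equicontinuity, so by Ascoli--Arzel\`a the set is precompact in $C^0_t$. Interpolating uniform $C^0_t$-convergence with the uniform $C^\gamma_t$-bound upgrades this to precompactness in $C^{\gamma'}_t$ for every $\gamma'<\gamma$. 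Closedness under $C^{\gamma'}_t$-convergence then follows from item iv of Theorem~\ref{secyoung thm existence young integral}: under \eqref{eq: hypothesis parameters} one may choose $\gamma'<\gamma$ with $\gamma+\eta\gamma'>1$ (exactly where $\gamma(1+\eta)>1$ is used), and the nonlinear Young integral passes to the limit along such sequences.

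For measurability, I plan to show that the multi-map $\Phi:(\theta_0,A)\mapsto C(\theta_0,A)$ is upper semi-continuous into $\mathcal{K}(C^{\gamma'}_t)$, which is a standard sufficient condition for Borel measurability of compact-valued multi-maps. Concretely, if $F\subset C^{\gamma'}_t$ is closed one needs
\[
\Phi^{-1}(F) = \{(\theta_0,A) : C(\theta_0,A)\cap F \neq \emptyset\}
\]
to be closed: given $(\theta_0^n,A^n)\to(\theta_0,A)$ and $\theta^n \in C(\theta_0^n,A^n)\cap F$, the continuous dependence of $M$ on its arguments gives uniform control on $\|\theta^n\|_\gamma$, Ascoli--Arzel\`a extracts $\theta^{n_k}\to \theta$ in $C^{\gamma'}_t$ (with $\theta \in F$ by closedness), and passing to the limit in $\theta^{n_k}_t = \theta_0^{n_k}+\int_0^t A^{n_k}(\mathrm{d}s,\theta^{n_k}_s)$ using joint continuity of the nonlinear Young integral in both arguments (item iv of Theorem~\ref{secyoung thm existence young integral}) yields $\theta \in C(\theta_0,A)\cap F$.

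I expect the main subtlety to be the interpretation of ``compact subset of $C^\gamma_t$'': genuine norm-compactness in $C^\gamma_t$ fails for bounded sets in infinite dimension, so the argument realises the compactness inside the weaker space $C^{\gamma'}_t$ for $\gamma'<\gamma$ chosen close enough to $\gamma$ that $\gamma+\eta\gamma'>1$. The hypothesis $\gamma(1+\eta)>1$ in \eqref{eq: hypothesis parameters} is exactly what makes such a choice possible; modulo this identification of topologies, both compactness and measurability reduce to routine applications of the a priori bound, Ascoli--Arzel\`a, and the continuity properties already recorded in Section~\ref{sec:Non-linear Young integration and equations}.
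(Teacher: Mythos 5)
Your overall architecture (existence via Theorem~\ref{secyoung thm existence and uniquness of YDE}, the a priori bound \eqref{secyoung a priori estimate}, Ascoli--Arzel\`a, continuity of the nonlinear Young integral, and a semicontinuity argument for measurability) matches the paper's, but there is a genuine gap: you prove compactness only in $C^{\gamma'}_t$ for $\gamma'<\gamma$ and measurability only into $\mathcal{K}(C^{\gamma'}_t)$, and you justify this weakening by asserting that ``genuine norm-compactness in $C^\gamma_t$ fails.'' That assertion is wrong for this particular set, and the proposition as stated (compactness in $C^\gamma_t$, measurability into $\mathcal{K}(C^\gamma_t)$) is what the paper proves and later uses. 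The missing idea is that $C(\theta_0,A)$ is not an arbitrary bounded set: every element satisfies the fixed-point identity $\theta^n_\cdot=\theta_0+\int_0^\cdot A(\mathrm{d}s,\theta^n_s)$, and point~\emph{iv.}\ of Theorem~\ref{secyoung thm existence young integral} gives continuity of $(A,\theta)\mapsto\int_0^\cdot A(\mathrm{d}s,\theta_s)$ \emph{with values in} $C^\gamma_t$, even when $\theta$ only converges in $C^{\gamma-\varepsilon}_t$ (with $\varepsilon$ chosen so that $\gamma+\eta(\gamma-\varepsilon)>1$, which \eqref{eq: hypothesis parameters} permits). Hence, after extracting a $C^{\gamma-\varepsilon}_t$-convergent subsequence by Ascoli--Arzel\`a, the identity bootstraps the convergence: $\theta^n=\theta_0+\int_0^\cdot A(\mathrm{d}s,\theta^n_s)$ converges in the full $C^\gamma_t$ norm, and the limit lies in $C(\theta_0,A)$. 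This gives sequential compactness in $C^\gamma_t$ itself, which is exactly the paper's argument.

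The same upgrade repairs your measurability step: along $(\theta_0^n,A^n)\to(\theta_0,A)$ with $\theta^n\in C(\theta_0^n,A^n)$, the joint continuity in both arguments of the integral map yields a subsequence converging in $C^\gamma_t$ (not merely $C^{\gamma'}_t$) to some $\theta\in C(\theta_0,A)$, so the semicontinuity you need can be formulated directly with respect to the $C^\gamma_t$ Hausdorff metric. The paper packages this as lower semicontinuity of $(\theta_0,A)\mapsto d(y,C(\theta_0,A))$ for each fixed $y\in C^\gamma_t$, invoking the recalled equivalence between measurability of a compact-set-valued map and measurability of these distance functions; your route via closedness of $\{(\theta_0,A):C(\theta_0,A)\cap F\neq\emptyset\}$ for closed $F$ is an essentially equivalent formulation, but as written it is carried out in the wrong topology. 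In short: keep your structure, but replace the ``compactness must live in $C^{\gamma'}_t$'' step by the fixed-point upgrade to $C^\gamma_t$; without it you have proved a strictly weaker statement than the one claimed.
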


\begin{proof}
The fact that $C(\theta_0,A)$ is non-empty follows from Theorem~3.1 from~\cite{hu2017nonlinear}. By the a priori estimate~\eqref{secyoung a priori estimate}, $C(\theta_0,A)$ is bounded in $C^\gamma_t$; therefore given a sequence $\{\theta^n\} \subset C(\theta_0,A)$, by Ascoli--Arzel\`a we can extract a subsequence (not relabelled for simplicity) such that $\theta^n\to \theta$ in $C^{\gamma-\varepsilon}_t$ for any $\varepsilon>0$. Choosing $\varepsilon$ sufficiently small such that $\gamma+\eta(\gamma-\varepsilon)>1$, it follows from the continuity of Young integrals that
\[
\theta^n_\cdot = \theta_0+\int_0^\cdot A(\mathd s,\theta^n_s) \to \theta_0 + \int_0^\cdot A(\mathd s,\theta_s)=\theta_\cdot \quad \text{in } C^\gamma_t.
\]
Namely, $\theta^n$ converge in $C^\gamma_t$ to an element of $C(\theta_0,A)$, which shows compactness.\\
In order to prove the second claim, it is enough to show that for any $y\in C^\gamma_t$, the map
\[
\mathbb{R}^d\times C^\gamma_t C^{\eta,\lambda}_x \ni (\theta_0,A)\mapsto d(y, C(\theta_0,A))\in \mathbb{R}
\]
is measurable; we will actually show that it is lower semicontinuous.
Fix $y\in C^\gamma_t$ and let $(\theta^n_0,A^n)\to (\theta_0,A)$; by compactness of $C(\theta^n_0,A^n)$, for each $n$ there exists $\theta^n\in C(\theta^n,A^n)$ such that $d(y,\theta^n)=d(y,C(\theta^n,A^n))$.
Up to extracting a subsequence which realizes the liminf, we can assume without loss of generality that $\lim d(y,C(\theta^n,A^n))$ exists; as the sequence $(\theta^n_0,A^n)$ is convergent, it must also be bounded, which implies by~\eqref{secyoung a priori estimate} that $\{\theta^n\}_n$ is bounded in $C^\gamma_t$.
Invoking Ascoli--Arzel\`a and reasoning as in the previous point, using the continuity of nonlinear Young integrals, we can find a (not relabelled) subsequence such that $\theta^n\to\theta\in C(\theta_0,A)$ in $C^\gamma_t$. As a consequence
\[
d(y,C(\theta_0,A)) \leq d(y,\theta) = \lim_{n\to\infty} d(y,\theta_n) = \liminf_{n\to\infty} d(y,C(\theta^n_0,A^n))
\]
which implies lower semicontinuity, and thus concludes the proof.
\end{proof}

The fact that $C(\theta_0,\Gamma^w b(\omega))$ is a random set follows from the following more general result.

\begin{cor}
Let $(\Omega,\mathcal{F},\mathbb{P})$ be a probability space on which  a random field $A=A(\omega)\in C^\gamma_t C^{\eta,\lambda}_x$ and a random vector $\xi=\xi(\omega)\in\mathbb{R}^d$ is defined. Then the map
\begin{equation*}
    \omega\mapsto C(\xi(\omega), A(\omega))
\end{equation*}
defines a random compact subset of $\mathcal{C}^\gamma_t$.
\end{cor}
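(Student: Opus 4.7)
The plan is to deduce the corollary immediately from Proposition~\ref{prop: measurable set} via composition of measurable maps. Compactness of each fibre $C(\xi(\omega), A(\omega))$ has already been established in that proposition, so the sole content is to verify the measurability of $\omega\mapsto C(\xi(\omega), A(\omega))$ as a map into $(\mathcal{K}(C^\gamma_t), d_H)$.

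First, I would appeal to the characterization recalled in the excerpt: a map $X\colon (\Omega,\mathcal{F})\to(\mathcal{K}(E), d_H)$ is measurable if and only if $\omega\mapsto d(y, X(\omega))$ is Borel measurable for every $y\in E$. Applied with $E=C^\gamma_t$, it suffices to prove the scalar measurability of $\omega\mapsto d(y, C(\xi(\omega), A(\omega)))$ for each fixed $y\in C^\gamma_t$.

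The product map $\omega\mapsto (\xi(\omega), A(\omega))$ is measurable from $(\Omega,\mathcal{F})$ into the product Borel space $\mathbb{R}^d\times C^\gamma_t C^{\eta,\lambda}_x$, since each coordinate is measurable by assumption. By Proposition~\ref{prop: measurable set}, the map $(\theta_0,A)\mapsto C(\theta_0,A)$ is measurable from $\mathbb{R}^d\times C^\gamma_t C^{\eta,\lambda}_x$ into $(\mathcal{K}(C^\gamma_t), d_H)$; equivalently, $(\theta_0,A)\mapsto d(y, C(\theta_0,A))$ is (lower semicontinuous and hence) Borel measurable for each $y$. Precomposing with $\omega\mapsto(\xi(\omega),A(\omega))$ yields the required measurability, and the corollary follows.

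There is no genuine obstacle here beyond routine bookkeeping: all the analytic work, namely compactness of $C(\theta_0,A)$ and the lower semicontinuous dependence on the data $(\theta_0,A)$, has been absorbed into Proposition~\ref{prop: measurable set}, so the present statement reduces to a composition-of-measurable-maps argument.
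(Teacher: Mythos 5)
Your argument is correct and follows essentially the same route as the paper: the paper also derives the corollary directly from Proposition~\ref{prop: measurable set} together with the fact that a composition of measurable maps is measurable, with the compactness of the fibres already contained in that proposition. Your additional spelling-out of the $d(y,\cdot)$ characterization and the measurability of $\omega\mapsto(\xi(\omega),A(\omega))$ is just a more detailed rendering of the same one-line argument.
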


\begin{proof}
It is an immediate consequence of Proposition~\ref{prop: measurable set} and the fact that composition of measurable functions is measurable.
\end{proof}

\subsection{Proofs of the main results}\label{sec: proofs main results}

The goal is to find specific conditions on the parameters $H$, $\delta$ and the regularity of $b$ in order to obtain existence and uniqueness of  \eqref{sectemp YDE}.  To this end, we will distinguish our analysis into to different cases: when $b\in C^\alpha_x$ with $\alpha\in (0,1)$, we will find conditions for $\delta,H$ and $\alpha$ through application of Proposition \ref{sectemp prop uniqueness mildly regular case} to ensure existence of \eqref{sectemp YDE}. For the general case of $b\in \cD(\RR^d)$, we will consult Proposition \ref{sectemp prop general case approximation} to find conditions for $\delta,H,$ and $\alpha$ such that existence and uniqueness holds.\\

We are now ready to give the proofs of our main results.

\begin{proof}[Proof of Theorem~\ref{main thm1}]
It follows from Corollary \ref{cor: regualrity of multiplcative field} and Remark \ref{rem: sufficent regularity requirement} that, under the regularity assumption $T^w b\in C^\gamma_t C^2_x$, the multiplicative averaged field $\Gamma^w b$ is a well defined random field and we can find $\gamma', \eta,\lambda\in (0,1)$ such that
\[
\Gamma^w b \in C^{\gamma'}_t C^{1+\eta,\lambda}_x \qquad\mathbb{P}\text{-a.s.}
\]
together with $\gamma'>1/2$, $\gamma'(1+\eta)>1$ and $\eta+\lambda\leq 1$. Therefore path-by-path wellposedness follows from Lemma~\ref{lem: conditions path-by-path wellposedness}. Given two pathwise solutions $x^i=\theta^i+w$ starting at $x_0$, setting $\theta_0=x_0-w_0$, it holds
\begin{align*}
    \PP \big(x^1=x^2 \text{ in } C^0_t \big)
    & = \PP \big( \theta^1=\theta^2 \text{ in } C^0_t \big)
    \geq \PP \big( \theta^i\in C^{\gamma'}_t, \Gamma^w b\in C^{\gamma'}_t C^{\eta,\lambda}_x, \theta^i\in C(\theta_0,\Gamma^w b) \big)\\
    & \geq \PP(\Gamma^w b\in C^{\gamma'}_t C^{\eta,\lambda}_x, C(\theta_0,\Gamma^w b)\text{ is a singleton}\big)=1
\end{align*}
which shows indistinguishability.
Adaptedness follows from the formula $\theta(\omega)_\cdot=\mathcal{I}(\Gamma^w b(\omega))(\cdot,\theta_0)$ and the fact that by construction the field $\Gamma^w b$ is adapted to $\beta$, in the sense that $\{\Gamma^w_s b,s\in [0,t]\}\subset \sigma\{\beta_s:s\in [0,t]\}$.
Finally, formula \eqref{eq: form solution} follows from the one for $\theta$ and the change of variables $x=\theta+w$.
\end{proof}

\begin{proof}[Proof of Proposition~\ref{prop: justification solutions}]
Part~\textit{i.} is just a consequence of Proposition~\ref{sectemp prop smooth case}; in particular it is enough to require $b\in C^2_b$, $w\in C^\delta_t$ with $\delta+H>1$.\\
Under condition~\eqref{main thm1 condition}, by Remark~\ref{rem: approximation mult avg} we can find a sequence $(b^n,w^n)$ (for instance in $C^2_b\times C^\delta_t$) such that $b^n\to b$ in the sense of distributions, $w^n\to w$ uniformly and $\Gamma^{w^n} b^n(\omega)\to \Gamma^w b(\omega)$ in $C^{\gamma'}_t C^{1+\eta,\lambda}_x$ for $\mathbb{P}$-a.e. $\omega$;
moreover we can choose the parameters so that $\gamma'>1/2$, $\gamma'(1+\eta)>1$ and $\eta+\lambda\leq 1$.
Therefore point~\textit{ii.} follows from an application of Lemma~\ref{secyoung lem approximations}.\\
Suppose now $(b^n,w^n)$ is a sequence in $C^2_b\times C^\delta_t$ satisfying the assumptions of point~\textit{iii.};
by properties of classical averaged fields, $T^{w^n} b^n\to T^w b$ in the sense of distributions, which implies that $T^w b\in C^\gamma_t C^2_x$ and $T^{w^n} b^n\to T^w b$ in $C^\gamma_t C^2_x$.
But then by Theorem~\ref{thm: main result sec3} and Remark~\ref{rem: sufficent regularity requirement}, we can find $\gamma',\eta,\lambda$ as above such that $\Gamma^{w^n} b^n\to \Gamma^w b$ in $L^p(\Omega; C^{\gamma'}_t C^{\eta,\lambda}_x)$. The conclusion then follows again from an application of Lemma~\ref{secyoung lem approximations}.
\end{proof}

In order to specialize the above criterion to cases of practical interest, we need the following lemma.

\begin{lem}\label{lem: interpolation averaging}
Let $b\in C^\alpha_x$ for some $\alpha\in \mathbb{R}$, $w$ a continuous path s.t. $T^w b\in C^{1/2}_t C^{\alpha+\nu}_x$. Then
\[
T^w b\in C^\gamma_t C_x^{\alpha+2\nu(1-\gamma)} \qquad \forall\, \gamma\in [1/2,1].
\]
\end{lem}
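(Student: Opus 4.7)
The plan is to combine two natural a priori bounds on $T^w b$ via standard Besov interpolation. The first bound is essentially free: since translations act isometrically on $C^\alpha_x$ and $\partial_t T^w b(t,\cdot)=\tau^{w_t}b$, we have
\[
\|T^w_{s,t}b\|_\alpha \leq \int_s^t \|\tau^{w_r}b\|_\alpha\,\mathd r = |t-s|\,\|b\|_\alpha,
\]
so $T^w b\in C^1_t C^\alpha_x$ with $\|T^w b\|_{1,\alpha}\lesssim \|b\|_\alpha$. The second bound is the hypothesis $\|T^w_{s,t}b\|_{\alpha+\nu}\leq \|T^w b\|_{1/2,\alpha+\nu}\,|t-s|^{1/2}$.

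Next I would invoke the interpolation inequality in the Besov scale $C^\beta_x=B^\beta_{\infty,\infty}$: for any $\theta\in[0,1]$ and any $f$,
\[
\|f\|_{(1-\theta)\alpha+\theta(\alpha+\nu)} \leq \|f\|_\alpha^{1-\theta}\,\|f\|_{\alpha+\nu}^{\theta}.
\]
This is immediate from the Littlewood--Paley characterization $\|f\|_{\beta}=\sup_n 2^{n\beta}\|\Delta_n f\|_\infty$ by taking a weighted geometric mean of the two bounds $2^{n\alpha}\|\Delta_n f\|_\infty\leq\|f\|_\alpha$ and $2^{n(\alpha+\nu)}\|\Delta_n f\|_\infty\leq\|f\|_{\alpha+\nu}$ and then taking the supremum in $n$.

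Applying this with $f=T^w_{s,t}b$ yields
\[
\|T^w_{s,t}b\|_{\alpha+\theta\nu}
\leq \|T^w_{s,t}b\|_\alpha^{1-\theta}\,\|T^w_{s,t}b\|_{\alpha+\nu}^\theta
\lesssim |t-s|^{1-\theta}\,|t-s|^{\theta/2}
= |t-s|^{1-\theta/2}.
\]
Setting $\gamma:=1-\theta/2$, so that $\theta=2(1-\gamma)$, the condition $\theta\in[0,1]$ corresponds exactly to $\gamma\in[1/2,1]$, and the spatial regularity is $\alpha+\theta\nu=\alpha+2\nu(1-\gamma)$, giving $T^w b\in C^\gamma_t C^{\alpha+2\nu(1-\gamma)}_x$ as claimed. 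The only thing to watch is that the interpolation inequality should indeed be applied to the increment $T^w_{s,t}b$ (which lies in $C^{\alpha+\nu}_x\subset C^\alpha_x$), but this is straightforward. No single step is really an obstacle; the whole content of the lemma is this one-line interpolation between the trivial Lipschitz-in-time bound at regularity $\alpha$ and the hypothesis at regularity $\alpha+\nu$.
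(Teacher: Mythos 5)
Your proposal is correct and follows essentially the same route as the paper: interpolating, on each increment $T^w_{s,t}b$, the trivial bound $\|T^w_{s,t}b\|_\alpha\leq |t-s|\,\|b\|_\alpha$ against the hypothesis $\|T^w_{s,t}b\|_{\alpha+\nu}\lesssim |t-s|^{1/2}$ via the Besov interpolation inequality, then reparametrizing $\theta$ in terms of $\gamma$. The only cosmetic difference is that you prove the interpolation inequality directly from the Littlewood--Paley characterization, whereas the paper cites it from the literature.
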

\begin{proof}
Since $b\in C^\alpha_x$, $T^w b\in C^1_t C^\alpha_x$; the claim then follows from interpolation estimates. Indeed, by Besov interpolation inequality (see \cite[Thm. 2.80]{BahCheDan}), for any $\theta\in [0,1]$ it holds
\[
\Vert T^w_{s,t} b\Vert_{\alpha+(1-\theta)\nu}
\lesssim \Vert T^w_{s,t} b\Vert_{\alpha}^{\theta}\, \Vert T^w_{s,t}b\Vert_{\alpha+\nu}^{1-\theta}
\lesssim |t-s|^{\theta+(1-\theta)/2} \,\Vert T^w_{s,t} b\Vert_{1,\alpha}^{\theta}\,\Vert T^w_{s,t} b\Vert_{1/2,\alpha+\nu}^{1-\theta}
\]
and the conclusion follows by taking $\gamma=(1+\theta)/2$.
\end{proof}

\begin{proof}[Proof of Theorem~\ref{main thm2}]
To show the first statement, we need to verify that under condition~\ref{main thm2 condition}, $T^w b\in C^\gamma_t C^2_x$ for some $\gamma>3/2-H$; by the assumption $T^w b\in C^{1/2}_t C^{\alpha+\nu}_x$ and Lemma~\ref{lem: interpolation averaging}, it is enough to verify that
\begin{equation*}
    \begin{cases}
    \gamma>3/2-H\\
    \alpha+2\nu (1-\gamma)>2
    \end{cases}.
\end{equation*}
It is easy to check that one can find $\gamma\in (0,1)$ satisfying the above conditions if and only if~\eqref{main thm2 condition} holds.
Similar computations show that, under~\eqref{main thm2 condition2}, $T^w b\in C^\gamma_t C^{n+1}_x$, which implies that we can find $\gamma',\eta,\lambda$ satisfying the usual conditions such that $\Gamma^w b\in C^{\gamma'}_t C^{n+\eta,\lambda}_x$; the regularity of the flow then follows from the last part of Theorem~\ref{secyoung thm existence and uniquness of YDE}.
\end{proof}

\begin{proof}[Proof of Theorem~\ref{main thm4}]
The proof follows the same lines of the previous ones, only this time we want to check that the conditions of Proposition~\ref{sectemp prop uniqueness mildly regular case} are met. By the assumptions and Lemma~\ref{lem: interpolation averaging}, $T^w b\in C^\gamma_t C^{\alpha+2\nu(1-\gamma)}_x$ for any $\gamma>1/2$; taking $\gamma>3/2-H$ and applying Corollary~\ref{cor: regualrity of multiplcative field}, we deduce that $\mathbb{P}$-a.s. $\Gamma^w b\in C^{\gamma'}_t C^{1+\eta,\lambda}_x$ for any $\gamma'<\gamma+H-1$, $1+\eta<\alpha + 2\nu (1-\gamma)$ and $\lambda$ sufficiently small. In order to find $\gamma',\eta$ such that $\gamma'+H\eta>1$ it is therefore enough to verify that there exists $\gamma>1/2$ such that
\begin{equation*}
    \begin{cases}
    \gamma>3/2-H\\
    \frac{1}{2} + H (\alpha+2\nu (1-\gamma)-1)>1
    \end{cases}
\end{equation*}
or equivalently
\begin{equation*}
    \begin{cases}
    \gamma>3/2-H\\
    \alpha+2\nu (1-\gamma)>1+\frac{1}{2H}
    \end{cases}
\end{equation*}
Taking $\gamma$ of the form $\gamma=3/2-H+\varepsilon$ with $\varepsilon>0$ sufficiently small, it is easy to check that the above conditions are satisfied under assumption~\eqref{main thm4 condition}.
\end{proof}

\section{Further extensions}\label{sec: further extensions}

\subsection{Time inhomogeneous diffusion coefficient}\label{sec: inhomogeneous coefficients}

So far we assumed the diffusion coefficient $b$ to be homogeneous, in the sense that $b(t,x)=b(x)$. However, our method can be easily extended to the general case of time in-homogeneous $b$. We will outline here the necessary conditions in order to obtain wellposedness of equations with time homogeneous coefficients   of the form
\begin{equation*}
    \dd x_t = b(t,x_t)\dd \beta_t +\dd w_t. 
\end{equation*}

The first step in this direction is to define the multiplicative averaged field $\Gamma^w b$. To this end, it is readily seen that if $(t,x)\mapsto b(t,x)$ is smooth in both variables and $w\in C^\delta_t$ with not too small $\delta$, the analytical definition of $\Gamma^w b$ from Lemma \ref{lem defn distributional averaging} still holds. In fact, if $b\in C_t^\rho C^{\alpha+\eta}_x$ with $\rho>1-H$, $\alpha\in \RR$ and $\eta\in (0,1]$, under the assumption $H+\eta\delta>1$, there exists a unique distribution $\Gamma^wb\in C^H_tC^\alpha_x$ such that 
\begin{equation}\label{eq:young inhomogenous}
    \|\Gamma_{s,t}^wb-b(s,\cdot+w_s)\beta_{s,t}\|_{C^\alpha_x} \lesssim |t-s|^{H+\eta\delta}. 
\end{equation}
Indeed, setting $\Xi_{s,t}=\tau^{w_s}b(s,\cdot)\beta_{s,t}$, we observe that 
\begin{equation*}
    \|\delta \Xi_{s,u,t}\|_{C^\alpha_x} \lesssim \left[\|b(s,\cdot+w_u)-b(u,\cdot+w_u)\|_{C_x^\alpha}+\|b(s,\cdot+w_u)-b(s,\cdot+w_s)\|_{C^\alpha_x}\right]|\beta_{u,t}|. 
\end{equation*}
Invoking the assumptions of H\"older regularity in $t\mapsto b(t,\cdot)$, $w$, and $\beta$, we obtain
\begin{equation*}
    \|\delta \Xi_{s,u,t}\|_{C^\alpha_x} \lesssim \|b\|_{C^\rho_t C^{\alpha+\eta}_x} \llbracket \beta\rrbracket_{C^H_t} (1+\llbracket w\rrbracket_{C^\delta_t})
    |t-s|^{H+\eta\delta\wedge \rho},
\end{equation*}
where we have employed estimates similar to those of Lemma \ref{lem defn distributional averaging}. An application of the sewing lemma then implies \eqref{eq:young inhomogenous}. Thus, from an analytical perspective it is readily seen that the multiplicative averaged field is well defined. In order to obtain the regularizing effect from $w$, we then need to use the stochastic construction of $\Gamma^wb$ by application of Proposition \ref{prop: lemma 3.4 HairerLi}. Lemma \ref{lem: Lp bounds for mult. avg. Op.} is thus readily extended to the time in-homogeneous case, under the assumption that the classical averaged field $T^w b\in C^\gamma_tC^{1+\eta}_x$. For example, in \cite{galeati2020noiseless} it is shown that $T^w b\in C^\gamma_tC^{1+\eta}_x$ for $b\in L^q([0,T];C^\alpha_x)$ with $q>2$ and $\alpha\in \RR$ under suitable conditions on $w$.  For a more detailed analytical construction of the classical averaged field with time in-homogeneous $b$,  see \cite{galeati2020noiseless}. In a similar spirit, one can then readily apply the modified GRR lemma \ref{lem GRR} in order to obtain almost sure space-time H\"older regularity of $\Gamma^w b$. 

With the time in-homogeneous multiplicative averaged field at hand, one can then go through the same abstract procedure for existence and uniqueness of non-linear young equations as shown in section \ref{sec:Non-linear Young integration and equations} by setting $A_{s,t}(x)=\Gamma^w_{s,t}b_s(x)$ in Theorem \ref{secyoung thm existence young integral} and Theorem \ref{secyoung thm existence and uniquness of YDE}. These theorems can then be used to extend the results in section \ref{sec:young eq with multiplicative noise} to allow for time in-homogeneous diffusion coefficients $b$ with possibly distributional spatial dependence.

\subsection{Including a non-Lipschitz drift term}
So far, we have only considered \eqref{intro general SDE} in the case when $b_1\equiv 0$ and $b_2=b$. However, our results immediately extend to equations with both non trivial drift and diffusion, of the form 
\begin{equation*}
    x_t = x_0+\int_0^t b_1(x_s)\dd s + \int_0^t b_2(x_s)\dd \beta_s + w_t,\qquad x_0\in \RR^d.  
\end{equation*}
Again, by the change of variables $\theta=x-w$, we see that $\theta$ formally solves the equation 
\begin{equation*}
    \theta_t=x_0+\int_0^tb_1(\theta_s+w_s)\dd s+\int_0^tb_2(\theta_s+w_s)\dd \beta_s. 
\end{equation*}
Setting 
\begin{equation*}
A_{s,t}(x):=T^w_{s,t}b_1(x)+\Gamma^w_{s,t}b_2(x),
\end{equation*}
we can interpret the equation in the Young integral sense as 
\begin{equation*}
    \theta_t =x_0+\int_0^t A(\dd s,\theta_s)
\end{equation*}
Under the condition that $A$ is sufficiently regular, existence and uniqueness for the YDE holds by Theorem \ref{secyoung thm existence and uniquness of YDE}. It is therefore enough to require $T^wb_1$ and $\Gamma^w b_2$ to belong to $ C_t^\gamma C^{1+\beta,\lambda}_x$ for suitable $\gamma,\beta,\lambda$. Then the results in  Section \ref{sec:young eq with multiplicative noise} can be extended directly. 

One can in this case also invoke time in-homogeneous drift and diffusion $b_1$ and $b_2$ by following the steps outlined in the previous subsection.  

\subsection{Random initial condition}\label{sec: random initial}
So far we have only considered deterministic initial data $x_0\in \mathbb{R}^d$ (resp. $\theta_0=x_0-w_0\in\mathbb{R}^d$).
However, especially in view of applications to optimal transport and fluid dynamics equations, it is often interesting to allow random initial data for the SDE. This extension can be easily implemented in the framework of Section~\ref{sec: concepts solution}, as we are now going to show.
\begin{defn}\label{defn: solution random initial}
Let $(\Omega,\mathcal{F},\mathbb{P})$ be a probability space on which an fBm $\{\beta_t\}_{t\in [0,T}$ of Hurst parameter $H>1/2$, as well as an independent $\mathbb{R}^d$-valued random variable $\xi$, are defined; consider also a continuous deterministic path $w$ and a distributional field $b$. We say that a process $x$ is a \textit{pathwise solution} to the SDE
\[
\mathd x_t = b(x_t)\mathd \beta_t + \mathd w_t,\quad x_0=\xi
\]
if there exist parameters $\gamma, \eta, \lambda$ satisfying \eqref{eq: hypothesis parameters} such that $\Gamma^w b$ is well defined in the sense of Theorem~\ref{thm: main result sec3} and, setting $\theta=x-w$, $\zeta=\xi-w_0$, it holds
\[
\mathbb{P}\Big(\omega\in \Omega \, :\, \Gamma^w b(\omega)\in C^\gamma_t C^{\eta,\lambda}_x,
\ \ \theta(\omega)= C^\gamma_t,
\ \ \theta(\omega)\in C(\zeta(\omega), \Gamma^w b(\omega))\,
\Big) = 1.
\]
\end{defn}

As a consequence of the theory from Section~\ref{sec:Non-linear Young integration and equations}, we deduce the following result.
\begin{cor}
Let $\beta, b, w, \xi,\zeta$ be as above and such that the  assumptions of Lemma~\ref{lem: conditions path-by-path wellposedness} are satisfied. Then any pathwise solution $x$ to the SDE with initial condition $\xi$, $x=\theta+w$, satisfies
\[
\mathbb{P}\Big( \omega\in \Omega\, :\,  \theta(\omega)_t=\mathcal{I}(\Gamma^w b(\omega))(t,\zeta(\omega))\ \text{ for all } t\in [0,T]\Big) = 1
\]
where $\mathcal{I}$ is the map defined in Corollary~\ref{cor: continuous dependence flow map}, i.e. $\mathcal{I}(\Gamma^w b(\omega))$ is the flow associated to $\Gamma^w b(\omega)$. In particular all the conclusions follow if the assumptions of Theorem~\ref{main thm1} are satisfied.
\end{cor}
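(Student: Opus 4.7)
The plan is elementary given the machinery already developed: combine the path-by-path uniqueness of nonlinear YDEs with the flow representation of Corollary~\ref{cor: continuous dependence flow map}, and simply plug in the random initial datum $\zeta(\omega)$ pointwise.

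First, by hypothesis and Lemma~\ref{lem: conditions path-by-path wellposedness}, there is a set $\Omega_1 \in \mathcal{F}$ of full probability such that for every $\omega \in \Omega_1$ the realization $\Gamma^w b(\omega)$ lies in $C^{\gamma}_t C^{1+\eta,\lambda}_x$ with parameters satisfying~\eqref{eq: hypothesis parameters}. Theorem~\ref{secyoung thm existence and uniquness of YDE} then asserts, pathwise for such $\omega$ and for \emph{every} deterministic $y \in \mathbb{R}^d$, that $C(y, \Gamma^w b(\omega))$ is a singleton; by the very definition of the map $\mathcal{I}$ in Corollary~\ref{cor: continuous dependence flow map}, its unique element is $\mathcal{I}(\Gamma^w b(\omega))(\cdot, y)$. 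On the other hand, Definition~\ref{defn: solution random initial} furnishes another full-probability set $\Omega_2$ on which $\theta(\omega) \in C^{\gamma}_t$ and $\theta(\omega) \in C(\zeta(\omega), \Gamma^w b(\omega))$. On the intersection $\Omega_0 := \Omega_1 \cap \Omega_2$, which still has full probability, we freeze $\omega$ and treat $\zeta(\omega) \in \mathbb{R}^d$ as a deterministic point, so that the singleton property forces
\[
\theta(\omega)_t = \mathcal{I}(\Gamma^w b(\omega))(t, \zeta(\omega)) \qquad \forall\, t \in [0,T],
\]
which is the claimed identity.

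The only subtlety, rather than a true obstacle, is conceptual: uniqueness in $C(y, \Gamma^w b(\omega))$ is proved for a fixed deterministic $y$, so one must verify that it is legitimate to substitute the random point $\zeta(\omega)$ afterwards. This is valid precisely because Lemma~\ref{lem: conditions path-by-path wellposedness} delivers a single exceptional set independent of $y$; hence the independence of $\xi$ and $\beta$ assumed in Definition~\ref{defn: solution random initial} plays no role in this argument. Finally, to deduce the last assertion, note that under the hypotheses of Theorem~\ref{main thm1} we have $T^w b \in C^{\gamma}_t C^{2}_x$ for some $\gamma > 3/2 - H$, and Corollary~\ref{cor: regualrity of multiplcative field} together with Remark~\ref{rem: sufficent regularity requirement} yields $\Gamma^w b \in C^{\gamma'}_t C^{1+\eta,\lambda}_x$ almost surely with admissible $\gamma', \eta, \lambda$; Lemma~\ref{lem: conditions path-by-path wellposedness} therefore applies and the preceding argument carries through verbatim.
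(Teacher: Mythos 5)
Your argument is correct and is precisely the one the paper leaves implicit (the corollary is stated without proof as an immediate consequence): on the intersection of the full-measure set from Lemma~\ref{lem: conditions path-by-path wellposedness}, on which $C(y,\Gamma^w b(\omega))$ is a singleton simultaneously for all $y\in\RR^d$ and equals $\{\mathcal{I}(\Gamma^w b(\omega))(\cdot,y)\}$, and the full-measure set from Definition~\ref{defn: solution random initial}, one freezes $\omega$ and substitutes $y=\zeta(\omega)$. Your observation that the uniformity of the exceptional set in the initial datum is what legitimizes this substitution, and that independence of $\xi$ and $\beta$ is not used, matches the paper's intent exactly.
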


\section{Concluding remarks}\label{sec:concluding}

We have shown that through a suitable perturbation of a continuous but irregular path $w$, the SDE 
\begin{equation}\label{eq: last SDE}
    \dd x_t = b(x_t)\dd \beta_t +\dd w_t,\qquad x_0\in \RR^d
\end{equation}
is well posed and admits a unique solution even for distributional coefficients $b$ in terms of Definition \ref{defn: solution} and \ref{defn: path-by-path wellposedness}, in the case when $\{\beta_t\}_{t\in [0,T]}$ is a fBm with $H\in (\frac{1}{2},1)$.  
This can be seen as a first step in a more general program of proving regularization of multiplicative SDEs through perturbation by irregular/rough paths. The first question one could ask is whether it is possible to less restrictive requirements on $b$ given a certain regularizing path $w$. For example, 
in \cite{Catellier2016}, \cite{galeati2020noiseless} (and partially related \cite{butkovsky2019approximation}), sharper results are obtained for SDEs with additive drift (non multiplicative case) by exploiting Girsanov transform. If $w$ is sampled as an fBm of parameter $\delta$, another possible way to solve the SDE in \eqref{eq: last SDE} (say for $x_0=0$ wlog) would be to check that the process
    \begin{equation*}
        \tilde{w}_t = w_t -\int_0^t b(w_s)\dd \beta_s     \end{equation*}
    is again an fBm of parameter $\delta$ under a new probability law $\mathbb{Q}$; if that's the case, then $w$ itself is a solution to the equation w.r.t. $\tilde{w}$. However, the estimates from Proposition \ref{prop: lemma 3.4 HairerLi} are not enough to establish exponential integrability and thus to check if Novikov holds.
Another possibility to obtain sharper results could be to  apply the recently developed stochastic sewing lemma \cite{le2020stochastic}, in combination with a more direct application of the results obtained by Hairer and Li in \cite{hairer2019averaging}.  Probably in that case, existence and uniqueness in the class of adapted processes is more straightforward. Our results on the other hand have the advantages that: i) uniqueness also holds without adaptability requirements (although a posteriori the unique solution will be adapted); ii) existence and uniqueness of solutions immediately comes with a regular flow (which is quite difficult to establish by means of stochastic techniques); iii) the resulting equation has a pathwise analytical meaning, its randomicity being in the random field $\Gamma^w b$ but not the YDE itself.

A possibly more challenging extension of our results, is to consider the case of multiplicative fBm with $0<H\leq \frac{1}{2}$. As seen through our analysis, such an extension would be highly dependent on showing the relation between the multiplicative averaged field $\Gamma^wb$ with the classical averaged field $T^wb$ when $\Gamma^w$ is driven by a fBm with $H\leq \frac{1}{2}$. In this case,  Proposition \ref{prop: lemma 3.4 HairerLi} breaks down, and  thus a similar statement in the rough case would be needed.  Furthermore, if one can prove that $\Gamma^wb\in C_t^\gamma C^\eta_{x,loc}$ for general distributions $b$, one can not hope for a $\gamma>\frac{1}{2}$, which is required to apply the non-linear Young formalism employed in this article. To this end, one could hope to use techniques developed on nonlinear rough paths (see e.g. \cite{nualart2019nonlinear,coghi2019rough}), but the exact formulation of the equation in this context is not completely clear. 

Observe that for smooth functions $b$ and under the assumption that $H+\delta>1$ (recall that $\delta\in (0,1)$ is the H\"older regularity of $w$) it holds that 
\begin{equation*}
    \Gamma^wb=\Gamma^w(b\ast \delta_0)=b\ast \Gamma^w \delta_0= b\ast \bar{\nu}^w,
\end{equation*}
where $\bar{\nu}^w$ is the reflection of $\nu^w$ formally given by
\begin{equation*}
    \nu^w_{s,t}=\int_s^t \delta_{w_r}\dd \beta_r, 
\end{equation*}
and for $y\in \RR^d$, $\delta_y$ denotes the Dirac delta centered at $y$. It is tempting to think of  $\nu^w$ as being a form of "weighted occupation measure". However, in general $\nu^w$ will NOT be a measure. Anyway, applying the approximation procedure from Section~\ref{sec: avg fields w multiplicative noise}, the above relation is preserved also in the case $H+\delta\leq 1$, once interpreted as random variables: for fixed $b$,
    \[
    \Gamma^w b(\omega) = b\ast \tilde{\nu^w}(\omega) \quad\text{ for } \mathbb{P}\text{-a.e. }\omega\in\Omega.
    \]
    Now on the r.h.s. the random variable appearing does not depend on $b$ anymore, so it can be regarded as a regular version of the family of random variables $\{\Gamma^w b\}_{b\in E}$: once we fix the set $\Omega'\subset\Omega$ on which $\nu^w$ is defined and regular, so are $\Gamma^w b$. In this sense, in many considerations we could also make the full probability set independent of $b$, deriving the regularity of $\Gamma^w b$ from that of $\nu^w$ and Young's convolution inequality, which can then be seen analogously to constructing the classical averaged field as a convolution between a function $b$ and the reflected local time associated to $w$. 

One could also readapt the concept of $\rho$-irregularity (see e.g. \cite{galeati2020prevalence}) in this setting. Indeed at least formally, convolution with $\nu^w$ coincides at the Fourier level to a Fourier multiplier of the form
    \[
    \hat{\nu}^w(\xi) = \int_s^t e^{i\xi\cdot w_r} \mathd\beta_r
    \]
    where for any fixed $\xi$, $\hat{\nu}^w(\xi)$ is a well defined random variable (random path actually, once we apply Kolmogorov) by the Lemma from \cite{hairer2019averaging}. Combining this with the classical $\rho$-irregularity property, one should obtain that if $w$ is $(\gamma,\rho)$-irregular, then for any $\gamma'<\gamma+H-1$, $\rho'<\rho$ it holds
    \[
    \EE \big[\,\Vert \hat\nu^w(\xi)\Vert_{\gamma'}^p\big]^{1/p}\lesssim |\xi|^{-\rho'}
    \]
    One could then ask the more difficult question of whether it's possible to establish that
    \[
    \PP \left( \sup_{\xi\in\RR^d} |\xi|^{\rho'} \Vert \hat\nu^w(\xi)\Vert_{\gamma'} <\infty \right) =1  
    \]
    which would be a true analogue of the $\rho$-irregularity property.

\bibliographystyle{plain}

\end{document}